\begin{document}
\title[On the IBNLS equation]{On the inhomogeneous biharmonic nonlinear Schr\"odinger equation: local, global and stability results}
	\author[C. M. GUZM\'AN ]
	{CARLOS M. GUZM\'AN }  
	
	\address{CARLOS M. GUZM\'AN \hfill\break
		Department of Mathematics, Fluminense Federal University, BRAZIL}
	\email{carlos.guz.j@gmail.com}
	
    \author[A. PASTOR ]
	{ADEMIR PASTOR}  
	
	\address{ADEMIR PASTOR  \hfill\break
	Imecc-Unicamp,	Rua S\'ergio Buarque de Holanda, 651, 13083-859, Campinas--SP, Brazil.}
	\email{apastor@ime.unicamp.br}

\begin{abstract}
We consider the inhomogeneous biharmonic nonlinear Schr\"odinger equation (IBNLS) 
$$
i u_t +\Delta^2 u+\lambda|x|^{-b}|u|^\alpha u = 0, 
$$
where $\lambda=\pm 1$ and $\alpha$, $b>0$. We show local and global well-posedness in $H^s(\mathbb{R}^N)$ in the $H^s$-subcritical case, with $s=0,2$. Moreover, we prove a stability result in $H^2(\mathbb{R}^N)$, in the mass-supercritical and energy-subcritical case. The fundamental tools to prove these results are the standard Strichartz estimates related to the linear problem.
\end{abstract}

\keywords{Inhomogeneous biharmonic nonlinear Schr\"odinger equation; Local well-posedness; Global well-posedness; Stability theory}
	
	\maketitle  
	\numberwithin{equation}{section}
	\newtheorem{theorem}{Theorem}[section]
	\newtheorem{proposition}[theorem]{Proposition}
	\newtheorem{lemma}[theorem]{Lemma}
	\newtheorem{corollary}[theorem]{Corollary}
	\newtheorem{remark}[theorem]{Remark}
	\newtheorem{definition}[theorem]{Definition}

\section{Introduction}
\indent  In this paper, we study the initial value problem (IVP) associated to the inhomogeneous biharmonic nonlinear Schr\"odinger equation (IBNLS for short)
\begin{equation}\label{IBNLS}
\begin{cases}
i\partial_tu +\Delta^2 u + \lambda|x|^{-b} |u|^\alpha u =0,  \;\;\;t\in \mathbb{R} ,\;x\in \mathbb{R}^N,\\
u(0,x)=u_0(x), 
\end{cases}
\end{equation}
where $u = u(t,x)$ is a complex-valued function in space-time  $\mathbb{R}\times\mathbb{R}^N$, $\Delta^2$ stands for the biharmonic (or bilaplacian) operator, $\lambda=\pm 1$ and $\alpha, b>0$ are real numbers. The equation is called ``focusing IBNLS" when $\lambda= -1$ and ``defocusing IBNLS" when $\lambda= 1$. 

The limiting case $b = 0$ (classical biharmonic nonlinear Schr\"odinger equation (BNLS), also called the fourth-order Schr\"odinger equation)  was introduced by Karpman \cite{karpman1996} and Karpman-Shagalov \cite{karpman-Shagalov} to take into account the role of small fourth-order dispersion terms in the propagation of intense laser beams in a bulk medium with a Kerr nonlinearity. Since then, the IVP \eqref{IBNLS} (with $b=0$) has been the subject of
intensive work in recent years.  Let us recall some results:  it is known that \eqref{IBNLS}  is locally well-posed in the energy space $H^2(\mathbb{R}^N)$ in the energy-subcritical case ($0 < \alpha < \frac{8}{N-4}$, if $N \geq 5$ and $0 < \alpha < \infty$ if $1\leq N \leq 4$) and in $L^2(\mathbb{R}^N)$ in the mass-subcritical case $(0 <\alpha < \frac{8}{N})$; for details see  \cite{FIBI1} and \cite{Pausader07}. Moreover, in the defocusing case, Pausader \cite{Pausader07} studied the global well-posedness and scattering in the energy-critical case ($\alpha =\frac{8}{N-4}, \; N\geq 5$) and radially symmetric initial data. He combined the concentration-compactness argument due to Kenig-Merle \cite{KENIG} with some Morawetz-type estimates. Later, Miao-Xu-Zhao \cite{Miao-Xu-Zhao} showed a similar result  removing the radial assumption on the initial data, for $N \geq 9$. In \cite{Pausader-cubica}, Pausader showed the global well-posedness and scattering for the cubic BNLS  ($\alpha=2$) and $5\leq N \leq 8$. Furthermore, Pausader-Xia \cite{Pausader-Xia} treated the global well-posedness and scattering in the mass-supercritical case  ($\alpha >\frac{8}{N}$) and low dimensions $1 \leq N \leq 4$; they used a virial-type estimate instead of the Morawetz estimates. For the focusing case, Pausader \cite{Pausader09} and Miao-Xu-Zhao \cite{Miao-Xu-Zhao09} independently showed the global well-posedness and scattering in  the energy-critical case, assuming radially symmetric initial data with $\dot{H}^2(\mathbb{R}^N)$ and energy norms below that of the ground states. For sufficiently small initial data, Hayashi, Mendez-Navarro and Naumkin \cite{Hayashi-Nakao-Mendez} proved the global existence and the scattering for  $N = 1$ and $\alpha > 4$. They also shown the small data global existence and the decay estimates  under the assumption that the initial data is odd. Finally, we also quote Aoki, Hayashi and Naumkin \cite{Aoki-Kazuki-Hayashi-Nakao}, where the authors showed the global existence and scattering for $N = 1, 2$ and $\alpha >\frac{4}{N}$.

The equation in \eqref{IBNLS} has a counterpart for the Laplacian operator, namely, the inhomogeneous nonlinear Schr\"odinger equation  (INLS)
\begin{equation}\label{INLS}
i\partial_tu +\Delta u + \lambda|x|^{-b} |u|^\alpha u =0.
\end{equation}
In the sense of strong solutions introduced in \cite{CAZENAVEBOOK}, the well-posedness of the IVP associated with \eqref{INLS} was studied in \cite{GENSTU}, where the authors showed local well-posedness in $H^1(\mathbb{R}^N)$ for $0<b<\min\{2,N\}$ and $0<\alpha<\frac{4-2b}{N-2}$ if $N\geq3$; $0<\alpha<\infty$ if $N=1,2$. They also established global well posedness in the mass-subcritical case, that is, $0<\alpha<\frac{4-2b}{N}$. In the mass-critical case, $\alpha=\frac{4-2b}{N}$, Genoud in \cite{GENOUD} showed global well-posedness in $H^1(\mathbb{R}^N)$, provided that the mass of the initial data is below that of the associated ground state. This result was extended in the case $\frac{4-2b}{N}<\alpha<\frac{4-2b}{N-2}$ by Farah in \cite{LG}. Recently, the first author in \cite{CARLOS}, by using the contraction mapping principle combined with Strichartz estimates obtained local well-posedness results for the IVP associated to \eqref{INLS} under some restrictions on the parameters $b$ and $\alpha$; small data global theory was also established. 
Afterwards, scattering, norm concentration in the $L^2$-critical case, orbital stability of ground states and other issues were also studied (see, \cite{Mykael},  \cite{Dinh1},  \cite{paper2}, \cite{paper3}). 

Other works involving INLS model with potential, also were studied, see for instance, \cite{guo2018scattering}, \cite{Cho-Lee}. Related to IBNLS model, Cho-Ozawa-Wang \cite{Cho-Ozawa} considered the inhomogeneous power type $|x|^{-2}|u|^{\frac{4}{N}}u$. They showed the existence of weak solutions by  regularizing the nonlinearity; finite time blow-up of solutions when the energy is negative were also addressed. In some sense, by using the Strichartz estimates, we extend their result to nonlinearities of the form $|x|^{-b}|u|^{\alpha}u$. 

Our primary goal in this manuscript is to establish local and global results for the IVP \eqref{IBNLS} in $H^s(\mathbb{R}^N)$, with $s=0,2$. To this end, we use the contraction mapping argument based on the Strichartz estimates related to the linear problem. As usual, the main idea is to construct a closed subspace of $C\left([-T,T];H^s(\mathbb{R}^N)\right)$  such that the integral operator defined by 
\begin{equation}\label{OPERATOR} 
G(u)(t)=e^{it\Delta^2}u_0+i\lambda \int_0^t e^{i(t-t')\Delta^2}|x|^{-b}|u(t')|^\alpha u(t')dt'
\end{equation}
is a contraction in this subspace.  Here and in what follows, $e^{it\Delta^2}u_0$ denotes the solution to the linear problem associated with \eqref{IBNLS}.

\ Note that the IBNLS equation is invariant under the scaling,
$u_\mu(t,x)=\mu^{\frac{4-b}{\alpha}}u(\mu^4 t,\mu x)$,  $\mu >0$. This means if $u$  is a solution of \eqref{IBNLS}, with initial data $u_0$, so is $u_\mu$ with initial data $u_{\mu,0}=\mu^{\frac{4-b}{\alpha}}u_0(\mu x)$.  A straightforward computation yields
$$
\|u_{0,\mu}\|_{\dot{H}^s}=\mu^{s-\frac{N}{2}+\frac{4-b}{\alpha}}\|u_0\|_{\dot{H}^s},
$$
implying that the scale-invariant Sobolev space is $\dot{H}^{s_c}(\mathbb{R}^N)$, with $s_c=\frac{N}{2}-\frac{4-b}{\alpha}$, the so called \textit{critical Sobolev index}. If $s_c = 0$ (equivalently $\alpha = \frac{8-2b}{N}$) the IVP is known as mass-critical or $L^2$-critical; if $s_c=2$ (equivalently $\alpha =\frac{8-2b}{N-4}$) it is called energy-critical or $\dot{H}^2$-critical; also, if $s_c<0$ (equivalently $0<\alpha<\frac{8-2b}{N}$) it is called mass-subcritical or $L^2$-subcritical and  if $0<s_c<2$ (equivalently $\frac{8-2b}{N}<\alpha<\frac{8-2b}{N-4}$)  the IVP is known as mass-supercritical and energy-subcritical (or intercritical). From the above considerations it is also expected that $b$ must belong to the interval $(0,4)$.

It is well known that (at least formally) the IBNLS equation has the following conserved quantities:
\begin{equation}\label{mass}
Mass\equiv M[u(t)]=\int_{\mathbb{R}^N}|u(t,x)|^2dx=M[u_0]
\end{equation}
and
\begin{equation}\label{energy}
Energy \equiv E[u(t)]=\frac{1}{2}\int_{\mathbb{R}^N}| \Delta u(t,x)|^2dx+\frac{\lambda}{\alpha +2} \int_{\mathbb{R}^N} |x|^{-b}|u|^{\alpha +2}dx=E[u_0].
\end{equation}

\ Our interest in this paper is, in some sense, to extend some of the above mentioned results for the IBNLS model. To this end, we divide our results into three parts. The first part is devoted to study the local theory. We start considering the local well-posedness in $L^2(\mathbb{R}^N)$ and prove the following (for the precise notation see Section \ref{sec2}). 
\begin{theorem}\label{LWPL2}
Let $0< \alpha<\frac{8-2b}{N}$ and $0<b<\min\{4,N\}$, then for all $u_0 \in L^2(\mathbb{R}^N)$ there exist $T=T(\|u_0\|_{L^2},N,\alpha)>0$ and a unique solution $u$ of \eqref{IBNLS} satisfying
$$
u \in C\left([-T,T];L^2(\mathbb{R}^N)\right) \cap L^q\left([-T,T];L^{r}(\mathbb{R}^N)\right), 
$$
for any pair ($q,r$) $B$-admissible. Moreover, the continuous dependence upon the initial data holds.
\end{theorem}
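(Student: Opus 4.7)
My plan is to apply the Banach fixed point theorem to the Duhamel operator $G$ defined in \eqref{OPERATOR} on a suitably chosen closed ball in a Strichartz space. Since we are in the mass-subcritical regime $\alpha<\tfrac{8-2b}{N}$, I would look for a $B$-admissible pair $(q,r)$ (the biharmonic admissibility will be recalled in Section \ref{sec2}) such that the dual Strichartz norm of the inhomogeneous nonlinearity can be controlled by a small positive power of $T$ times a polynomial in the Strichartz norms of $u$. Concretely, I would work on
\[
X_T=\bigl\{ u\in C([-T,T];L^2)\cap L^q([-T,T];L^r): \|u\|_{L^\infty_T L^2}+\|u\|_{L^q_T L^r}\le M\bigr\},
\]
equipped with the natural $L^q_T L^r$-distance, taking $M\sim\|u_0\|_{L^2}$ and $T$ small, and prove that $G$ is a contraction from $X_T$ into itself.

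The linear part $\|e^{it\Delta^2}u_0\|_{L^\infty_T L^2\cap L^q_T L^r}\lesssim\|u_0\|_{L^2}$ is immediate from the Strichartz inequality. The heart of the matter is estimating
\[
\Bigl\|\int_0^t e^{i(t-t')\Delta^2}|x|^{-b}|u|^\alpha u\,dt'\Bigr\|_{L^\infty_T L^2\cap L^q_T L^r}
\lesssim \bigl\||x|^{-b}|u|^\alpha u\bigr\|_{L^{\tilde q'}_T L^{\tilde r'}}
\]
for a second $B$-admissible pair $(\tilde q,\tilde r)$. To bound the right-hand side I would split space as $\mathbb{R}^N=B\cup B^c$ with $B=\{|x|\le 1\}$, and on each piece apply Hölder with Lebesgue exponents $\gamma_1,\gamma_2$ satisfying $\gamma_1<N/b<\gamma_2$, so that $|x|^{-b}\in L^{\gamma_1}(B)\cap L^{\gamma_2}(B^c)$. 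The residual factor $\|u\|_{L^{\alpha+1}\,\cdot}^{\alpha+1}$ is then placed in Lebesgue spaces that interpolate between the endpoints of the chosen admissible pair; a final Hölder in time produces a gain $T^\theta$ with $\theta>0$ precisely because $\alpha<\tfrac{8-2b}{N}$, i.e.\ $s_c<0$.

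With that estimate, $\|G(u)\|_{X_T}\le c\|u_0\|_{L^2}+cT^\theta M^{\alpha+1}$, and choosing $M=2c\|u_0\|_{L^2}$ and $T$ so small that $cT^\theta M^\alpha\le \tfrac14$ shows $G$ maps $X_T$ into itself. The same Hölder–Strichartz computation applied to $|u|^\alpha u-|v|^\alpha v$, using the pointwise bound $\bigl||u|^\alpha u-|v|^\alpha v\bigr|\lesssim(|u|^\alpha+|v|^\alpha)|u-v|$, yields the contraction property on the $L^q_T L^r$ metric. Uniqueness in the full Strichartz class follows from the same argument (absorbing the nonlinear term into the left-hand side after possibly shrinking $T$), while continuous dependence is a standard consequence of the contraction estimate.

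The main obstacle will be the bookkeeping in the Hölder step: I need to choose $(q,r)$, $(\tilde q,\tilde r)$ and the splitting exponents $\gamma_1,\gamma_2$ so simultaneously (i) $|x|^{-b}$ is locally and globally integrable in the respective $L^{\gamma_i}$ norms, (ii) the residual exponents land on admissible Lebesgue indices allowed by Sobolev embedding from $L^2$-based Strichartz spaces, and (iii) the time exponent is strictly positive, which is where the mass-subcritical assumption is used. Provided $0<b<\min\{4,N\}$, there is enough room in the constraints to carry this out, and the condition $b<4$ (implicit in $b<\min\{4,N\}$) is what ensures the admissibility range is non-empty.
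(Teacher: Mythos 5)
Your proposal follows the same route as the paper's proof: a contraction mapping on a Strichartz ball, with the spatial decomposition $\mathbb{R}^N=B\cup B^c$ and a Hölder step that places $|x|^{-b}$ in a Lebesgue space and extracts a positive power $T^\theta$ from the mass-subcritical restriction $\alpha<\tfrac{8-2b}{N}$. The only cosmetic difference is that on $B^c$ the paper simply uses $|x|^{-b}\le 1$ (i.e.\ the $L^\infty(B^c)$ bound, avoiding a second exponent $\gamma_2$), but your version with $\gamma_2>N/b$ is equivalent, and your reading of the role of $b<4$ is essentially right (it makes the $\alpha$-interval nonempty).
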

 
\ Next, we deal with the local well-posedness in $H^2(\mathbb{R}^N)$. Before stating the theorem, we define the following number
\begin{equation}\label{def4*}
 4^*:=
 \begin{cases}
 \frac{8-2b}{N-4},\;\;\mbox{if}\;\;N\geq 5,\\
 +\infty,\;\;\mbox{if}\;\;1\leq N\leq 4.
 \end{cases}
\end{equation}
Note that in dimensions $N\geq5$, $\alpha=4^*$ is nothing but the index for which the IVP \eqref{IBNLS} is energy-critical.

\begin{theorem}\label{LWPH2}
Assume $N\geq3$, $0<b<\min\left\{\frac{N}{2},4\right\}$, and $\max\left\{0,\frac{2(1-b)}{N}\right\}<\alpha<4^*$. If $u_0 \in H^2(\mathbb{R}^N)$, then there exists $T=T(\|u_0\|_{H^2},N,\alpha,b)$ and a unique solution of \eqref{IBNLS} satisfying
$$
u \in C\left([-T,T];H^2(\mathbb{R}^N) \right) \cap L^q\left([-T,T];H^{2,r}(\mathbb{R}^N)    \right),
$$
where ($q,r$) is any $B$-admissible pair. Moreover, the continuous dependence upon the initial data holds.
\end{theorem}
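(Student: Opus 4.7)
The plan is to apply the contraction mapping principle to the Duhamel operator $G$ defined in \eqref{OPERATOR}, on a closed ball of a Strichartz-type space. For $T,M>0$ to be chosen, set
\begin{equation*}
X_T:=C\left([-T,T];H^2(\mathbb{R}^N)\right)\cap L^q\left([-T,T];H^{2,r}(\mathbb{R}^N)\right),
\end{equation*}
with $(q,r)$ a conveniently chosen $B$-admissible pair, and let $S_T(M)$ be the closed ball of radius $M$ in $X_T$. I would prove that, for $M\sim\|u_0\|_{H^2}$ and $T$ sufficiently small, $G$ maps $S_T(M)$ into itself and is a strict contraction. The unique fixed point is then the solution, and continuous dependence on the initial data follows by running the same estimates on the difference of two solutions with nearby data.

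By the inhomogeneous Strichartz estimate for the biharmonic group $e^{it\Delta^2}$, it suffices to establish a nonlinear bound of the form
\begin{equation*}
\bigl\||x|^{-b}|u|^\alpha u\bigr\|_{L^{\tilde q'}_T H^{2,\tilde r'}_x}\le CT^{\theta}\|u\|_{X_T}^{\alpha+1},
\end{equation*}
for a second $B$-admissible pair $(\tilde q,\tilde r)$ and some $\theta>0$, with the factor $T^{\theta}$ arising from an extra Hölder step in time. The difference $G(u)-G(v)$ is treated in parallel using the pointwise inequality $\bigl||u|^\alpha u-|v|^\alpha v\bigr|\lesssim\bigl(|u|^\alpha+|v|^\alpha\bigr)|u-v|$.

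The core of the argument, and the main expected obstacle, is controlling two spatial derivatives of the nonlinearity in the presence of the singular weight. Using the product rule,
\begin{equation*}
\Delta\bigl(|x|^{-b}|u|^\alpha u\bigr)=c_1\,|x|^{-b-2}|u|^\alpha u+c_2\,|x|^{-b-2}x\cdot\nabla\bigl(|u|^\alpha u\bigr)+|x|^{-b}\Delta\bigl(|u|^\alpha u\bigr),
\end{equation*}
so the most singular contribution behaves like $|x|^{-b-2}|u|^{\alpha+1}$. I would split $\mathbb{R}^N=B\cup B^c$, with $B$ the unit ball, and apply Hölder's inequality separately on each region, placing the weight in a Lebesgue space that exploits its local and global integrability. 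On $B$, the hypothesis $b<N/2$ together with the lower bound $\alpha>2(1-b)/N$ should be what guarantees that the Sobolev embeddings $H^{2,r}\hookrightarrow L^p$ and $H^{2,r}\hookrightarrow W^{1,p}$ supply admissible Hölder exponents; on $B^c$, the weight is bounded and the upper bound $\alpha<4^\ast$ (energy-subcriticality) ensures that the time exponent in the final Hölder step is strictly subcritical, producing the necessary positive power of $T$.

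Combining these nonlinear bounds with the linear Strichartz estimate yields $\|G(u)\|_{X_T}\le \|u_0\|_{H^2}+CT^{\theta}M^{\alpha+1}$ and a matching contraction estimate; choosing $M=2C\|u_0\|_{H^2}$ and then $T$ small enough closes the fixed-point argument. The delicate verification that all the Hölder and Sobolev exponents can be chosen simultaneously within the admissible range for every $N\ge 3$, $0<b<\min\{N/2,4\}$, and $\max\{0,2(1-b)/N\}<\alpha<4^\ast$ is where I expect to have to work hardest, particularly when balancing the contribution of the most singular term $|x|^{-b-2}|u|^{\alpha+1}$ near the origin.
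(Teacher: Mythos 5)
There is a genuine gap in your approach, and it is precisely the point that you flag as "where I expect to have to work hardest": your plan to control $\Delta\bigl(|x|^{-b}|u|^\alpha u\bigr)$ in a dual Strichartz norm cannot cover the full parameter range of the theorem, because the most singular term $|x|^{-b-2}|u|^{\alpha+1}$ is too singular near the origin. For example, take $N=3$ and $b$ close to $\tfrac{3}{2}$ (the theorem allows any $0<b<\tfrac{3}{2}$). Even putting every power of $u$ in $L^\infty$ (which $H^2(\mathbb{R}^3)\hookrightarrow L^\infty$ permits), you are left needing $|x|^{-(b+2)}\in L^{\tilde r'}(B)$ for some dual Strichartz exponent $\tilde r'\ge 1$, i.e.\ $(b+2)\tilde r'<3$. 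When $b\ge 1$ this is impossible even for $\tilde r'=1$, since $|x|^{-3}\notin L^1(B)$ in $\mathbb{R}^3$. No Hölder or Sobolev juggling can rescue this, because the obstruction is purely the local integrability of the weight. The same problem appears in all low dimensions when $b$ is near its upper endpoint $N/2$.

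What the paper does instead (Proposition~\ref{estimativanaolinear}~(ii), inequality \eqref{EstimativaImportante}) is exploit the extra smoothing of the fourth-order group: the inhomogeneous biharmonic Strichartz estimate
\begin{equation*}
\left\|\Delta u\right\|_{L^{q}_{I}L_x^{r}} \lesssim \left\| \Delta u(t_0)\right\|_{L^{2}}+\left\|\nabla F\right\|_{L^{2}_{I}L_x^{\frac{2N}{N+2}}}
\end{equation*}
gains one derivative, so one only needs to estimate $\nabla F$, whose worst term is $|x|^{-b-1}|u|^{\alpha+1}$. That weight lies in $L^d(B)$ whenever $N/d>b+1$, which is compatible with $b<N/2$. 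The fixed-point argument is then run on the norm $\|u\|_T=\|u\|_{B(L^2;[-T,T])}+\|\Delta u\|_{B(L^2;[-T,T])}$, with the zeroth-order Strichartz estimate controlling $\|G(u)\|_{B(L^2)}$ via Lemma~\ref{LLH210} and the derivative-gaining estimate controlling $\|\Delta G(u)\|_{B(L^2)}$ via Lemma~\ref{LLH21}. Your broad contraction scheme is fine and matches the paper's; what is missing is the recognition that, for the biharmonic propagator, you should never differentiate the nonlinearity twice.
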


As we already said, the proofs of Theorems \ref{LWPL2} and \ref{LWPH2} rely on the contraction mapping principle combined with the Strichartz estimates. In view of the singular factor $|x|^{-b}$ in the nonlinearity, in order to obtain the nonlinear estimates, we frequently need to divide  them inside and outside the unit ball (see Section \ref{Sec3} for details). This in turn brings some extra difficulty because we need to play with the admissible pairs along each estimate (see also \cite{Stochastic}). 

If $b<1$, then we have a lower bound for the parameter $\alpha$ in Theorem \ref{LWPH2}. This bound appears in the estimates outside the unit ball (see proof of Lemma \ref{LLH21}). On the other hand, if $b\geq1$ we then have the following.

\begin{corollary}\label{LWPH21}
	Assume $N\geq3$, $1\leq b<\min\left\{\frac{N}{2},4\right\}$, and $0<\alpha<4^*$. If $u_0 \in H^2(\mathbb{R}^N)$, then there exists $T=T(\|u_0\|_{H^2},N,\alpha,b)$ and a unique solution of \eqref{IBNLS} satisfying
	$$
	u \in C\left([-T,T];H^2(\mathbb{R}^N) \right) \cap L^q\left([-T,T];H^{2,r}(\mathbb{R}^N)    \right),
	$$
	where ($q,r$) is any $B$-admissible pair. Moreover, the continuous dependence upon the initial data holds.
\end{corollary}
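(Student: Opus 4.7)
The plan is to obtain this statement as a direct corollary of Theorem \ref{LWPH2}, by simply observing that the lower bound on $\alpha$ appearing there collapses to $0$ in the regime $b\geq 1$.

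More precisely, once $b\geq 1$ one has $\frac{2(1-b)}{N}\leq 0$, and therefore $\max\{0,\frac{2(1-b)}{N}\}=0$. Hence the condition $0<\alpha<4^*$ imposed in the corollary coincides with the condition $\max\{0,\frac{2(1-b)}{N}\}<\alpha<4^*$ appearing in Theorem \ref{LWPH2}. The remaining hypotheses $N\geq 3$ and $b<\min\{N/2,4\}$ are identical in both statements, so no new admissible pair needs to be chosen and no new nonlinear estimate has to be proved. Consequently, the existence of $T=T(\|u_0\|_{H^2},N,\alpha,b)$, the mixed-norm regularity of the fixed point of the operator $G$ defined in \eqref{OPERATOR}, the uniqueness, and the continuous dependence on $u_0$ all follow by invoking Theorem \ref{LWPH2} verbatim.

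Conceptually, the lower bound $\frac{2(1-b)}{N}$ on $\alpha$ in Theorem \ref{LWPH2} is forced, via Lemma \ref{LLH21}, by the control of the nonlinearity on the region $|x|\geq 1$ where the weight $|x|^{-b}$ is merely bounded and not fast-decaying when $b<1$; in that case one has to trade some integrability of $|u|^{\alpha}$ for the missing decay, which is what produces the restriction $\alpha>\frac{2(1-b)}{N}$. Once $b\geq 1$, the weight $|x|^{-b}$ decays integrably enough at infinity for the exterior piece to be absorbed directly, and no compensating lower bound on $\alpha$ is required. So there is no real obstacle to overcome: the only subtle point is recognizing that the $\max$ in Theorem \ref{LWPH2} has been written precisely to make this a one-line deduction, and the corollary is genuinely a corollary rather than a separate theorem.
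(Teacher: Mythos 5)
Your proof is correct and matches the paper's intent exactly: the corollary is a one-line consequence of Theorem \ref{LWPH2}, since $b\geq 1$ forces $\frac{2(1-b)}{N}\leq 0$ and hence $\max\left\{0,\frac{2(1-b)}{N}\right\}=0$, making the hypotheses identical. Your explanation of why the lower bound on $\alpha$ arises from the exterior estimate in Lemma \ref{LLH21} also agrees with the paper's remark preceding the corollary.
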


\ In the second part of the paper, we consider the global well-posedness of  \eqref{IBNLS}. We begin with a global result in $L^2(\mathbb{R}^N)$, which is an immediate consequence of Theorem \ref{LWPL2} and the conservation of the mass.  

\begin{theorem}\label{GWPL2}
If $0< \alpha<\frac{8-2b}{N}$ and $0<b<\min\{4,N\}$, then for all $u_0 \in L^2(\mathbb{R}^N)$ the local solution $u$ of  \eqref{IBNLS} obtained in Theorem \ref{LWPL2} extends globally-in-time with
$$
u \in C\left(\mathbb{R};L^2(\mathbb{R}^N)\right) \cap L^q_{loc}\left(\mathbb{R};L^{r}(\mathbb{R}^N)\right), 
$$
for any  $B$-admissible pair ($q,r$). 
\end{theorem}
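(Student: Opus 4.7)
The plan is to iterate the local well-posedness result (Theorem \ref{LWPL2}) combined with the mass conservation law \eqref{mass}. The decisive feature is that the existence time in Theorem \ref{LWPL2} is of the form $T=T(\|u_0\|_{L^2},N,\alpha)$, depending on the initial datum \emph{only} through its $L^2$-norm.

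First I would verify the mass conservation \eqref{mass} at the $L^2$ level. Since Theorem \ref{LWPL2} only produces solutions in $C([-T,T];L^2)$, the formal computation that establishes \eqref{mass} (multiplying the equation by $\bar u$ and integrating) is not directly meaningful. The standard workaround is approximation: pick $u_{0,n}\in H^2(\mathbb{R}^N)$ with $u_{0,n}\to u_0$ in $L^2$; by Theorem \ref{LWPH2} (or Corollary \ref{LWPH21} when $b\geq 1$) these produce $H^2$-solutions $u_n$ on a common time interval for which mass conservation is justified; then the continuous dependence from Theorem \ref{LWPL2} guarantees $u_n\to u$ in $C([-T,T];L^2)$, so $\|u(t)\|_{L^2}=\lim\|u_n(t)\|_{L^2}=\lim\|u_{0,n}\|_{L^2}=\|u_0\|_{L^2}$.

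Second, with \eqref{mass} in hand, I would run a continuation argument. Let $T^*>0$ be the maximal forward existence time; set $M_0:=\|u_0\|_{L^2}$ and $T_0:=T(M_0,N,\alpha)$. If $T^*<\infty$, pick $t_0\in(T^*-T_0/2,T^*)$ and apply Theorem \ref{LWPL2} with the new initial datum $u(t_0)$, whose $L^2$-norm equals $M_0$ by conservation. This yields a solution on $[t_0-T_0,t_0+T_0]$, extending $u$ past $T^*$, a contradiction. Hence $T^*=+\infty$, and the analogous argument gives global existence for negative times. The space-time Strichartz integrability on compact subintervals is then inherited from Theorem \ref{LWPL2} applied on each finite window.

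The main (and really only) technical obstacle is the justification of mass conservation at the bare $L^2$-regularity level, since the $H^2$-solutions of Theorems \ref{LWPH2}/\ref{LWPH21} require $\alpha>\max\{0,2(1-b)/N\}$, which is compatible with the mass-subcritical range here; otherwise one may instead mollify both $u_0$ and the nonlinearity, pass to the limit via Strichartz and continuous dependence, and arrive at the same conclusion. All remaining steps are an essentially mechanical iteration of the local theory.
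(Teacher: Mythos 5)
Your proof takes essentially the same route as the paper: iterate the local theory using the fact that the existence time in Theorem \ref{LWPL2} depends only on the $L^2$-norm of the data, together with conservation of mass. The paper's own proof simply invokes \eqref{mass} without commenting on its validity at bare $L^2$-regularity, so your extra step of justifying it by $H^2$-approximation (or mollification of the nonlinearity) is a welcome refinement; note only that the $H^2$-approximation route quietly relies on persistence of regularity, i.e.\ that the $H^2$-solutions $u_n$ actually extend to the common $L^2$-local-existence interval $[-T,T]$ — this is true in the mass-subcritical regime but deserves a sentence, and your mollification alternative sidesteps it cleanly.
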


\ In the sequel we establish some global results in $H^2(\mathbb{R}^N)$. The first result concerns the global existence in the $L^2$-subcritical and $L^2$-critical regimes.

\begin{proposition}\label{glosub}
Assume $N\geq3$ and and $0<b<\min\left\{\frac{N}{2},4\right\}$.
Then the local solution obtained in Theorem \ref{LWPH2} can be extended globally-in-time if one of the following alternatives holds:
\begin{itemize}
	\item[(i)] $\max\left\{0,\frac{2(1-b)}{N}\right\}<\alpha<\frac{8-2b}{N}$; or
	\item[(ii)] $\alpha=\frac{8-2b}{N}$ and $\|u_0\|_{L^2}$ sufficiently small.
\end{itemize}
\end{proposition}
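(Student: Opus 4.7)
The plan is to establish a uniform-in-time a priori bound on $\|u(t)\|_{H^2(\mathbb{R}^N)}$; combined with the blow-up alternative implicit in the local theory of Theorem \ref{LWPH2}, this yields global existence. Mass conservation \eqref{mass} already controls $\|u(t)\|_{L^2}$, so the task reduces to bounding $\|\Delta u(t)\|_{L^2}$ uniformly on any finite interval, which I would do through the energy \eqref{energy}.

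The central auxiliary tool is an inhomogeneous biharmonic Gagliardo-Nirenberg inequality of the form
\begin{equation*}
\int_{\mathbb{R}^N} |x|^{-b}|u|^{\alpha+2}\,dx \;\leq\; C\,\|\Delta u\|_{L^2}^{\theta}\,\|u\|_{L^2}^{\alpha+2-\theta}, \qquad \theta=\frac{N\alpha+2b}{4},
\end{equation*}
with the exponent $\theta$ forced by scaling invariance. I would prove it by splitting the integral into the regions $|x|\le 1$ and $|x|>1$: in the outer region the weight is bounded by $1$ and a standard biharmonic Gagliardo-Nirenberg inequality suffices, while in the inner region one combines H\"older's inequality with a Hardy-type estimate for the singular factor and the Sobolev embedding $H^2\hookrightarrow L^p$. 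The hypothesis $0<b<\min\{N/2,4\}$ together with $\alpha<4^*$ ensures that the required H\"older/Sobolev exponents lie in the admissible range.

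Granted this inequality, the proof splits into cases. In the defocusing case $\lambda=1$, both terms in the energy are nonnegative, so $\|\Delta u(t)\|_{L^2}^2\leq 2E[u_0]$ trivially. In the focusing case $\lambda=-1$, energy conservation combined with the weighted Gagliardo-Nirenberg bound and mass conservation gives
\begin{equation*}
\frac{1}{2}\|\Delta u(t)\|_{L^2}^2 \;\leq\; E[u_0] + \frac{C}{\alpha+2}\,\|\Delta u(t)\|_{L^2}^{\theta}\,\|u_0\|_{L^2}^{\alpha+2-\theta}.
\end{equation*}
In case (i), the mass-subcritical hypothesis $\alpha<\frac{8-2b}{N}$ gives $\theta<2$, so Young's inequality absorbs the $\|\Delta u\|^{\theta}$ term into the left-hand side, producing an a priori bound depending only on $M[u_0]$ and $E[u_0]$. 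In case (ii), mass-criticality forces $\theta=2$, and the coefficient of $\|\Delta u(t)\|_{L^2}^2$ on the right becomes $\frac{C}{\alpha+2}\|u_0\|_{L^2}^{\alpha}$; choosing $\|u_0\|_{L^2}$ small enough that this coefficient is smaller than $\tfrac{1}{4}$ leaves $\|\Delta u(t)\|_{L^2}^2\leq 4E[u_0]$, uniformly in time.

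The main obstacle is the proof of the weighted Gagliardo-Nirenberg inequality across the full parameter range, in particular checking that the H\"older exponents needed to control the integral near the singularity remain inside the admissible Sobolev window determined by the constraints on $b$ and $\alpha$. Once that inequality is in place, the extension to a global-in-time solution is a routine bootstrap from the local theory of Theorem \ref{LWPH2}.
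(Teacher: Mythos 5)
Your proposal is correct and follows essentially the same route as the paper: the key tool is the weighted Gagliardo--Nirenberg inequality $\int |x|^{-b}|u|^{\alpha+2}\,dx \lesssim \|\Delta u\|_{L^2}^{\frac{N\alpha+2b}{4}}\|u\|_{L^2}^{\alpha+2-\frac{N\alpha+2b}{4}}$ (stated in the paper as Lemma \ref{k3lem}, quoted from the literature rather than reproved), combined with conservation of mass and energy and the observation that the exponent $\frac{N\alpha+2b}{4}$ is $<2$ in case (i) and $=2$ in case (ii). Your extra explicit invocation of Young's inequality in case (i) and the splitting into defocusing/focusing are just slightly more detailed phrasings of the same argument.
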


The proof of Proposition \ref{glosub} is an immediate consequence of the conservation of the energy and the embedding $H^2(\mathbb{R}^N)\hookrightarrow L^{\alpha+2}(|x|^{-b}dx)$. The restriction on $\alpha$ in (i) comes, of course, from the local well-posedness; once one obtains the local well-posedness for $\alpha>0$ (with $0<b<1$), then global well-posedness holds for any $\alpha>0$. 

As already commented, in \cite{GENOUD}, the author proved a similar result as in (ii) for the $L^2$-critical INLS. More precisely, he proved if $\|u_0\|_{L^2}< \|Q\|_{L^2}$, where $Q$ is the ground state solution associated with \eqref{INLS} then the solution is global in $H^1(\mathbb{R}^N)$. We believe a similar result also holds in our case; this is currently under investigation.

 Next, concerning the intercritical case we establish the following.

\begin{theorem}\label{GWPH2}
Assume one of the following conditions:
\begin{itemize}
\item[(i)] $N\geq8$, $0<b<4$, and $\frac{8-2b}{N}<\alpha<4^*$;
\item[(ii)] $N=5,6,7$, $\frac{8-2b}{N}<\alpha<\frac{N-2b}{N-4}$ and $0<b<\frac{N^2-8N+32}{8}$;
\item[(iii)] $N=6,7$, $0<b<N-4$, and	$\frac{8-2b}{N}<\alpha<4^*$;
\item[(iv)] $N=3,4$, $0<b<\frac{N}{2}$, and $\frac{8-2b}{N}<\alpha<\infty$.
\end{itemize}	
Suppose $u_0 \in H^2(\mathbb{R}^N)$ satisfies $\|u_0\|_{H^2}\leq \eta$, for some $\eta>0$. Then there exists $\delta=\delta(\eta)>0$ such that if $\|e^{it\Delta^2}u_0\|_{B(\dot{H}^{s_c})}<\delta$, then  there exists a unique global solution $u$ of \eqref{IBNLS} such that
\begin{equation*}\label{NGWP3}
\|u\|_{B(\dot{H}^{s_c})}\leq  2\|e^{it\Delta^2}u_0\|_{B(\dot{H}^{s_c})} 
\end{equation*}
and
\begin{equation*}\label{NGWP4}
\|u\|_{B\left(L^2\right)}+\|\Delta  u\|_{B\left(L^2\right)}\leq 2c\|u_0\|_{H^2},
\end{equation*}
for some universal constant $c>0$.
\end{theorem}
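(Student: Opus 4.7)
The plan is to prove Theorem \ref{GWPH2} via the standard fixed-point argument applied to the Duhamel operator $G$ in \eqref{OPERATOR}, this time on the whole real line rather than on $[-T,T]$. The global reach is achieved by running the contraction in Strichartz spaces that include the scaling-critical norm $B(\dot{H}^{s_c})$, whose smallness (ensured by the hypothesis on $\|e^{it\Delta^2}u_0\|_{B(\dot{H}^{s_c})}$) provides the gain needed to close the nonlinear estimates globally in time.

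More precisely, I would work in the complete metric space
\[
X = \Bigl\{u:\;\|u\|_{B(\dot{H}^{s_c})}\leq 2\|e^{it\Delta^2}u_0\|_{B(\dot{H}^{s_c})},\;\; \|u\|_{B(L^2)}+\|\Delta u\|_{B(L^2)}\leq 2c\|u_0\|_{H^2}\Bigr\},
\]
with distance $d(u,v)=\|u-v\|_{B(L^2)}+\|u-v\|_{B(\dot{H}^{s_c})}$. Applying the Strichartz estimates to the linear piece produces the factors $c\|u_0\|_{H^2}$ and $\|e^{it\Delta^2}u_0\|_{B(\dot{H}^{s_c})}$ on the two sides of $X$. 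The entire argument then reduces to establishing a nonlinear inequality of the schematic form
\[
\bigl\|\,|x|^{-b}|u|^\alpha u\,\bigr\|_{\text{dual Strichartz}} \lesssim \|u\|_{B(\dot{H}^{s_c})}^{\alpha}\bigl(\|u\|_{B(L^2)}+\|\Delta u\|_{B(L^2)}\bigr),
\]
together with its difference analog. The small factor $\|u\|_{B(\dot{H}^{s_c})}^\alpha\leq (2\delta)^\alpha$ is what one eventually uses, by choosing $\eta$ first and then $\delta=\delta(\eta)$ sufficiently small, to make $G$ a $\tfrac12$-contraction on $X$.

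To obtain the nonlinear estimate, exactly as in the local theory discussed after Corollary \ref{LWPH21}, I would split the spatial domain into $A=\{|x|\leq 1\}$ and $A^c=\{|x|>1\}$ and treat each piece with its own $B$-admissible pair. On $A$ the weight $|x|^{-b}$ is singular but integrable in $L^\gamma(A)$ for $\gamma<N/b$; on $A^c$ the weight is bounded by $1$, so only Sobolev embeddings are needed there. Combining H\"older's inequality with the embeddings $\dot{H}^{s_c}\hookrightarrow L^{p_c}$ and $H^2\hookrightarrow L^{p_2}$ for suitably chosen $p_c$, $p_2$, one distributes $\alpha$ factors of $u$ onto the critical norm and keeps one remaining factor tied to the $H^2$ scale; the derivative $\Delta$ landing on the Duhamel term is controlled through the $H^{2,r}$-Strichartz spaces.

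The technical heart of the argument, and the main obstacle, is purely combinatorial: inside each of the four regimes (i)--(iv) one has to exhibit $B$-admissible pairs whose Lebesgue exponents fit inside the Sobolev embedding range for both $\dot{H}^{s_c}$ and $H^2$, and whose Strichartz scaling balances correctly so that exactly $\alpha$ powers of $u$ pair with the critical norm and one extra power pairs with the energy norm, on both $A$ and $A^c$. The specific upper bounds on $\alpha$ and $b$ for each dimension range in (i)--(iv) are precisely what makes such a choice possible. Once the nonlinear estimate and its difference version are in hand, the Banach fixed point theorem yields a unique $u\in X$, which is the global-in-time solution satisfying the stated bounds; uniqueness in $X$ and continuous dependence then follow in the standard way.
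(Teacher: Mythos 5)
Your high-level framework is right and matches the paper: run a fixed-point argument in the set $X$ bounded simultaneously in $B(\dot{H}^{s_c})$ and $B(L^2)$-type norms, split the weight into $B$ and $B^C$, and use the smallness of the critical norm $\|e^{it\Delta^2}u_0\|_{B(\dot{H}^{s_c})}$ to close the contraction. But the schematic nonlinear estimate you propose,
\[
\bigl\|\,|x|^{-b}|u|^\alpha u\,\bigr\|_{\text{dual Strichartz}} \lesssim \|u\|_{B(\dot{H}^{s_c})}^{\alpha}\bigl(\|u\|_{B(L^2)}+\|\Delta u\|_{B(L^2)}\bigr),
\]
is false, and this is where the real work of the proof lies. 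If you place all $\alpha$ powers of $u$ in the scale-invariant norm $B(\dot{H}^{s_c})$ and the one remaining factor in a $B(L^2)$-type norm, then the H\"older bookkeeping forces the exponent $\gamma$ of the weight $|x|^{-b}$ to sit \emph{exactly} on the borderline $\frac{N}{\gamma}=b$, so that $|x|^{-b}\notin L^\gamma(B)$ \emph{and} $|x|^{-b}\notin L^\gamma(B^C)$. The estimate degenerates on both regions simultaneously, and no clever choice of admissible pairs removes the degeneracy because the critical Strichartz norm has exactly the scaling of the equation, leaving no slack for the weight.

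The paper's fix (Lemmas \ref{lemmaglobal1}--\ref{lemmaglobal3}) is to perform a \emph{$\theta$-split}: write $|u|^\alpha = |u|^{\theta}|u|^{\alpha-\theta}$ for a small $\theta>0$ and put the $\theta$ powers in $L^\infty_t H^2_x$ (via Sobolev embedding), not in the critical Strichartz norm. This gives an estimate of the form
\[
\bigl\|\chi_A |x|^{-b}|u|^\alpha v\bigr\|_{\text{dual Strichartz}} \lesssim \|u\|^{\theta}_{L^\infty_tH^2_x}\,\|u\|^{\alpha-\theta}_{B(\dot{H}^{s_c})}\,\|v\|_{\text{Strichartz}},
\]
and the small $\theta$ moves $\frac{N}{\gamma}-b$ strictly positive on $B$ (choosing $\theta r_1=\tfrac{2N}{N-4}$) and strictly negative on $B^C$ (choosing $\theta r_1=2$); see \eqref{LG1Hs3}. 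This is also the reason the theorem's $\delta$ depends on $\eta$: in the contraction one uses $\|u\|^\theta_{L^\infty_t H^2_x}\lesssim\eta^\theta$, whereas your estimate would give a $\delta$ independent of $\eta$, contradicting the statement. Finally, your deferral of the casework to generic ``combinatorics'' glosses over genuine obstructions: in dimensions $5\le N\le 7$ the natural choice of exponents fails ($r<N/2$ is violated), and the paper needs the further splittings of Lemmas \ref{lemmaparcial} and \ref{lemmaglobal4} (with the extra parameter $\sigma$) and, for $N=3,4$, the mixed interpolation of Lemma \ref{lemmaglobal3}. Those arguments are not merely choosing the right exponents inside a fixed pattern; they involve structurally different decompositions of the nonlinearity.
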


A few words of explanation concerning Theorem \ref{GWPH2} are in order. Its proof also relies on the contraction mapping principle. The main difficulty again is to establish the nonlinear estimates. In higher dimensions, that is, $N\geq8$ we obtain the best possible result, in the sense that $\alpha$ and $b$ range in the largest possible intervals.  Although we believe this result is also true  in other dimensions, we are unable to prove it. Note that in (ii) we need the stronger assumption $\alpha <\frac{N-2b}{N-4}$ instead of $\alpha <\frac{8-2b}{N-4}$; the assumption $0<b<\frac{N^2-8N+32}{8}$ then appears in order to have  $\frac{8-2b}{N}<\frac{N-2b}{N-4}$.  On the other hand, at least in dimension $N=6,7$, if we insist with the assumption $\alpha <\frac{8-2b}{N-4}$ then we need to impose $b<N-4$ (see also Remark \ref{remn=5} below). Finally, if $u_0\in H^2(\mathbb{R}^N)$ is such that $\|u_0\|_{\dot{H}^{s_c}}\leq\frac{\delta}{c}$, where $c$ is the constant appearing in inequality \eqref{SE2}, then we automatically have
$\|e^{it\Delta^2}u_0\|_{B(\dot{H}^{s_c})}<\delta$. In particular, if $\|u_0\|_{H^2}$ is sufficiently small, the embedding $H^2(\mathbb{R}^N)\hookrightarrow H^{s_c}(\mathbb{R}^N)$ (recall we are in the case $0<s_c<2$) gives that $\|u_0\|_{H^{s_c}}$ is also sufficiently small; hence, we deduce the existence of a global solution  if one of the conditions (i)-(iv) hold.

Once global results are established, the natural issue is to study the asymptotic behavior of such global solutions as $t\rightarrow \pm  \infty$. Here we shall show that our solutions scatters to a solution of the linear problem.

 \begin{proposition}\label{SCATTERSH1}{\bf ($H^2$ Scattering)}  Let $u(t)$ be  a global solution of \eqref{IBNLS} with initial data $u_0 \in  H^2(\mathbb{R}^N)$. Suppose  $\|u\|_{B(\dot{H}^{s_c})}< +\infty$ and $\sup\limits_{t\in \mathbb{R}}\|u(t)\|_{H^2_x}\leq \eta$. If one of the assumptions (i)-(iv) in Theorem \ref{GWPH2} hold, then $u(t)$ scatters in $H^2(\mathbb{R}^N)$ as $t \rightarrow \pm\infty$. More precisely, there exists  $\phi^{\pm}\in H^2(\mathbb{R}^N)$ such that
$$
\lim_{t\rightarrow \pm\infty}\|u(t)-e^{it\Delta^2}\phi^{\pm}\|_{H^2}=0.
$$
\end{proposition}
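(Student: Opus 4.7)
The plan is to follow the standard two-step scattering argument adapted to the $B(\dot{H}^{s_c})$ framework: first bootstrap the hypothesis into full global Strichartz control, then use that control to define and estimate the wave operator in $H^2$. The nonlinear estimates already established in the proof of Theorem \ref{GWPH2} are the main input; the rest is Duhamel bookkeeping.

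Step 1 (global Strichartz bounds). Since $\|u\|_{B(\dot{H}^{s_c})(\mathbb{R})}<+\infty$, we partition $\mathbb{R}$ into finitely many consecutive intervals $I_1,\dots,I_K$ such that $\|u\|_{B(\dot{H}^{s_c})(I_k)}<\delta$ for every $k$, where $\delta>0$ is the smallness threshold from Theorem \ref{GWPH2}. On each $I_k=[t_k,t_{k+1}]$, applying Strichartz estimates to the Duhamel formula
$$
u(t)=e^{i(t-t_k)\Delta^2}u(t_k)+i\lambda\int_{t_k}^{t}e^{i(t-t')\Delta^2}|x|^{-b}|u|^\alpha u(t')\,dt'
$$
together with the very nonlinear estimates developed to prove Theorem \ref{GWPH2} yields
$$
\|u\|_{B(L^2)(I_k)}+\|\Delta u\|_{B(L^2)(I_k)}\leq c\|u(t_k)\|_{H^2}+C\|u\|_{B(\dot{H}^{s_c})(I_k)}^{\alpha}\bigl(\|u\|_{B(L^2)(I_k)}+\|\Delta u\|_{B(L^2)(I_k)}\bigr).
$$
Using $\|u(t_k)\|_{H^2}\leq\eta$ and the smallness of the $B(\dot{H}^{s_c})$ norm on $I_k$, we absorb the nonlinear term on the left-hand side, getting $\|u\|_{B(L^2)(I_k)}+\|\Delta u\|_{B(L^2)(I_k)}\lesssim\eta$. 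Summing over the finitely many intervals produces finite global Strichartz norms for $u$.

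Step 2 (existence of $\phi^+$ and convergence). Define
$$
\phi^+:=u_0+i\lambda\int_0^{+\infty}e^{-it'\Delta^2}|x|^{-b}|u(t')|^\alpha u(t')\,dt'.
$$
For $0<t_1<t_2$, the dual Strichartz estimate combined with the same nonlinear bounds, now applied on $[t_1,t_2]$, gives
$$
\Bigl\|\int_{t_1}^{t_2}e^{-it'\Delta^2}|x|^{-b}|u|^\alpha u\,dt'\Bigr\|_{H^2}\leq C\|u\|_{B(\dot{H}^{s_c})([t_1,t_2])}^{\alpha}\bigl(\|u\|_{B(L^2)([t_1,t_2])}+\|\Delta u\|_{B(L^2)([t_1,t_2])}\bigr).
$$
The second factor is uniformly bounded by Step 1, while the first factor tends to zero as $t_1\to+\infty$ by absolute continuity of the finite $B(\dot{H}^{s_c})$-integral. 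Hence the integral defining $\phi^+$ converges in $H^2$, so $\phi^+\in H^2(\mathbb{R}^N)$. Using Duhamel,
$$
u(t)-e^{it\Delta^2}\phi^+=-i\lambda\int_t^{+\infty}e^{i(t-t')\Delta^2}|x|^{-b}|u(t')|^\alpha u(t')\,dt',
$$
whose $H^2$-norm is controlled by the same product and therefore vanishes as $t\to+\infty$. The case $t\to-\infty$ is symmetric and yields $\phi^-$.

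The main obstacle is entirely hidden in the nonlinear estimate isolating $\|u\|_{B(\dot{H}^{s_c})}^{\alpha}$ from the regularity-carrying norms $\|u\|_{B(L^2)}$ and $\|\Delta u\|_{B(L^2)}$: this is exactly the estimate that forced the case analysis (i)-(iv) in Theorem \ref{GWPH2} and required splitting the spatial integrals inside and outside the unit ball to cope with the singular weight $|x|^{-b}$. Once one checks that this estimate extends without change to arbitrary subintervals and to the dual Strichartz side needed in Step 2, the scattering argument is routine.
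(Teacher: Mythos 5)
Your proposal takes essentially the same approach as the paper: partition $\mathbb{R}$ so that $\|u\|_{B(\dot{H}^{s_c};I_j)}$ is small on each piece, bootstrap this into $\|u\|_{B(L^2)}+\|\Delta u\|_{B(L^2)}<\infty$ interval by interval, and then show that the Duhamel tail defines $\phi^\pm\in H^2$ and that $\|u(t)-e^{it\Delta^2}\phi^\pm\|_{H^2}\to 0$ using the vanishing of $\|u\|_{B(\dot{H}^{s_c};[t,\infty))}$. Step 2 of your argument matches the paper's proof closely.

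There is, however, a gap in Step 1. Your scheme relies on a nonlinear estimate that is \emph{linear} in the regularity-carrying norms, so that the nonlinear contribution can be absorbed on the left-hand side once $\|u\|_{B(\dot{H}^{s_c};I_j)}<\delta$ is small. This works under assumptions (i), (ii) and (iv), where the relevant nonlinear estimates (Lemmas \ref{lemmaglobal1}, \ref{lemmaglobal2}, \ref{lemmaparcial}, \ref{lemmaglobal3}) produce bounds of the schematic form $\|u\|^{\theta}_{L^\infty_tH^2_x}\|u\|^{\alpha-\theta}_{B(\dot{H}^{s_c})}\bigl(\|u\|_{B(L^2)}+\|\Delta u\|_{B(L^2)}\bigr)$ (modulo constant terms bounded by $\eta$). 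But under assumption (iii) ($N=6,7$, $0<b<N-4$), Lemma \ref{lemmaglobal4} carries an extra term of the form $c\|u\|^{1-\sigma}_{L_t^\infty H^2_x}\|u\|^{\theta^*}_{B(\dot{H}^{s_c})}\|\Delta u\|^{\alpha-\theta^*+\sigma}_{B(L^2)}$, where $\alpha-\theta^*+\sigma>1$. This contribution is strictly superlinear in $\|\Delta u\|_{B(L^2)}$, so it cannot simply be moved to the left. The paper handles this by a continuity argument: set $A(t)=\|u\|_{B(L^2;[t_j,t])}+\|\Delta u\|_{B(L^2;[t_j,t])}$, derive
\[
A(t)\leq 2c\eta+2c\eta^{1-\sigma}\delta^{\theta^*}A(t)^{\alpha-\theta^*+\sigma},
\]
and conclude $A(t)\leq 2c\eta$ for all $t\in I_j$ by taking $\delta$ sufficiently small and noting $A(t_j)\leq\eta$, using that the exponent exceeds $1$. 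Your argument as written does not cover this case; the absorption step needs to be replaced by this bootstrap, and without it the claim that the scattering argument is ``routine'' once the nonlinear estimate is available overstates the matter for case (iii).
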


Note that Proposition \ref{SCATTERSH1} gives a suitable criterion to establish the scattering of a global solution. Is is clear that we do not need to assume that $\|e^{it\Delta^2}u_0\|_{B(\dot{H}^{s_c})}$ is small. However, Proposition \ref{SCATTERSH1} immediately gives the scattering of small solutions. More precisely,

\begin{corollary}
Assume that assumptions in Theorem \ref{GWPH2} hold. If $\eta$ is sufficiently small then the unique global solution  scatters in $H^2(\mathbb{R}^N)$.
\end{corollary}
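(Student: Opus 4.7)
The plan is to deduce the corollary as a direct consequence of Theorem \ref{GWPH2} together with the scattering criterion in Proposition \ref{SCATTERSH1}. The two hypotheses that Proposition \ref{SCATTERSH1} requires, namely $\|u\|_{B(\dot{H}^{s_c})}<+\infty$ and $\sup_{t\in\mathbb{R}}\|u(t)\|_{H^2_x}\leq\eta$, should both be available once $\|u_0\|_{H^2}$ is taken sufficiently small.

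First I would fix $\eta>0$ small and assume $\|u_0\|_{H^2}\leq \eta$. Since we are working in the intercritical range $0<s_c<2$, the embedding $H^2(\mathbb{R}^N)\hookrightarrow H^{s_c}(\mathbb{R}^N)$ gives $\|u_0\|_{\dot{H}^{s_c}}\lesssim \|u_0\|_{H^2}\leq \eta$. Combining this with the Strichartz estimate \eqref{SE2} alluded to in the discussion after Theorem \ref{GWPH2}, namely
\begin{equation*}
\|e^{it\Delta^2}u_0\|_{B(\dot{H}^{s_c})}\leq c\|u_0\|_{\dot{H}^{s_c}},
\end{equation*}
I obtain $\|e^{it\Delta^2}u_0\|_{B(\dot{H}^{s_c})}\leq c\eta$. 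Choosing $\eta\leq \delta(\eta)/c$ (with $\delta$ as in Theorem \ref{GWPH2}, noting that the dependence $\delta=\delta(\eta)$ is nondecreasing in $\eta$ so this smallness condition can be met for small enough $\eta$) guarantees $\|e^{it\Delta^2}u_0\|_{B(\dot{H}^{s_c})}<\delta$, and hence Theorem \ref{GWPH2} furnishes a unique global solution satisfying
\begin{equation*}
\|u\|_{B(\dot{H}^{s_c})}\leq 2\|e^{it\Delta^2}u_0\|_{B(\dot{H}^{s_c})}<2\delta<+\infty,\qquad \|u\|_{B(L^2)}+\|\Delta u\|_{B(L^2)}\leq 2c\|u_0\|_{H^2}\leq 2c\eta.
\end{equation*}
The first bound already gives the first hypothesis of Proposition \ref{SCATTERSH1}.

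Next I would verify the uniform $H^2$ bound. Since the pair $(\infty,2)$ is $B$-admissible, the norm $\|u\|_{L^\infty_t L^2_x}+\|\Delta u\|_{L^\infty_t L^2_x}$ is controlled by $\|u\|_{B(L^2)}+\|\Delta u\|_{B(L^2)}$, so
\begin{equation*}
\sup_{t\in\mathbb{R}}\|u(t)\|_{H^2_x}\lesssim \|u\|_{B(L^2)}+\|\Delta u\|_{B(L^2)}\leq 2c\eta.
\end{equation*}
Thus, up to replacing $\eta$ by a larger constant multiple (and correspondingly shrinking the size of the initial data), the second hypothesis of Proposition \ref{SCATTERSH1} is also satisfied.

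With both hypotheses in place, Proposition \ref{SCATTERSH1} applies directly and yields scattering of $u$ in $H^2(\mathbb{R}^N)$ as $t\to\pm\infty$. The only mildly delicate point is bookkeeping the constants: one must pick $\eta$ small enough simultaneously to meet the smallness condition $c\eta<\delta(\eta)$ of Theorem \ref{GWPH2} and to make the resulting bound $2c\eta$ on $\sup_t\|u(t)\|_{H^2}$ fit within the regime where Proposition \ref{SCATTERSH1} applies. This is straightforward, however, since all constants are universal (depending only on $N,\alpha,b$), and there is no need to iterate or invoke any further nonlinear estimates beyond those already encoded in Theorem \ref{GWPH2} and Proposition \ref{SCATTERSH1}.
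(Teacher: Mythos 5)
Your proposal is essentially correct and follows the same route the paper intends: combine Theorem \ref{GWPH2} (to get a global solution with finite $B(\dot{H}^{s_c})$ norm and uniform $H^2$ bound) with the scattering criterion of Proposition \ref{SCATTERSH1}. The bookkeeping is fine, including the observation that $(\infty,2)$ is $B$-admissible and so $\sup_t\|u(t)\|_{H^2_x}$ is controlled by $\|u\|_{B(L^2)}+\|\Delta u\|_{B(L^2)}$, and that the $\eta$ appearing in Proposition \ref{SCATTERSH1} is merely a finite bound, not a smallness parameter.

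One factual slip in the justification: you assert that $\delta=\delta(\eta)$ from Theorem \ref{GWPH2} is \emph{nondecreasing} in $\eta$. Looking at the choice made in the proof of Theorem \ref{GWPH2} (inequality \eqref{WD1}), one has $\delta(\eta)\lesssim\eta^{-\theta/(\alpha-\theta)}$, so $\delta(\eta)$ is in fact \emph{nonincreasing} in $\eta$ and blows up as $\eta\to 0^+$. Your stated reasoning (``nondecreasing, hence the inequality $c\eta\le\delta(\eta)$ can be met for small $\eta$'') would not be conclusive if $\delta(\eta)$ were nondecreasing with $\delta(0^+)=0$. The correct observation is simpler and stronger: since $\delta(\eta)\to+\infty$ as $\eta\to 0^+$ while $c\eta\to 0$, the compatibility condition $c\eta<\delta(\eta)$ trivially holds for all $\eta$ sufficiently small. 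With this correction the proof is sound and matches the paper's intended argument.
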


We believe the existence of global solutions and scattering, in the intercritical case, may be obtained for large initial data if they satisfy a suitable balance between the mass and the energy. For the NLS equation this was already obtained, for instance, in \cite{JIANCAZENAVE} and \cite{HOLROU}. In the case of the INLS this was recently obtained in \cite{LG}.  This is also under investigation.

The last part of this work is devoted to study stability of the solutions of \eqref{IBNLS}, in the intercritical case ($0<s_c<2$). By
stability we mean if we have an approximate solution to \eqref{IBNLS}, as in \eqref{appsol}, 
with an $e$ small in a suitable norm and $\widetilde{u}_0-u_0$ small in $\dot{H}^{s_c}$, then there exists a solution $u$ to \eqref{IBNLS} which stays  close to  $\widetilde{u}$ in critical norms. More precisely,

\begin{theorem}\label{LTP} 
Assume that assumptions in Theorem 1.6 hold. Let $I\subseteq \mathbb{R}$ be a time interval containing zero. Let $\widetilde{u}$  be a solution to 
\begin{equation}\label{appsol}
i\partial_t \widetilde{u} +\Delta^2 \widetilde{u} + \lambda |x|^{-b} |\widetilde{u}|^\alpha \widetilde{u} =e,
\end{equation}  
defined on $I\times \mathbb{R}^N$, with initial data $\widetilde{u}_0\in H^2(\mathbb{R}^N)$, satisfying (for some positive constants $M,L$)
\begin{equation}\label{HLP1} 
\sup_{t\in I}  \|\widetilde{u}\|_{H^2_x}\leq M \;\; \textnormal{and}\;\; \|\widetilde{u}\|_{B(\dot{H}^{s_c}; I)}\leq L.
\end{equation}
 
\indent Let $u_0\in H^2(\mathbb{R}^N)$ such that 
\begin{equation}\label{HLP2}
\|u_0-\widetilde{u}_0\|_{H^2}\leq M'\;\; \textnormal{and}\;\; \|e^{it\Delta^2}(u_0-\widetilde{u}_0)\|_{B(\dot{H}^{s_c}; I)}\leq \varepsilon,
\end{equation}
for some positive constant $M'$ and some $0<\varepsilon<\varepsilon_1=\varepsilon_1(M,M',L)$. In addition, assume also the following conditions
\begin{equation*}
\|e\|_{B'(\dot{H}^{s_c}; I)}+\| e\|_{B'(L^2; I)}+ \|\nabla e\|_{L^2_IL^{\frac{2N}{N+2}}_x}\leq \varepsilon.
\end{equation*}
\indent Then, there exists a unique solution $u$ to \eqref{IBNLS} on $I\times \mathbb{R}^N$, with  $u(0)=u_0$, satisfying 
\begin{equation}\label{CLP} 
\|u-\widetilde{u}\|_{B(\dot{H}^{s_c}; I)}\leq C(M,M',L)\varepsilon\;\;\;\;\;\;\;\textnormal{and}
\end{equation}
\begin{equation}\label{CLP1}
\|u\|_{B(\dot{H}^{s_c}; I)} +\|u\|_{B(L^2; I)}+\|\Delta u\|_{B(L^2; I)}\leq C(M,M',L).
\end{equation}
\end{theorem}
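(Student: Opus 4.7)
The strategy is the standard long-time perturbation argument used for the NLS and INLS equations (compare \cite{HOLROU, LG}), adapted to the biharmonic setting by reusing the nonlinear estimates that underlie Theorem \ref{GWPH2}. Set $w = u - \widetilde{u}$, which (at least formally) solves
\begin{equation*}
i\partial_t w + \Delta^2 w + \lambda |x|^{-b}\bigl(|w+\widetilde{u}|^{\alpha}(w+\widetilde{u}) - |\widetilde{u}|^{\alpha}\widetilde{u}\bigr) = -e, \qquad w(0) = u_0-\widetilde{u}_0,
\end{equation*}
and has the Duhamel representation
\begin{equation*}
w(t) = e^{it\Delta^2}w(0) + i\lambda\int_0^t e^{i(t-s)\Delta^2}|x|^{-b}\mathcal{N}(w,\widetilde{u})(s)\,ds - i\int_0^t e^{i(t-s)\Delta^2} e(s)\,ds,
\end{equation*}
where $\mathcal{N}(w,\widetilde{u}) := |w+\widetilde{u}|^\alpha (w+\widetilde{u}) - |\widetilde{u}|^\alpha \widetilde{u}$ satisfies the pointwise bound $|\mathcal{N}(w,\widetilde{u})| \lesssim (|w|^\alpha + |\widetilde{u}|^\alpha)|w|$.

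Next I would fix a small parameter $\delta>0$ and partition $I = \bigcup_{j=1}^{n} I_j$ into $n = n(L,\delta)$ consecutive subintervals with $\|\widetilde{u}\|_{B(\dot{H}^{s_c}; I_j)} \leq \delta$; this is possible thanks to \eqref{HLP1}. On each $I_j$, applying exactly the Strichartz and nonlinear estimates used in the proof of Theorem \ref{GWPH2} (in particular the inside/outside unit ball decomposition of Section \ref{Sec3} responsible for the cases (i)--(iv) there), the pointwise bound on $\mathcal{N}(w,\widetilde{u})$ yields an inequality of the form
\begin{equation*}
\|w\|_{B(\dot{H}^{s_c};I_j)} \leq c\,\bigl\|e^{i(t-t_j)\Delta^2}w(t_j)\bigr\|_{B(\dot{H}^{s_c};I_j)} + c\,\delta^{\alpha}\|w\|_{B(\dot{H}^{s_c};I_j)} + c\,\|w\|_{B(\dot{H}^{s_c};I_j)}^{\alpha+1} + c\,\|e\|_{B'(\dot{H}^{s_c};I_j)}.
\end{equation*}
Choosing $\delta$ so small that $c\delta^\alpha \leq 1/2$ and running a standard continuity/bootstrap argument to absorb the super-linear term $\|w\|^{\alpha+1}$, one concludes that on each short interval $\|w\|_{B(\dot{H}^{s_c};I_j)}$ is governed by the linear evolution of $w(t_j)$ and by the error $e$.

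The iteration across the $n$ subintervals proceeds as usual: writing $e^{i(t-t_{j+1})\Delta^2}w(t_{j+1})$ as $e^{it\Delta^2}w(0)$ plus Duhamel contributions coming from $I_1\cup\cdots\cup I_j$, Strichartz gives a recursion in which each step multiplies the previous bound by a fixed constant and adds a multiple of $\varepsilon$. After $n$ steps one obtains $\|w\|_{B(\dot{H}^{s_c};I)}\leq C(M,M',L)\,\varepsilon$, provided $\varepsilon<\varepsilon_1(M,M',L)$ is small enough that the bootstrap hypothesis is never violated; this gives \eqref{CLP}. The bound \eqref{CLP1} follows by writing $u = \widetilde{u}+w$, using \eqref{HLP1}, and repeating the same scheme in the non-critical $B(L^2)$ and $\|\Delta\,\cdot\,\|_{B(L^2)}$ norms; uniqueness and continuity in time of $u$ are standard.

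The main obstacle is the same one that already appeared in Theorem \ref{GWPH2}: closing the nonlinear difference estimate for $|x|^{-b}\mathcal{N}(w,\widetilde{u})$ on both $\{|x|\le 1\}$ (where one needs H\"older with large Lebesgue exponents to absorb the singular weight) and $\{|x|\ge 1\}$ (where the weight helps but one must control $w$ and $\widetilde{u}$ in Sobolev-type norms via $B$-admissible pairs). One has to arrange the estimate so that exactly one factor of $w$ is put in $B(\dot{H}^{s_c})$ while the remaining $\alpha$ factors are absorbed into either the small $B(\dot{H}^{s_c})$ norm of $\widetilde{u}$ (giving the $\delta^\alpha$ factor above) or into $\|w\|_{B(\dot{H}^{s_c};I_j)}^{\alpha+1}$, which will later be handled by bootstrap. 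It is precisely at this step that the range restrictions (i)--(iv) in Theorem \ref{GWPH2} become essential, and no new range issue should arise here beyond them.
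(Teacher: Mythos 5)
Your proposal captures the paper's strategy: partition $I$ into subintervals where $\|\widetilde u\|_{B(\dot H^{s_c};I_j)}$ is small, run a short-time perturbation argument on each piece, and iterate across the $n$ subintervals by controlling the initial data $w(t_j)$ via Duhamel. This is exactly what the paper does, except that it organizes the per-interval step as a self-contained short-time perturbation proposition (Proposition 5.1) proved by contraction mapping on a ball $B_{\rho,K}$, rather than by the continuity/bootstrap argument you sketch; the two are equivalent once local existence for $u$ is known.

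One point of caution: your displayed inequality
$\|w\|_{B(\dot H^{s_c};I_j)} \le c\|e^{i(t-t_j)\Delta^2}w(t_j)\|_{B(\dot H^{s_c};I_j)} + c\delta^\alpha\|w\|_{B(\dot H^{s_c};I_j)} + c\|w\|_{B(\dot H^{s_c};I_j)}^{\alpha+1} + c\|e\|_{B'(\dot H^{s_c};I_j)}$
is a simplification. The actual nonlinear estimates underlying Theorem 1.6 (Lemmas 4.2--4.6 and Remark 4.3) always carry a factor of $\|\cdot\|^{\theta}_{L^\infty_t H^2_x}$ in front of the $B(\dot H^{s_c})$ powers, so the term that should appear is of the form $M^\theta\delta^{\alpha-\theta}\|w\|_{B(\dot H^{s_c})}$ and $\|w\|^\theta_{L^\infty H^2}\|w\|^{\alpha+1-\theta}_{B(\dot H^{s_c})}$ rather than $\delta^\alpha\|w\|$ and $\|w\|^{\alpha+1}$. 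This coupling is why the $B(L^2)$ and $\Delta B(L^2)$ norms of $w$ cannot be run ``afterwards'' as a separate scheme: the short-time step must control $\|w\|_{B(\dot H^{s_c})}$, $\|w\|_{B(L^2)}$, and $\|\Delta w\|_{B(L^2)}$ simultaneously (the paper picks $\rho$ and $K$ at the same time), and the $\|\Delta w\|_{B(L^2)}$ estimate additionally requires the gradient-of-nonlinearity bound in (2.9) together with Remark 4.3. These are details rather than gaps in strategy, but they are needed to close the argument.
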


The proof o Theorem \ref{LTP} also relies on the estimates presented in Section \ref{secglo}. Note that the case $e=0$ corresponds to the question of continuous dependence upon the data.

\ The rest of the paper is organized as follows. In section \ref{sec2}, we introduce some notations and give a review of the Strichartz estimates. In Section \ref{Sec3}, we prove the local well-posedness results. In Section \ref{secglo}, we prove the results concerning the global theory as well as the scattering one.  The final section, Section $5$, is devoted to study the stability theory.

\section{\bf Notation and Preliminaries}\label{sec2}

In this section, we introduce the notation used throughout the paper and list some useful results.  We use $c$ to denote various constants that may vary line by line. Let $a$ and $b$ be positive real numbers, the
notation $a \lesssim b$ means that there exists a positive constant $c$ such that $a \leq cb$. Given a real number $r$, we use $r+$ to denote $r+\varepsilon$ for some $\varepsilon>0$ sufficiently small. For a subset $A\subset \mathbb{R}^N$, $A^C=\mathbb{R}^N \backslash A$ denotes the complement of $A$. Given $x,y \in \mathbb{R}^N$, $x \cdot y$ denotes the usual inner product of $x$ and $y$ in $\mathbb{R}^N$.

The norm in the Sobolev spaces $H^{s,r}=H^{s,r}(\mathbb{R}^N)$ and $\dot{H}^{s,r}=\dot{H}^{s,r}(\mathbb{R}^N)$, are defined, respectively, by $\|f\|_{H^{s,r}}:=\|J^sf\|_{L^r}$ and $\|f\|_{\dot{H}^{s,r}}:=\|D^sf\|_{L^r},$
where  $J^s$ and $D^s$ stand for the Bessel and Riesz potentials of order $s$, given via Fourier transform by $\widehat{J^s f}=(1+|\xi|^2)^{\frac{s}{2}}\widehat{f}$ and $\widehat{D^sf}=|\xi|^s\widehat{f}.
$
If $r=2$ we denote $H^{s,2}$ and $\dot{H}^{s,2}$ simply by $H^s$ and  $\dot{H}^{s}$, respectively.

Let $ q,r >0$, $s\in \mathbb{R}$, and $I\subset \mathbb{R}$ an interval; the mixed norms in the spaces $L^q_{I}L^r_x$ and $L^q_{I} H^s_x$ of a function $f=f(t,x)$ are defined as
$$
\|f\|_{L^q_{I}L^r_x}=\left(\int_I\|f(t,\cdot)\|^q_{L^r_x}dt\right)^{\frac{1}{q}}
\qquad
\mbox{and}
\qquad
\|f\|_{L^q_{I}H^s_x}=\left(\int_I\|f(t,\cdot)\|^q_{H^s_x}dt\right)^{\frac{1}{q}},
$$
with the usual modifications if either $q=\infty$ or $r=\infty$. When the  $x$-integration is restricted to a subset $A\subset\mathbb{R}^N$ then the mixed norm will be denoted by $\|f\|_{L_I^qL^r_x(A)}$. Moreover, if $I=\mathbb{R}$ we shall use the notations $\|f\|_{L_t^qL^r_x}$ and $\|f\|_{L_t^qH^s_x}$.

Next, we recall the Sobolev inequalities.
\begin{lemma}[\textbf{Sobolev embedding}]\label{SI} Let $s\in (0,+\infty)$ and $1\leq p<+\infty$.
\begin{itemize}
\item [(i)] If $s\in (0,\frac{N}{p})$ then $H^{s,p}(\mathbb{R}^N)$ is continuously embedded in $L^r(\mathbb{R^N})$ where $s=\frac{N}{p}-\frac{N}{r}$. Moreover, 
\begin{equation}\label{SEI} 
\|f\|_{L^r}\leq c\|D^sf\|_{L^{p}}.
\end{equation}
\item [(ii)] If $s=\frac{N}{2}$ then $H^{s}(\mathbb{R}^N)\subset L^r(\mathbb{R^N})$ for all $r\in[2,+\infty)$. Furthermore,
\begin{equation}\label{SEI1} 
\|f\|_{L^r}\leq c\|f\|_{H^{s}}.
\end{equation}
\item [(iii)] If $s>\frac{N}{2}$ then $H^{s}(\mathbb{R}^N)\subset L^\infty(\mathbb{R^N})$.
\end{itemize}
\end{lemma}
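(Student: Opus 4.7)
The statement is the classical Sobolev embedding lemma for Bessel potential spaces, so the plan is to assemble it from standard ingredients (Hardy--Littlewood--Sobolev, Fourier inversion, and interpolation) rather than invent anything new.

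For part (i), the starting point is to represent $f$ via the Riesz potential. Given $f\in H^{s,p}(\mathbb{R}^N)$ with $0<s<N/p$, set $g:=D^s f\in L^p(\mathbb{R}^N)$. Since $\widehat{D^s f}=|\xi|^s\widehat{f}$, we have $\widehat{f}=|\xi|^{-s}\widehat{g}$, which on the space side reads $f=I_s g$, where $I_s$ is the Riesz potential of order $s$ with kernel proportional to $|x|^{-(N-s)}$. The Hardy--Littlewood--Sobolev inequality then yields
$$
\|f\|_{L^r}=\|I_s g\|_{L^r}\leq c\|g\|_{L^p}=c\|D^s f\|_{L^p},
$$
precisely when $\tfrac{1}{r}=\tfrac{1}{p}-\tfrac{s}{N}$, i.e.\ $s=\tfrac{N}{p}-\tfrac{N}{r}$, with the density of Schwartz functions used to extend from smooth $f$ to general $f\in H^{s,p}$.

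For part (iii), the argument is even more direct. By Fourier inversion, $\|f\|_{L^\infty}\leq (2\pi)^{-N}\|\widehat{f}\|_{L^1}$, and then Cauchy--Schwarz with the weight $(1+|\xi|^2)^{s/2}$ gives
$$
\|\widehat{f}\|_{L^1}\leq \left(\int_{\mathbb{R}^N}(1+|\xi|^2)^{-s}\,d\xi\right)^{1/2}\left(\int_{\mathbb{R}^N}(1+|\xi|^2)^{s}|\widehat{f}(\xi)|^2\,d\xi\right)^{1/2}=c_{s,N}\|f\|_{H^s}.
$$
The first factor is finite exactly when $s>N/2$, which is the hypothesis.

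For part (ii), I would interpolate between part (i) (applied just below the critical exponent) and the trivial embedding $H^s\hookrightarrow L^2$. Fix any $r\in[2,\infty)$; the case $r=2$ is immediate, and for $r>2$ choose $s'<N/2$ close enough to $N/2$ so that $s'=\tfrac{N}{2}-\tfrac{N}{r}$, equivalently $r=\tfrac{2N}{N-2s'}$. Part (i) with $p=2$ gives $\|f\|_{L^r}\leq c\|D^{s'}f\|_{L^2}\leq c\|f\|_{H^{s'}}$, and since $s'<N/2=s$ one has $\|f\|_{H^{s'}}\leq \|f\|_{H^s}$ by monotonicity of the Bessel weight, yielding the claim.

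There is no real obstacle here: every step is classical, and the only substantive tool invoked is the Hardy--Littlewood--Sobolev inequality in part (i). The minor care needed is handling the boundary cases in $r$ for part (ii) and checking that the interpolation exponent $s'$ can be chosen in the admissible range, which is immediate since $r<\infty$ forces $s'<N/2$ strictly.
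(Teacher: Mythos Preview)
Your argument is correct and is essentially the standard textbook proof of these embeddings. The paper itself does not prove this lemma: it simply cites Bergh--L\"ofstr\"om, Linares--Ponce, and Demengel--Demengel, so in that sense you have supplied more than the authors did. The route you take (Riesz potential plus Hardy--Littlewood--Sobolev for (i), Fourier inversion and Cauchy--Schwarz for (iii), and reduction to a subcritical exponent for (ii)) is exactly the one found in those references.

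One minor caveat worth noting: your proof of (i) via Hardy--Littlewood--Sobolev requires $p>1$, whereas the lemma as stated allows $p=1$. This endpoint is more delicate (HLS fails there), but it is irrelevant for the paper's applications, which only ever use $p\geq 2$.
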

\begin{proof} See Bergh-L\"ofstr\"om \cite[Theorem $6.5.1$]{BERLOF} (see also Linares-Ponce \cite[Theorem $3.3$]{FELGUS} and Demenguel-Demenguel \cite[Proposition 4.18]{DEMENGEL}). 
\end{proof}

\ Next, we recall some Strichartz type estimates associated to the linear biharmonic Schr\"odinger propagator. We say the pair $(q,r)$ is biharmonic Schr\"odinger admissible ($B$-admissible for short) if it satisfies
\begin{equation*}
\frac{4}{q}=\frac{N}{2}-\frac{N}{r},
\end{equation*}
with
\begin{equation}\label{L2Admissivel}
\begin{cases}
2\leq  r  < \frac{2N}{N-4},\hspace{0.5cm}\textnormal{if}\;\;\;  N\geq 5,\\
2 \leq  r < + \infty,\;  \hspace{0.5cm}\textnormal{if}\;\;\;1\leq N\leq 4.
\end{cases}
\end{equation}

Also, given a real number $s<2$, the pair $(q,r)$ is called $\dot{H}^s$-biharmonic admissible  if 
\begin{equation}\label{CPA1}
\frac{4}{q}=\frac{N}{2}-\frac{N}{r}-s
\end{equation}
with
\begin{equation}\label{HsAdmissivel}
\begin{cases}
\frac{2N}{N-2s} \leq  r  <\frac{2N}{N-4}\;\;\;  N\geq 5,\\
2 \leq  r < + \infty,\;  \hspace{0.5cm}\textnormal{if}\;\;\;1\leq N\leq 4.
\end{cases}
\end{equation}
We set $\mathcal{B}_s:=\{(q,r);\; (q,r)\; \textnormal{is} \;\dot{H}^s\textnormal{-biharmonic admissible}\}$. Also, given $(q,r)\in \mathcal{B}_s$, by $(q',r')$ we denote its dual pair, that is, $\frac{1}{q}+\frac{1}{q'}=1$ and $\frac{1}{r}+\frac{1}{r'}=1$. We define the Strichartz norm by
$$
\|u\|_{B(\dot{H}^{s})}=\sup_{(q,r)\in \mathcal{B}_{s}}\|u\|_{L^q_tL^r_x} 
$$
and the dual Strichartz norm by
$$
\|u\|_{B'(\dot{H}^{-s})}=\inf_{(q,r)\in \mathcal{B}_{-s}}\|u\|_{L^{q'}_tL^{r'}_x}.
$$
Note that, if $s=0$ then $\mathcal{B}_0$ is the set of all $B$-admissible pairs. It is to be clear that we write $B(\dot{H}^s)$ or $B'(\dot{H}^{-s})$ if the mixed norm is evaluated over $\mathbb{R}\times\mathbb{R}^N$. To indicate the restriction to a time interval $I\subset (-\infty,\infty)$ or a subset $A\subset\mathbb{R}^N$, we will use the notations $B(\dot{H}^s(A);I)$ and $B'(\dot{H}^{-s}(A);I)$. 

\ The main tools to show the local and global well-posedness are the well-known Strichartz estimates. See for instance Pausader \cite{Pausader07} (see also \cite{Guo}).

\begin{lemma}\label{Lemma-Str}
Let $I\subset\mathbb{R}$ be an interval and $t_0\in I$.
	The following statements hold.
 \begin{itemize}
\item [(i)] (\textbf{Linear estimates}).
\begin{equation}\label{SE1}
\| e^{it\Delta^2}f \|_{B(L^2;I)} \leq c\|f\|_{L^2},
\end{equation}
\begin{equation}\label{SE2}
\| e^{it\Delta^2}f \|_{B(\dot{H}^s;I)} \leq c \|f\|_{\dot{H}^s}.
\end{equation}
\item[(ii)] (\textbf{Inhomogeneous estimates}).
\begin{equation}\label{SE3}					 
\left \| \int_{t_0}^t e^{i(t-t')\Delta^2}g(\cdot,t') dt' \right \|_{B(L^2;I) } \leq c\|g\|_{B'(L^2;I)},
\end{equation}
\begin{equation}\label{SE5}
\left \| \int_{t_0}^t e^{i(t-t')\Delta^2}g(\cdot,t') dt' \right \|_{B(\dot{H}^s;I) } \leq c\|g\|_{B'(\dot{H}^{-s};I)}.
\end{equation}
\end{itemize}
\end{lemma}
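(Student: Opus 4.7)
The plan is to follow the now-standard route for Strichartz estimates of dispersive propagators: derive a single-time dispersive decay, combine it with the $L^2$-unitarity of $e^{it\Delta^2}$, and then invoke the abstract $TT^*$/Keel--Tao machinery. First I would write the kernel of $e^{it\Delta^2}$ via the Fourier transform, $(e^{it\Delta^2}f)(x)=c\int e^{i x\cdot\xi + it|\xi|^4}\hat{f}(\xi)\,d\xi$, and apply stationary-phase/oscillatory-integral analysis to the phase $\Phi(\xi)=x\cdot\xi+t|\xi|^4$ to obtain the dispersive bound
\[
\|e^{it\Delta^2}f\|_{L^\infty_x}\le C|t|^{-N/4}\|f\|_{L^1_x},\qquad t\neq 0.
\]
Together with the trivial unitarity $\|e^{it\Delta^2}f\|_{L^2_x}=\|f\|_{L^2_x}$, Riesz--Thorin interpolation yields $\|e^{it\Delta^2}f\|_{L^r_x}\lesssim |t|^{-\frac{N}{4}(1-\frac{2}{r})}\|f\|_{L^{r'}_x}$ for every $r\in[2,\infty]$.

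With this family of decay estimates in hand, the homogeneous bound \eqref{SE1} and the diagonal inhomogeneous bound \eqref{SE3} for $B$-admissible pairs $(q,r)$, i.e., $\frac{4}{q}=\frac{N}{2}-\frac{N}{r}$, follow from the Keel--Tao theorem applied with dispersion parameter $\sigma=N/4$: the scaling condition in Keel--Tao, $\frac{1}{q}+\frac{\sigma}{r}=\frac{\sigma}{2}$, is exactly the $B$-admissibility relation. Keel--Tao also gives the non-diagonal inhomogeneous estimate between any two $B$-admissible pairs; the admissibility range \eqref{L2Admissivel} is chosen so as to stay strictly below the biharmonic analogue of the endpoint, avoiding the delicate $(q,r)=(2,\frac{2N}{N-4})$ case in $N\ge 5$ and keeping $q\ge 2$.

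To pass to the $\dot{H}^s$-estimates \eqref{SE2} and \eqref{SE5}, I would exploit the commutation of $D^s$ with $e^{it\Delta^2}$ together with Sobolev embedding. Concretely, given $(q,r)\in\mathcal{B}_s$, the pair $(q,\tilde r)$ defined by $\frac{N}{\tilde r}=\frac{N}{r}+s$ is $B$-admissible (this is precisely the content of \eqref{CPA1} together with \eqref{HsAdmissivel}); hence
\[
\|e^{it\Delta^2}f\|_{L^q_I L^r_x}\lesssim \|D^s e^{it\Delta^2}f\|_{L^q_I L^{\tilde r}_x}=\|e^{it\Delta^2}D^s f\|_{L^q_I L^{\tilde r}_x}\lesssim \|D^s f\|_{L^2_x},
\]
by the Sobolev embedding $\dot{H}^{s,\tilde r}\hookrightarrow L^r$ and the $B$-admissible Strichartz inequality \eqref{SE1} applied to $D^s f$. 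Taking the supremum over $\mathcal{B}_s$ gives \eqref{SE2}. The inhomogeneous version \eqref{SE5} is obtained by the same reduction, combined either with the diagonal inhomogeneous estimate \eqref{SE3} applied to $D^s g$ and duality, or by a direct application of the Christ--Kiselev lemma whenever the pairs forcing $q\neq q'$ prevent the $TT^*$ argument from closing in one step.

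The main technical obstacle is the dispersive estimate itself, because the phase $|\xi|^4$ is degenerate at the origin, so a clean stationary-phase bound requires a Littlewood--Paley decomposition: one handles each dyadic piece by rescaling to unit frequency, applies van der Corput/non-degeneracy on the Hessian of $|\xi|^4$ away from $\xi=0$, and sums the resulting dyadic bounds using the fact that the frequency-$2^j$ piece contributes $|t|^{-N/4}$ with a gain that survives the summation (this is standard and carried out in detail in \cite{Pausader07} and \cite{Guo}). After this decay is in place, everything else is a mechanical application of Keel--Tao, Sobolev embedding, and duality.
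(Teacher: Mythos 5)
The paper does not actually prove this lemma: it is stated as a known fact with references to Pausader \cite{Pausader07} and Guo \cite{Guo}, whose proofs follow essentially the route you outline (the $|t|^{-N/4}$ dispersive bound, Keel--Tao with $\sigma=N/4$, then Sobolev embedding to reach $\dot H^s$-admissible pairs). Your derivation of \eqref{SE1}, \eqref{SE2} and the diagonal estimate \eqref{SE3} along these lines is correct, including your remark that the degeneracy of the Hessian of $|\xi|^4$ at the origin forces either a Littlewood--Paley summation or a scaling-and-boundedness argument for the kernel.

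There is, however, a genuine gap in your treatment of \eqref{SE5}. The route ``Sobolev on the left to reduce to a $B$-admissible $(q,\tilde r)$, commute $D^s$ through the propagator, apply the diagonal estimate \eqref{SE3} to $D^s g$, finish by duality'' does not close. After the diagonal estimate you are left with a quantity of the form $\|D^s g\|_{L^{m'}_I L^{n'}_x}$ for some $B$-admissible $(m,n)$, and to dominate it by $\|g\|_{L^{a'}_I L^{b'}_x}$ with $(a,b)\in\mathcal B_{-s}$ you would need $D^s\colon L^{b'}\to L^{n'}$ with $\tfrac{N}{n'}=\tfrac{N}{b'}+s$ and $s>0$; this is a Sobolev embedding in the forbidden direction (it is $D^{-s}$, not $D^s$, that maps $L^{n'}\to L^{b'}$). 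The Christ--Kiselev lemma does not repair this, since it only upgrades an untruncated Duhamel estimate to a truncated one once the underlying non-admissible estimate is available; it cannot manufacture it. The standard way to close \eqref{SE5} is to bypass the diagonal estimate: apply Minkowski and Hardy--Littlewood--Sobolev in time to the interpolated dispersive bound
\begin{equation*}
\bigl\|e^{i(t-t')\Delta^2}h\bigr\|_{L^r_x}\lesssim |t-t'|^{-\frac{N}{4}\left(1-\frac{2}{r}\right)}\|h\|_{L^{r'}_x},
\end{equation*}
taking $b=r$; one then checks that the $\mathcal B_s$ and $\mathcal B_{-s}$ relations for $(q,r)$ and $(a,r)$ give exactly the HLS balance $\tfrac{1}{q}+\tfrac{1}{a}=\tfrac{N}{4}-\tfrac{N}{2r}$, yielding $\bigl\|\int_{t_0}^t e^{i(t-t')\Delta^2}g\,dt'\bigr\|_{L^q_I L^r_x}\lesssim\|g\|_{L^{a'}_I L^{r'}_x}$ directly (this is the Kato-type inhomogeneous estimate; equivalently one may invoke Foschi's non-admissible inhomogeneous Strichartz theorem as adapted to $e^{it\Delta^2}$ in \cite{Guo}).
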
 

Finally, we list other useful Strichartz estimates for the fourth-order Schr\"odinger equation. Recall that a pair $(q,r)$ is called Schr\"odinger admissible ($S$-admissible for short) if $ 2\leq q, r \leq\infty$, $(q,r,N)\neq (2,\infty, 2)$, and
\begin{equation*}
	\frac{2}{q}=\frac{N}{2}-\frac{N}{r}.
\end{equation*}

\begin{proposition}\label{estimativanaolinear} Let $I\subset\mathbb{R}$ be an interval and $t_0\in I$. Suppose that  $s \geq 0$ and $u\in C(I,H^{-4})$ is a solution of
$$
u(t)= e^{i(t-t_0)\Delta^2}u(t_0)+i\lambda \int_{t_0}^t e^{i(t-t')\Delta^2}F(\cdot,t')dt',
$$
for some function $F\in L^1_{loc}(I,H^{-4})$. Then,
\begin{itemize}
	\item[(i)] For any $S$-admissible pairs $(m, n)$ and $(a, b)$, we have
	\begin{equation}\label{ESB1}
	\left\|D^s u\right\|_{L^{m}_{I}L_x^{n}} \lesssim \left\| D^{s-\frac{2}{m}}u(t_0)\right\|_{L^{2}}+\left\|D^{s-\frac{2}{m}-\frac{2}{a}}F \right\|_{L^{a'}_{I}L_x^{b'}}.
	\end{equation}
	\item[(ii)] If $N \geq 3$ then for any $B$-admissible pair $(q,r)$, we obtain 
	\begin{equation}\label{ESB2}
	\left\|D^s u\right\|_{L^{q}_{I}L_x^{r}} \lesssim \left\| D^{s}u(t_0)\right\|_{L^{2}}+\left\|D^{s-1} F\right\|_{L^{2}_{I}L_x^{\frac{2N}{N+2}}},
	\end{equation}
	 \end{itemize} 

In particular, when $s=2$, \eqref{ESB1} writes as 
\begin{equation}\label{ESB3}
\left\|\Delta u\right\|_{L^{q}_{I}L_x^{r}}\lesssim \|D^{2+\frac{2}{q}} u\|_{L^{q}_{I}L_x^{\bar{r}}}  \lesssim \left\| \Delta u(t_0)\right\|_{L^{2}}+\left\|D^{2-\frac{2}{a}}F \right\|_{L^{a'}_{I}L_x^{b'}}
\end{equation}
and \eqref{ESB2} as
\begin{equation}\label{EstimativaImportante}
\left\|\Delta u\right\|_{L^{q}_{I}L_x^{r}} \lesssim \left\| \Delta u(t_0)\right\|_{L^{2}}+\left\|\nabla F\right\|_{L^{2}_{I}L_x^{\frac{2N}{N+2}}}.
\end{equation}
\end{proposition}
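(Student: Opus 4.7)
\medskip

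\textbf{Proof proposal for Theorem \ref{LTP}.} Set $w:=u-\widetilde u$. Then $w$ formally satisfies
\begin{equation*}
i\partial_t w+\Delta^2 w = -\lambda|x|^{-b}\bigl(|u|^\alpha u-|\widetilde u|^\alpha \widetilde u\bigr)-e,\qquad w(0)=u_0-\widetilde u_0,
\end{equation*}
so Duhamel gives
\begin{equation*}
w(t)=e^{it\Delta^2}(u_0-\widetilde u_0)+i\lambda\!\int_0^t\! e^{i(t-t')\Delta^2}|x|^{-b}\bigl(|u|^\alpha u-|\widetilde u|^\alpha \widetilde u\bigr)(t')\,dt'+i\!\int_0^t\! e^{i(t-t')\Delta^2}e(t')\,dt'.
\end{equation*}
The plan is a standard continuity/bootstrap argument on a partition of $I$ into finitely many subintervals on which the ``reference Strichartz norm'' $\|\widetilde u\|_{B(\dot H^{s_c};I_j)}$ is small, then iterate the resulting local control across all subintervals. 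Existence of $u$ itself on each subinterval will be obtained as a by-product of the same contraction-type bound (exactly as in the proof of Theorem \ref{GWPH2}, which is why the hypotheses (i)--(iv) are inherited).

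The central local estimate is the following. Fix $\eta_0>0$ small (to be determined from the nonlinear estimates), and partition $I$ into $J=J(L,\eta_0)$ subintervals $I_1,\dots,I_J$ so that $\|\widetilde u\|_{B(\dot H^{s_c};I_j)}\leq \eta_0$ for each $j$. On such an $I_j$, applying \eqref{SE2} and \eqref{SE5} to the Duhamel identity for $w$ yields
\begin{equation*}
\|w\|_{B(\dot H^{s_c};I_j)}\lesssim \|e^{it\Delta^2}w(t_j)\|_{B(\dot H^{s_c};I_j)}+\bigl\||x|^{-b}(|u|^\alpha u-|\widetilde u|^\alpha\widetilde u)\bigr\|_{B'(\dot H^{-s_c};I_j)}+\|e\|_{B'(\dot H^{s_c};I_j)}.
\end{equation*}
Here I would re-use, almost verbatim, the nonlinear estimates developed in Section \ref{secglo} for Theorem \ref{GWPH2}, but applied to the pointwise difference inequality $\bigl||u|^\alpha u-|\widetilde u|^\alpha\widetilde u\bigr|\lesssim (|u|^\alpha+|\widetilde u|^\alpha)|w|$. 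Splitting the $x$-integration into $B(0,1)$ and $B(0,1)^c$ to handle $|x|^{-b}$ (exactly as in Lemma \ref{LLH21} and the lemmas backing Theorem \ref{GWPH2}) one arrives at an estimate of the form
\begin{equation*}
\bigl\||x|^{-b}(|u|^\alpha u-|\widetilde u|^\alpha\widetilde u)\bigr\|_{B'(\dot H^{-s_c};I_j)}\lesssim \bigl(\|u\|_{B(\dot H^{s_c};I_j)}^\alpha+\|\widetilde u\|_{B(\dot H^{s_c};I_j)}^\alpha\bigr)\|w\|_{B(\dot H^{s_c};I_j)},
\end{equation*}
where any companion $L^\infty_t H^2_x$ factors that appear are absorbed into constants depending on $M,M'$ via $\|\widetilde u\|_{L^\infty_I H^2_x}\leq M$ and the (to-be-proved) bound $\|u\|_{L^\infty_I H^2_x}\leq C(M,M')$. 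If $\eta_0$ is chosen small enough that the $\widetilde u$ factor can be absorbed, a continuity argument closes the first subinterval and gives $\|w\|_{B(\dot H^{s_c};I_1)}\lesssim \varepsilon$.

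To propagate from $I_j$ to $I_{j+1}$, I would rewrite
\begin{equation*}
e^{it\Delta^2}w(t_{j+1})=e^{it\Delta^2}(u_0-\widetilde u_0)+i\lambda\!\int_0^{t_{j+1}}\! e^{i(t-t')\Delta^2}|x|^{-b}(|u|^\alpha u-|\widetilde u|^\alpha\widetilde u)\,dt'+i\!\int_0^{t_{j+1}}\! e^{i(t-t')\Delta^2}e\,dt',
\end{equation*}
so the ``new linear data'' in subinterval $I_{j+1}$ is controlled by $\varepsilon$ plus the accumulated nonlinear contributions from $I_1,\dots,I_j$; an induction with constants of the form $C_j=C_j(M,M',L)$ closes after $J$ steps, producing \eqref{CLP}. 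For \eqref{CLP1} I would run the analogous argument with the $B(L^2)$ Strichartz pair via \eqref{SE1}, \eqref{SE3} (using the $\|e\|_{B'(L^2;I)}$ hypothesis) and with the $\Delta$-derivative version via \eqref{EstimativaImportante} (which is precisely why the hypothesis $\|\nabla e\|_{L^2_I L^{2N/(N+2)}_x}\leq \varepsilon$ is included), bounding the nonlinearity by the product of an $H^2$ factor coming from $\|\widetilde u\|_{L^\infty_IH^2}+\|w\|_{L^\infty_I H^2}\leq M+C(M')$ and a small critical-norm factor that is controllable on each $I_j$. Uniqueness on $I$ follows from the same difference estimate applied to two candidate solutions and a standard continuity argument.

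The main obstacle, and where most of the technical work lies, is the nonlinear difference estimate in the dual $\dot H^{-s_c}$ Strichartz norm: one must choose admissible pairs compatible with the restrictions (i)--(iv) of Theorem \ref{GWPH2}, and in the interior region $\{|x|<1\}$ verify that the exponents leave enough room to bound $|x|^{-b}$ via H\"older after the Sobolev-type embeddings, while in the exterior region one uses $|x|^{-b}\leq 1$ but must account for the fact that $w$ only carries the critical derivative. Once this difference estimate is in place --- essentially a symmetrization of the nonlinear estimates already proved in Section \ref{secglo} --- the rest of the argument is the classical Kenig--Merle style long-time perturbation iteration.
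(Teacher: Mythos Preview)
Your proposal addresses the wrong statement. The statement under consideration is Proposition~\ref{estimativanaolinear}, the Strichartz-type smoothing estimate for the Duhamel formula (items (i) and (ii) and their $s=2$ specializations \eqref{ESB3}--\eqref{EstimativaImportante}). What you have written is a perturbation/bootstrap argument for Theorem~\ref{LTP}, the long-time stability theorem. These are different results at very different levels: Proposition~\ref{estimativanaolinear} is a linear estimate used as a tool, while Theorem~\ref{LTP} is a nonlinear consequence that \emph{invokes} \eqref{EstimativaImportante} as a black box. Nothing in your write-up establishes either \eqref{ESB1} or \eqref{ESB2}.

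For comparison, the paper's proof of Proposition~\ref{estimativanaolinear} is short and structural: part (i) is quoted from Pausader \cite[Proposition~3.1]{Pausader07}; part (ii) is reduced to (i) by a Sobolev embedding in space. Concretely, given a $B$-admissible pair $(q,r)$ one defines $\bar r$ by $\tfrac{2}{q}=\tfrac{N}{\bar r}-\tfrac{N}{r}$, so that $\|D^s u\|_{L^q_I L^r_x}\lesssim \|D^{s+2/q}u\|_{L^q_I L^{\bar r}_x}$; the key observation is that $(q,\bar r)$ is then $S$-admissible, and one applies \eqref{ESB1} with $(m,n)=(q,\bar r)$ and the $S$-admissible dual pair $(a,b)=\bigl(2,\tfrac{2N}{N-2}\bigr)$ (available since $N\geq 3$), which yields exactly \eqref{ESB2}. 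The specializations \eqref{ESB3} and \eqref{EstimativaImportante} follow by setting $s=2$. A correct proposal should reproduce this reduction, not the stability iteration.
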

\begin{proof}
For (i) see \cite[Proposition 3.1]{Pausader07}. For (ii), from Sobolev's embedding,
$$
\left\|D^s u\right\|_{L^{q}_{I}L_x^{r}} \lesssim \left\|D^{s+\frac{2}{q}} u\right\|_{L^{q}_{I}L_x^{\bar{r}}},
$$
where $\bar{r}$ is such that $\frac{2}{q}=\frac{N}{\bar{r}}-\frac{N}{r}$. Since $(q,r)$ is $B$-admissible, it is easily seen that $(q,\bar{r})$ is $S$-admissible. In addition, since $\left(2,\frac{2N}{N-2}\right)$ is also $S$-admissible with dual pair $\left(2,\frac{2N}{N+2}\right)$, the result then follows from (i).
\end{proof}

\begin{remark}
As usual, if $I=(T,+\infty)$ then in Lemma \ref{Lemma-Str} and Proposition \ref{estimativanaolinear} one may replace the integral $\int_{t_0}^t$ by $\int_t^{+\infty}$. This will be necessary in the proof of Proposition \ref{SCATTERSH1}. A similar statement holds if $I=(-\infty, T)$.
\end{remark}

Throughout the paper, $B$ will denote the unity ball in $\mathbb{R}^N$, that is, $B=\{ x\in \mathbb{R}^N;|x|\leq 1\}$. Recall that
	$$\||x|^{-b}\|_{L^\gamma(B)}<+\infty\;\;\;\textnormal{if}\;\;\frac{N}{\gamma}-b>0,
	$$
	and 
	$$\||x|^{-b}\|_{L^\gamma(B^C)}<+\infty\;\;\;\textnormal{if}\;\;\frac{N}{\gamma}-b<0.
	$$
This will be frequently used  along the paper. Finally, if
 $F(x,z)=|x|^{-b}|z|^\alpha z$,   then (see details in \cite[Remark 2.6]{CARLOS} and \cite[Remark 2.5]{paper2})
\begin{equation}\label{FEI}
 |F(x,z)-F(x,w)|\lesssim |x|^{-b}\left( |z|^\alpha+ |w|^\alpha \right)|z-w|
\end{equation}
and
\begin{equation}\label{SECONDEI}
\left|\nabla \left(F(x,z)-F(x,w)\right)\right|\lesssim  |x|^{-b-1}(|z|^{\alpha}+|w|^{\alpha})|z-w|+|x|^{-b}|z|^\alpha|\nabla (z- w)|+E, 
\end{equation}
where 
\begin{eqnarray*}
 E &\lesssim& \left\{\begin{array}{cl}
 |x|^{-b}\left(|z|^{\alpha-1}+|w|^{\alpha-1}\right)|\nabla w||z-w| & \textnormal{if}\;\;\;\alpha > 1 \vspace{0.2cm} \\ 
|x|^{-b}|\nabla w||z-w|^{\alpha} & \textnormal{if}\;\;\;0<\alpha\leq 1.
\end{array}\right.
\end{eqnarray*}

\section{\bf Local well-posedness}\label{Sec3}
				
\ In this section we prove the local well-posedness results. The theorems follow from a contraction mapping argument based on the Strichartz estimates. First, we show the local well-posedness in $L^2(\mathbb{R}^N)$ (Theorem \ref{LWPL2}) and then in $H^2(\mathbb{R}^N)$  (Theorem \ref{LWPH2}). 

\subsection{Local Well-Posedness in $L^2$} We start with the following lemma. It provides an estimate for the nonlinearity in the Strichartz spaces. 

\begin{lemma}\label{lemmaL2} 
Let $0<\alpha<\frac{8-2b}{N}$  and $0<b<\min \{4,N\}$. Then,
\begin{equation}\label{NlemmaL2} 
\left\|\chi_{B^C}|x|^{-b}|u|^{\alpha}v \right\|_{B'(L^2;I)}+\left\|\chi_{B}|x|^{-b}|u|^{\alpha}v \right\|_{B'(L^2;I)}\leq c (T^{\theta_1}+T^{\theta_2})\| u\|^{\alpha}_{B(L^2;I)}\| v\|_{B(L^2;I)},
\end{equation}
where $I=[0,T]$ and $c,\theta_1,\theta_2 >0$.
\begin{proof} We start by estimating $\left\|\chi_{B^C}|x|^{-b}|u|^{\alpha}v \right\|_{B'(L^2;I)}$. Indeed, if $x\in B^C$ then $|x|^{-b}<1$, thus
\begin{equation*}
\left\|\chi_{B^C} |x|^{-b}|u|^{\alpha}v \right\|_{B'\left(L^2;I\right)} \leq  \left\||u|^{\alpha}v
\right\|_{B'\left(L^2;I\right)}.
\end{equation*}
Let $(q,r)$ be the  $B$-admissible pair $\left(\frac{8(\alpha+2)}{N\alpha},\alpha+2\right)$. By using the H\"older inequality we have (since $\frac{1}{r'}=\frac{\alpha}{r}+\frac{1}{r}$)
\begin{align*}
\left\||u|^{\alpha}v
\right\|_{B'\left(L^2;I\right)} \leq  \left\|  |u|^{\alpha} v \right\|_{L^{q'}_IL_x^{r'}} 
\leq T^{\frac{1}{q_1}} \| u\|^{\alpha}_{L_I^{q}L_x^r}\| v\|_{L_I^{q}L_x^{r}}=T^{\theta_1} \| u\|^{\alpha}_{L_I^{q}L_x^r}\| v\|_{L_I^{q}L_x^{r}},
\end{align*}where $\theta_1=\frac{1}{q_1}=1-\frac{\alpha+2}{q}=\frac{8-N\alpha}{8}$, which is positive by our hypothesis on $\alpha$. Therefore, 
\begin{equation}\label{A1}
\left\|\chi_{B^C} |x|^{-b}|u|^{\alpha}v \right\|_{B'\left(L^2;I\right)}\leq c T^{\theta_1} \| u\|^{\alpha}_{B(L^2;I)}\| v\|_{B(L^2;I)}.
\end{equation}

\ We now consider the term $\left\|\chi_{B} |x|^{-b}|u|^{\alpha}v \right\|_{B'\left(L^2;I\right)}$. If $(q,r)$ is any $B$-admissible pair, applying H\"older's inequality, one has 
\begin{align*}
\left\|\chi_{B} |x|^{-b}|u|^{\alpha}v \right\|_{B'\left(L^2;I\right)} &\leq  \left\|\chi_{B} |x|^{-b}|u|^{\alpha} v \right\|_{L^{q'}_IL_x^{r'}} \leq \left \| \||x|^{-b}\|_{L^\gamma(B) } \|u\|^{\alpha}_{L_x^{\alpha r_1}} \|v\|_{L_x^{ r}}\right\|_{L_I^{q'}}\\
&\leq  \||x|^{-b}\|_{L^\gamma(B)}T^{\frac{1}{q_1}} \| u\|^{\alpha}_{L_I^{\alpha q_2}L_x^{\alpha r_1}}\| v\|_{L_I^{q}L_x^{r}}\\
&\leq  T^{\frac{1}{q_1}}\||x|^{-b}\|_{L^\gamma(B)} \| u\|^{\alpha}_{L_I^{q}L_x^r}\| v\|_{L_I^{q}L_x^{r}},
\end{align*}			
where
\begin{equation}\label{lemmaL21}
\begin{cases}
\frac{1}{r'}=\frac{1}{\gamma}+\frac{1}{r_1}+\frac{1}{r}, \\ 
\frac{1}{q'}=\frac{1}{q_1}+\frac{1}{q_2}+\frac{1}{q},\\
q= \alpha q_2,\;\;\;\;r=\alpha r_1.
\end{cases}
\end{equation}

Recall that $\||x|^{-b}\|_{L^\gamma(B)}$ is finite provided that $\frac{N}{\gamma}>b$. Hence, in view of \eqref{lemmaL21} the pair $(q,r)$ must satisfy
\begin{equation}\label{lemmaL22}
\begin{cases}
\frac{N}{\gamma}=N-\frac{N(\alpha+2)}{r}>b,\\
\frac{1}{q_1}=1-\frac{\alpha+2}{q}.
\end{cases}
 \end{equation}
The first inequality in \eqref{lemmaL22} is equivalent to 
\begin{equation}\label{lemmaL23}
\alpha<\frac{r(N-b)-2N}{N},
\end{equation}
for $r>\frac{2N}{N-b}$. Since $\alpha <\frac{8-2b}{N}$ we can choose $r$ such that $\frac{r(N-b)-2N}{N}=\frac{8-2b}{N}$, i.e., 
$r=\frac{8-2b+2N}{N-b}$. On the other hand, since $(q,r)$ must be $B$-admissible, it is clear that $q=\frac{8-2b+2N}{N}$. Next, it follows from the second equation in \eqref{lemmaL22} that
$$
\frac{1}{q_1}=\frac{8-2b-\alpha N}{8 -2b+2N},
$$ 
which is positive in view of our hypothesis on $\alpha$. Consequently, 
\begin{equation*}\label{A2}
\left\|\chi_{B} |x|^{-b}|u|^{\alpha}v \right\|_{B'\left(L^2;I\right)}\leq c T^{\theta_2} \| u\|^{\alpha}_{B(L^2;I)}\| v\|_{B(L^2;I)},
\end{equation*}
where $\theta_2=\frac{1}{q_1}>0$.
Combining \eqref{A1} with the last inequality we obtain \eqref{NlemmaL2}.	
\end{proof}
\end{lemma}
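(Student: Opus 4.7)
The plan is to control the two pieces of the nonlinearity separately, since the singular weight $|x|^{-b}$ behaves very differently inside and outside the unit ball.

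For the outside piece $\chi_{B^C}|x|^{-b}|u|^{\alpha}v$, I would just use $|x|^{-b}\leq 1$ on $B^C$ to reduce to estimating $\||u|^{\alpha}v\|_{B'(L^2;I)}$. The natural $B$-admissible pair to use is the one whose spatial exponent realizes the Sobolev scaling for the pure power nonlinearity, namely $(q,r) = \bigl(\frac{8(\alpha+2)}{N\alpha},\,\alpha+2\bigr)$, because then $\frac{1}{r'} = \frac{\alpha+1}{r}$ fits Hölder's inequality in space with $\alpha$ copies of $u$ and one copy of $v$ in $L^r_x$. Hölder in time then produces a factor of $T^{\theta_1}$ where $\theta_1 = 1 - \frac{\alpha+2}{q} = \frac{8 - N\alpha}{8}$; this is positive because $\alpha < \tfrac{8-2b}{N} < \tfrac{8}{N}$.

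For the inside piece $\chi_{B}|x|^{-b}|u|^{\alpha}v$ the weight really matters, so I would pull it out via a three-factor Hölder inequality, writing
\begin{equation*}
\tfrac{1}{r'} = \tfrac{1}{\gamma} + \tfrac{1}{r_1} + \tfrac{1}{r}, \qquad \tfrac{1}{q'} = \tfrac{1}{q_1} + \tfrac{1}{q_2} + \tfrac{1}{q}, \qquad r = \alpha r_1, \qquad q = \alpha q_2,
\end{equation*}
with the $|x|^{-b}$-factor placed in $L^\gamma(B)$ and the $u,v$ factors in $L^r_x$ at admissible level. The finiteness of $\||x|^{-b}\|_{L^\gamma(B)}$ forces $\frac{N}{\gamma} > b$, which via the first identity translates into an \emph{upper} bound on $r$, namely $r < \frac{(\alpha+2)N}{N-b}$. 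I would then select $r$ as large as possible inside this admissible window so that the time exponent $\theta_2 = 1 - \frac{\alpha+2}{q}$ comes out positive; an algebraic computation shows this is possible precisely under $\alpha < \tfrac{8-2b}{N}$, and a convenient choice is $r = \tfrac{8-2b+2N}{N-b}$ together with the $B$-admissible $q = \tfrac{8-2b+2N}{N}$.

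The main obstacle is the bookkeeping: the Hölder exponents, the integrability window $\gamma > N/b$ for the weight, and the $B$-admissibility relation $\tfrac{4}{q} = \tfrac{N}{2} - \tfrac{N}{r}$ must all be satisfied simultaneously while leaving positive time gain. The key structural input is that the subcritical assumption $\alpha < \tfrac{8-2b}{N}$ gives enough room in the chain of Hölder exponents to force $\theta_2 > 0$; without it, one could only close at $\theta_2 = 0$, i.e., the $L^2$-critical threshold. No Sobolev embedding is needed because we are working purely at the $L^2$ level; all the work is in matching indices.
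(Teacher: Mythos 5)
Your decomposition into $B$ and $B^C$, the choice of $B$-admissible pair $\bigl(\frac{8(\alpha+2)}{N\alpha},\alpha+2\bigr)$ for the outer piece, and the three-factor H\"older scheme with weight in $L^\gamma(B)$ for the inner piece all coincide with the paper's argument, and your final exponents $r=\frac{8-2b+2N}{N-b}$, $q=\frac{8-2b+2N}{N}$ are exactly the paper's. There is, however, a direction error in your bookkeeping for the inner term. From $\frac{1}{r'}=\frac{1}{\gamma}+\frac{1}{r_1}+\frac{1}{r}$ with $r_1=r/\alpha$ one computes $\frac{N}{\gamma}=N-\frac{N(\alpha+2)}{r}$, and the requirement $\frac{N}{\gamma}>b$ rearranges to $r>\frac{N(\alpha+2)}{N-b}$, a \emph{lower} bound on $r$, not the upper bound $r<\frac{(\alpha+2)N}{N-b}$ you wrote. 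Correspondingly, the heuristic ``select $r$ as large as possible'' is backwards: via $B$-admissibility $q=\frac{8r}{N(r-2)}$ is decreasing in $r$, so increasing $r$ \emph{shrinks} $\theta_2=1-\frac{\alpha+2}{q}$; you should instead keep $r$ close to (but strictly above) the lower bound $\frac{N(\alpha+2)}{N-b}$, or, as the paper does, tune $r$ so that the condition $\alpha<\frac{r(N-b)-2N}{N}$ becomes exactly the hypothesis $\alpha<\frac{8-2b}{N}$. Indeed your quoted $r=\frac{8-2b+2N}{N-b}$ is strictly larger than $\frac{N(\alpha+2)}{N-b}$ under the hypothesis on $\alpha$, so it violates the upper bound you stated while satisfying the correct lower bound; with the inequality flipped the rest of your computation goes through and yields $\theta_2=\frac{8-2b-N\alpha}{8-2b+2N}>0$ as desired.
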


\ Our goal now is to show Theorem \ref{LWPL2}. 
					
\begin{proof}[\bf{Proof of Theorem \ref{LWPL2}}]  For any $B$-admissible pair $(q,r)$, define
$$X= C\left( [-T,T];L^2(\mathbb{R}^N)\right) \bigcap  L^q\left([-T,T];L^{r}(\mathbb{R}^N)\right),$$ and 
\begin{equation*}\label{BL} 
 S(a,T)=\{u \in X : \|u\|_{B\left(L^2;[-T,T]\right)}\leq a \},
\end{equation*}
where $a$ and $T$ are positive constants to be determined later. We will prove there are $a$ and $T$ such that the operator $G$ defined in \eqref{OPERATOR} acts from $S(a,T)$ to itself and is a contraction.

\ Without loss of generality we consider only the case $t>0$. The Strichartz inequalities (\ref{SE1}) and (\ref{SE3}) yield
\[
\begin{split}
\|G(u)\|_{B(L^2;I)} &\lesssim \|u_0\|_{L^2}+ \left\|\int_0^t e^{i(t-t')\Delta^2}|x|^{-b}|u|^{\alpha}u  \right\|_{B(L^2;I)} \\
&\lesssim \|u_0\|_{L^2}+ \left\|\int_0^t e^{i(t-t')\Delta^2}\chi_{B^C}|x|^{-b}|u|^{\alpha}u  \right\|_{B(L^2;I)} \\
& \quad+\left\|\int_0^t e^{i(t-t')\Delta^2}\chi_{B}|x|^{-b}|u|^{\alpha}u  \right\|_{B(L^2;I)} \\
&\lesssim \|u_0\|_{L^2}+ \left\|\chi_{B^C}|x|^{-b}|u|^{\alpha}u  \right\|_{B'(L^2;I)}+\left\|\chi_{B}|x|^{-b}|u|^{\alpha}u  \right\|_{B'(L^2;I)},
\end{split}
\]
where $I=[0,T]$. Now, for any  $u\in S(a,T)$, Lemma \ref{lemmaL2} yields
\[
\begin{split}
\|G(u)\|_{B(L^2;I)}  
&\leq c\|u_0\|_{L^2}+c (T^{\theta_1}+T^{\theta_2})\| u\|^{\alpha+1}_{B(L^2;I)}\\
&\leq c\|u_0\|_{L^2}+c (T^{\theta_1}+T^{\theta_2}) a^{\alpha+1}.
\end{split}
\]
Next, by choosing $a=2c\|u_0\|_{L^2}$ and $T>0$ such that 
\begin{equation}\label{NT3}
c a^{\alpha} (T^{\theta_1}+T^{\theta_2}) < \frac{1}{4},
\end{equation}
we conclude  $G(u)\in S(a,T)$. Similarly, in view of \eqref{FEI},
\begin{equation*}
\begin{split}
\|G(u)-G(v)\|_{B(L^2;I)}
&\leq c\left\| \chi_{B^C}|x|^{-b} |u|^{\alpha}|u-v|\right\|_{B'(L^2;I)}+c\left\| \chi_{B}|x|^{-b} |v|^{\alpha}|u-v|\right\|_{B'(L^2;I)}\\
&\leq c (T^{\theta_1}+T^{\theta_2}) \left( \|u\|^\alpha_{B(L^2;I)}+\|v\|^\alpha_{B(L^2;I)} \right) \|u-v\|_{B(L^2;I)}.
\end{split}
\end{equation*}
 Hence, if  $u,v\in S(a,T)$, inequality $\eqref{NT3}$ implies that
 \[
\begin{split}
\|G(u)-G(v)\|_{B\left(L^2;[-T,T]\right)} &\leq  2c (T^{\theta_1}+T^{\theta_2})a^\alpha \|u-v\|_{B\left(L^2;[-T,T]\right)}\\
&<  \frac{1}{2}\|u-v\|_{B\left(L^2;[-T,T]\right)},
\end{split}
\]
which means that $G$ is a contraction on $S(a,T)$. The contraction mapping principle then implies the existence of a unique solution. To finish the proof, we use standard arguments; thus
we omit the details.
\end{proof}

\subsection{Local Well-Posedness in $H^2$}
The goal of this subsection is to show the local well-posedness in $H^2(\mathbb{R}^N)$. Before doing that we establish useful estimates for the nonlinearity $F(x,u)=|x|^{-b}|u|^\alpha u$. To do so, we will use the Sobolev embedding (see Lemma \ref{SI}) according to the cases: $N\geq 5$, $N=4$ and $1\leq N\leq3$. More precisely, 
\[
H^2(\mathbb{R}^N)\hookrightarrow 
\begin{cases}
L^r(\mathbb{R}^N),\;\; \mbox{if}\;\; N\geq 5 \;\; \mbox{and} \;\; 2\leq r\leq \frac{2N}{N-4};\\
L^r(\mathbb{R}^N), \;\; \mbox{if}\;\; N=4 \;\; \mbox{and}\;\; r\geq 2;\\
L^\infty(\mathbb{R}^N), \;\;\mbox{if} \;\;N=1,2,3.
\end{cases}
\]

 Before stating the lemmas, we define the norm
\begin{equation}\label{NormaBCL}
 \|u\|_I=\| u\|_{B(L^2;I)}+\|\Delta u\|_{B(L^2;I)},
\end{equation}
where $I=[0,T]$.

\begin{lemma}\label{LLH210} Let $N\geq 1$, $0<b<\min\{\frac{N}{2},4\}$ and $0<\alpha<4^*$, where $4^*$ is defined in \eqref{def4*}. The following statement holds  
$$
\left \|\chi_{B^C}|x|^{-b}|u|^\alpha v \right\|_{B'(L^2;I)}+\left \|\chi_{B}|x|^{-b}|u|^\alpha v \right\|_{B'(L^2;I)}\leq c (T^{\theta_1}+T^{\theta_2})\|u\|^\alpha_{I} \|v\|_I,
$$
where $c,\theta_1,\theta_2 >0$.
\begin{proof} 
Let $B_1=\left\|\chi_{B^C}|x|^{-b}|u|^\alpha v\right\|_{B'(L^2;I)}$ and $B_2=\left\|\chi_{B}|x|^{-b}|u|^\alpha v\right\|_{B'(L^2;I)}$.
We consider two cases.

\ {\bf Case 1: $N\geq 5$}. First, we estimate $B_1$. Let $(q_0,r_0)$ be defined as
\begin{equation}\label{L1Hs1} 
q_0= \frac{8(\alpha+2)}{\alpha(N-4)} \;\;\; \textnormal{and} \;\; r_0=\frac{ N(\alpha+2)}{ N+ 2\alpha }.
 \end{equation}
 It is easily seen that $\frac{4}{q_0}=\frac{N}{2}-\frac{N}{r_0}$ and $r_0\geq2$. In addition, $r_0<\frac{2N}{N-4}$ is equivalent to $\alpha(N-8)<8$. This last inequality trivially holds if $N\leq8$. On the other hand, if $N>8$ our assumptions on $\alpha$ and $b$ implies $\alpha<\frac{8}{N-8}$. As a consequence, we obtain that $(q_0,r_0)$ is a $B$-admissible pair.
 
Note that $r_0<\frac{N}{2}$ (since $N>4$). Let $r_1$ be defined as
$$
\frac{N}{\alpha r_1}=\frac{N}{r_0}-2.
$$
An easy computation shows that $\frac{1}{r_0'}=\frac{1}{r_1}+\frac{1}{r_0}$. Hence, H\"older's inequality and  Sobolev's embedding \eqref{SEI}  imply
 \begin{align}\label{L1Hs11}
 B_1&\leq   \left \|\||x|^{-b}\|_{L^\infty(B^C)}\|u\|^\alpha_{L_x^{\alpha r_1}} \|v\|_{L^{r_0}_x}\right\|_{L^{q'_0}_I} \nonumber  \\
 &\leq c	\left \|  \|\Delta u\|^\alpha_{L_x^{r_0}} \|v\|_{L^{r_0}_x}\right\|_{L^{q'_0}_I} \nonumber   \\
  &\leq cT^{\frac{1}{q_1}} \|\Delta u\|^\alpha_{L_I^{q_0}L^{r_0}_x} \|v\|_{L^{q_0}_IL_x^{r_0}},\nonumber 
 \end{align}
where 
 \begin{equation*}\label{CLHs1} 
\frac{1}{q'_0}=\frac{1}{q_1}+\frac{\alpha}{q_0}+\frac{1}{q_0}.
\end{equation*}
Taking into account the definition of $q_0$ in \eqref{L1Hs1}, we deduce
\begin{equation}\label{CLHs12}
\frac{1}{q_1}= 1-\frac{\alpha+2}{q_0}= \frac{8-\alpha(N-4)}{8},
\end{equation}
which is positive by our hypothesis $\alpha<4^*$. Therefore, setting $\theta_1=\frac{1}{q_1}$ we deduce
\begin{equation}\label{LHsB1}
B_1 \leq c T^{\theta_1}\|\Delta u\|^\alpha_{B(L^2;I)} \|v\|_{B(L^2;I)}\leq c T^{\theta_1}\|u\|^\alpha_{I} \|v\|_I.
\end{equation} 	

 \ We now estimate $B_2$. To do this, we need to divide the argument into two cases.
 
 \ {\bf Case 1.1: $N\geq 8$}. Let $(q,r)$ be any $B$-admissible pair.
 It follows from H\"older's inequality and Sobolev embedding that
 \begin{equation}\label{a1}
\begin{split}
B_2&\leq \left\|\chi_B|x|^{-b}|u|^\alpha v\right\|_{L^{q'}_IL_x^{r'}} \leq \left \|\||x|^{-b}\|_{L^\gamma(B)} \|u\|^\alpha_{L_x^{\alpha r_1}} \|v\|_{L^r_x}\right\|_{L^{q'}_I}\\
&\leq \left \|\||x|^{-b}\|_{L^\gamma(B)} \|\Delta u\|^\alpha_{L_x^{r}} \|v\|_{L^r_x}\right\|_{L^{q'}_I}\\
&=  \||x|^{-b}\|_{L^\gamma(B)}T^{\frac{1}{q_1}} \|\Delta u\|^\alpha_{L_I^{q}L^r_x} \|v\|_{L^{q}_IL_x^r},
\end{split}
\end{equation}
where
\begin{equation}\label{CLHs2} 
\begin{cases}
\frac{1}{r'}=\frac{1}{\gamma}+\frac{1}{r_1}+\frac{1}{r},\\ 
2=\frac{N}{r}-\frac{N}{\alpha r_1}, \quad r<\frac{N}{2}, \\ 
\frac{1}{q'}=\frac{1}{q_1}+\frac{\alpha}{q}+\frac{1}{q}.
\end{cases}
\end{equation}
 From \eqref{CLHs2} we see that
\begin{equation}\label{CLHs3} 
\begin{cases}
\frac{N}{\gamma}= N-\frac{2N}{r}-\frac{N\alpha }{r}+2\alpha, \\
\frac{1}{q_1}=1-\frac{\alpha+2}{q}.
\end{cases}
\end{equation}
The first term in \eqref{a1} is finite provided that $\frac{N}{\gamma}>b$; but from the first equation in  \eqref{CLHs3} this is equivalent to $\alpha<\frac{(N-b)r-2N}{N-2r}$ (assuming $r<\frac{N}{2}$). Thus, taking into account our assumption $\alpha<4^*$, we see that is suffices to  choose, for instance, $r$ such that 
 $$
 \frac{(N-b)r-2N}{N-2r}=4^*.
 $$
Consequently, $r$ and $q$ are given by
\begin{equation}\label{APLHs}
 r=\frac{2N(N-b)}{N(N-4)-bN+16}\;\;\textnormal{and}\;\;q=\frac{2(N-b)}{N-4},
 \end{equation}
  where we have used that $(q,r)$ must be a $B$-admissible pair to compute the value of $q$. It is easy to see that with this choice we have $r<\frac{N}{2}$, since it is equivalent to $b<N-4$ (this is true because $N\geq 8$). In addition, from the second equation  in \eqref{CLHs3} and \eqref{APLHs} we also have
\begin{equation*}
\theta_2:=\frac{1}{q_1}=\frac{8-2b-\alpha(N-4)}{2(N-b)}>0,
\end{equation*}
because $\alpha<4^*$. Hence,
\begin{equation}\label{LHsB2} 
B_2 \leq c T^{\theta_2}\|\Delta u\|^\alpha_{B(L^2;I)} \|v\|_{B(L^2;I)}\leq cT^{\theta_2}\|u\|^\alpha_I \|v\|_I.
\end{equation}

 \ {\bf Case 1.2: $5\leq N\leq 7$}. Let us start by fixing the pair
 $$
 \left(q_\varepsilon,r_\varepsilon\right)=\left(\frac{8}{N-4-2\varepsilon},\frac{N}{2+\varepsilon}\right),
 $$
 where $\varepsilon>0$ is small it will be appropriately chosen later. Since $5\leq N\leq7$, it is easy to check that $(q_\varepsilon,r_\varepsilon)$ is  $B$-admissible, for any small $\varepsilon>0$. Now if $(q,r)$ is another $B$-admissible pair, Holder's inequality and Sobolev's embedding (note that $r_\varepsilon<\frac{N}{2}$) imply
 \begin{equation}\label{a2}
 \begin{split}
 B_2& \leq \left \|\||x|^{-b}\|_{L^\gamma(B)} \|u\|^\alpha_{L_x^{\alpha r_1}} \|v\|_{L^r_x}\right\|_{L^{q'}_I}\\
 &\leq \left \|\||x|^{-b}\|_{L^\gamma(B)} \|\Delta u\|^\alpha_{L_x^{r_\varepsilon}} \|v\|_{L^r_x}\right\|_{L^{q'}_I}\\
 &=  \||x|^{-b}\|_{L^\gamma(B)}T^{\frac{1}{q_1}} \|\Delta u\|^\alpha_{L_I^{q_\varepsilon}L^{r_\varepsilon}_x} \|v\|_{L^{q}_IL_x^r},
 \end{split}
 \end{equation}
 where
 \begin{equation}\label{CLHs21} 
 \begin{cases}
 \frac{1}{r'}=\frac{1}{\gamma}+\frac{1}{r_1}+\frac{1}{r},\\ 
 2=\frac{N}{r_\varepsilon}-\frac{N}{\alpha r_1}, \\ 
 \frac{1}{q'}=\frac{1}{q_1}+\frac{\alpha}{q_\varepsilon}+\frac{1}{q}.
 \end{cases}
 \end{equation}
 From \eqref{a2} we see that in order to complete the bound for $B_2$ it suffices to choose $(q,r)$ such that $\frac{N}{\gamma}>b$ and $\frac{1}{q_1}>0$. But, in view of \eqref{CLHs21},
 $$
 \frac{N}{\gamma}=N-\frac{2N}{r}-\alpha\varepsilon.
 $$
 Hence $\frac{N}{\gamma}>b$ is equivalent to $N-b>\frac{2N}{r}+\alpha\varepsilon$. Let us then choose $r$ given by
 $$
 r=\frac{2N}{N-b-2\alpha\varepsilon}.
 $$
By taking
 $
 q=\frac{8}{b+2\alpha\varepsilon}
 $
we now see that $(q,r)$ is $B$-admissible if $\varepsilon$ is chosen to be sufficiently small. In addition, since
$$
\frac{1}{q_1}=1-\frac{b+2\alpha\varepsilon}{4}-\frac{\alpha(N-4-2\varepsilon)}{8},
$$
we obtain that $\frac{1}{q_1}>0$ is equivalent to $\alpha<\frac{8-2b}{N-4+2\varepsilon}$, which certainly is true, for $\varepsilon$ sufficiently small, in view of our assumption $\alpha<4^*$.

\ {\bf Case 2: $N= 4$.} We use similar arguments as in the previous case. Let us start by estimating $B_2$. We have
\begin{equation}\label{LHsD21} 
B_2\leq \left\|\chi_B|x|^{-b}|u|^\alpha v\right\|_{L^{q'}_IL^{r'}_x} \leq T^{\frac{1}{q'}} \||x|^{-b}\|_{L^\gamma(B)} \|u\|^{\alpha}_{L^\infty_IL_x^{\alpha r_1}} \|v\|_{L^\infty_IL^2_x},
\end{equation}
where 
\begin{equation}\label{L<5}
\frac{1}{r'}=\frac{1}{\gamma}+\frac{1}{r_1}+\frac{1}{2}.
\end{equation}
Choose $r_1$ satisfying $\alpha r_1=\frac{2}{\delta}$, where $\delta \in (0,1)$ is sufficiently small.  Note that 
$$
\frac{4}{\gamma}=2-2\alpha\delta-\frac{4}{r}.
$$
Thus, for $\frac{4}{\gamma}>b$ it suffices to take\footnote{Observe that, since $b<2$ and $\delta$ is small we deduce that $2-b-2\delta \alpha>0$.} $r\in\left(\frac{4}{2-b-2\delta \alpha },+\infty\right)$.
 Therefore, from \eqref{LHsD21} and Sobolev's embedding,
\begin{equation}\label{L<53}
\begin{split}
B_2&\lesssim T^{\frac{1}{q'}}\|u\|^{\alpha}_{L^\infty_IH^{2-\frac{\delta}{2}}} \|v\|_{L^\infty_IL^2_x} 
\lesssim T^{\frac{1}{q'}}\|u\|^{\alpha}_{L^\infty_IH^{2}} \|v\|_{L^\infty_IL^2_x}
\lesssim T^{\theta_2}\|u\|^{\alpha}_I\|v\|_I,
\end{split}
\end{equation}
where we used that $(\infty,2)$ is $B$-admissible.

The idea to estimate $B_1$ is similar to that for $B_2$. Indeed, 
\begin{equation}\label{LHsD21a} 
B_1\ \leq T^{\frac{1}{q'}} \||x|^{-b}\|_{L^\gamma(B^C)} \|u\|^{\alpha}_{L^\infty_IL_x^{\alpha r_1}} \|v\|_{L^\infty_IL^2_x}.
\end{equation}
provided that \eqref{L<5} holds. With the same choice of $r_1$, we deduce that for $\frac{4}{\gamma}<b$ it suffices to choose  $r\in\left(2,\frac{4}{2-b-2\delta \alpha}\right)$, which implies $|x|^{-b}\in L^\gamma(B^C)$. Therefore,  the Sobolev embedding  implies 
\begin{equation*}
B_1\lesssim T^{\frac{1}{q'}} \|u\|^{\alpha}_{L_t^\infty H_x^{2}} \|v\|_{L_t^\infty L^2_x}\lesssim T^{\theta_1} \|u\|^{\alpha}_{I} \|v\|_{I}.
\end{equation*}

\ {\bf Case 3: $1\leq N\leq3$.}	The proof in this case is similar (and even easier) to that of Case 2, with the advantage that in view of Sobolev's embedding $L^\infty(\mathbb{R}^N)\hookrightarrow H^2(\mathbb{R}^N)$, we can take $r_1=\infty$. So we omit the details.
\end{proof}
\end{lemma}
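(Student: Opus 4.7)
The plan is to estimate the two pieces by Hölder's inequality, placing the singular weight $|x|^{-b}$ in a suitable Lebesgue space on the appropriate region, the factor $|u|^\alpha$ in an $L^{\alpha r_1}_x$ space whose index is reachable by Sobolev embedding from $H^{2,r}$, and $v$ in an $L^r_x$ space coming from a $B$-admissible pair. The bookkeeping amounts to matching one Hölder identity in space and one in time with the constraint $\frac{4}{q}=\frac{N}{2}-\frac{N}{r}$ of $B$-admissibility, while leaving a strictly positive power of $T$ from the temporal Hölder. Because the Sobolev step requires different embeddings depending on whether $N\ge5$, $N=4$, or $N\le 3$, I would split the argument accordingly.

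For the outside-ball piece, since $\chi_{B^c}|x|^{-b}\le 1$, I would bound
\[
B_1\le \bigl\|\,|u|^\alpha v\,\bigr\|_{L^{q'}_I L^{r'}_x}
\lesssim T^{\theta_1}\|u\|_{L^q_I L^{\alpha r_1}_x}^\alpha\,\|v\|_{L^q_I L^r_x},
\]
with $\frac{1}{r'}=\frac{\alpha}{\alpha r_1}+\frac{1}{r}$ and $\frac{1}{q'}=\frac{1}{q_1}+\frac{\alpha+1}{q}$, and would then choose the pair $(q,r)$ so that $(q,r)$ is $B$-admissible, $r<N/2$ (so that $H^{2,r}\hookrightarrow L^{\alpha r_1}$ via $\frac{N}{\alpha r_1}=\frac{N}{r}-2$), and $\theta_1=1-\frac{\alpha+2}{q}>0$. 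The last condition is equivalent to $\alpha<8/(N-4)$, which is ensured by $\alpha<4^*$. The natural candidate $r=N(\alpha+2)/(N+2\alpha)$ does the job (and its admissibility in the range $r<2N/(N-4)$ is forced by $\alpha<4^*$).

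For the inside-ball piece I would insert $\||x|^{-b}\|_{L^\gamma(B)}$, which is finite provided $\frac{N}{\gamma}>b$, and then rerun the Hölder/Sobolev scheme: this adds one more term $\frac{1}{\gamma}$ on the spatial side and forces us to shrink $r$ (so as to free up room for $\gamma$) while keeping $r<N/2$. In dimensions $N\ge 8$ the choice $r=2N(N-b)/(N(N-4)-bN+16)$, $q=2(N-b)/(N-4)$ satisfies all constraints simultaneously, with $\theta_2=(8-2b-\alpha(N-4))/(2(N-b))>0$. In dimensions $5\le N\le 7$ the previous $r$ is no longer $<N/2$ because $b<N-4$ fails, so I would instead freeze the $|u|^\alpha$ factor at a fixed $B$-admissible pair $(q_\varepsilon,r_\varepsilon)=(8/(N-4-2\varepsilon),N/(2+\varepsilon))$ with $\varepsilon>0$ small (here $r_\varepsilon<N/2$ automatically) and let the pair $(q,r)$ accommodating $v$ absorb the $1/\gamma$ slack; this gives $r=2N/(N-b-2\alpha\varepsilon)$ and $q=8/(b+2\alpha\varepsilon)$, $B$-admissible for $\varepsilon$ small, with positive temporal exponent provided $\alpha<(8-2b)/(N-4+2\varepsilon)$, which follows from $\alpha<4^*$ for small $\varepsilon$. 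For $N=4$ I would bypass the $L^r$-Sobolev embedding by using $H^{2-\delta/2}\hookrightarrow L^{2/\delta}$ for small $\delta>0$, and for $N\le 3$ I would use $H^2\hookrightarrow L^\infty$ directly, both making the argument considerably easier.

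The main obstacle I anticipate is the tight interplay in the inside-ball piece between three simultaneous requirements — $(q,r)$ being $B$-admissible, $r<N/2$ for the Sobolev step, and $\frac{N}{\gamma}>b$ with $\theta_2>0$ — and reconciling these in the borderline dimensions $5\le N\le 7$, where the naive choice fails. The $\varepsilon$-perturbation trick above is what I would use to overcome it; the restriction $0<b<N/2$ is precisely what keeps the feasible region non-empty.
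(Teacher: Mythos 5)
Your proposal is correct and follows essentially the same route as the paper: the same inside/outside-ball split, the same dimension-based case analysis ($N\ge 8$, $5\le N\le 7$, $N=4$, $N\le 3$), the same explicit choices of $B$-admissible pairs, the same $\varepsilon$-perturbation trick in dimensions $5\le N\le 7$, and the same $H^{2-\delta/2}\hookrightarrow L^{2/\delta}$ and $H^2\hookrightarrow L^\infty$ shortcuts for $N=4$ and $N\le 3$. The only point worth flagging is that in Case $N>8$ the admissibility check $r_0<2N/(N-4)$ reduces to $\alpha<8/(N-8)$, which requires using $b>0$ together with $\alpha<4^*$ (as the paper does), not $\alpha<4^*$ alone.
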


\ The next lemma provides an estimate of the derivative of $F(x,u)$ in the norm of $L^{2}_IL_x^{\frac{2N}{N+2}}$, the dual space of $L^{2}_IL_x^{\frac{2N}{N-2}}$.

\begin{lemma}\label{LLH21} Let $N\geq 3$, $0< b<\min\{\frac{N}{2},4\}$  and $\max\left\{0,\frac{2-2b}{N}\right\}<\alpha<4^*$, then the following statement holds  
$$
\left \|\nabla(|x|^{-b}|u|^\alpha u)\right \|_{L^{2}_IL_x^{\frac{2N}{N+2}}}\leq c (T^{\theta_1}+T^{\theta_2})\| u\|^{\alpha+1}_{I},
$$
where $c,\theta_1,\theta_2 >0$.
\begin{proof} 
Observe that
\begin{equation*}
\left \|\nabla(|x|^{-b}|u|^\alpha u)\right \|_{L^{2}_IL_x^{\frac{2N}{N+2}}} \leq C_1+C_2, 
 \end{equation*}
where 
\begin{align*}
 \hspace{1.2cm} C_1= &\left \|\nabla(|x|^{-b}|u|^\alpha u)\right \|_{L^{2}_IL_x^{\frac{2N}{N+2}}(B^C)}\;\textnormal{and}\;\; C_2=\left \|\nabla(|x|^{-b}|u|^\alpha u)\right \|_{L^{2}_IL_x^{\frac{2N}{N+2}}(B)}.
 \end{align*}
Now we divide the proof according to the dimension $N\geq5$ and $N=3,4$.

{\bf Case 1: $N\geq 5$.} First we estimate $C_2$. Let $C_{22}(t)=\left\|\nabla(|x|^{-b}|u|^\alpha u)\right\|_{L_x^{\frac{2N}{N+2}}(B)}$.
Applying H\"older's inequality, we deduce 
\begin{equation}\label{L1C10}
\begin{split}
\hspace{0.5cm} C_{22}(t)& \leq  \||x|^{-b}\|_{L^\gamma(B)} \|\nabla(|u|^\alpha u) \|_{L^{\beta}_x} + \|\nabla(|x|^{-b})\|_{L^d(B)}\|u\|^{\alpha +1}_{L^{(\alpha+1)e}_x},   \\
&\lesssim   \| |x|^{-b} \|_{L^\gamma(B)}  \|u\|^\alpha_{L_x^{\alpha r_1}}   \| \nabla u \|_{L_x^{r_2}} + \||x|^{-b-1}\|_{L^d(B)}\| \Delta u\|^{\alpha +1}_{L^r_x},   \\
&\lesssim  \| |x|^{-b} \|_{L^\gamma(B)} \| \Delta u \|^{\alpha}_{L_x^{r}}\|\Delta u \|_{L_x^r} + \||x|^{-b-1}\|_{L^d(B)}\| \Delta u\|^{\alpha +1}_{L^r_x},\\
&\lesssim  \| |x|^{-b} \|_{L^\gamma(B)} \| \Delta u\|^{\alpha +1}_{L^r_x} + \||x|^{-b-1}\|_{L^d(B)}\| \Delta u\|^{\alpha +1}_{L^r_x},
\end{split}
\end{equation}
where we also have used the Sobolev inequality. Here, we must have the  relations
\[
\begin{cases}
\frac{N+2}{2N}=\frac{1}{\gamma}+\frac{1}{\beta}=\frac{1}{d}+\frac{1}{e},\\ 
\frac{1}{\beta}=\frac{1}{r_1}+\frac{1}{r_2}, \\
2=\frac{N}{r}-\frac{N}{\alpha r_1}=\frac{N}{r}-\frac{N}{(\alpha+1)e}\;,\;\;\;\;r<\frac{N}{2},\\ 
1=\frac{N}{r}-\frac{N}{r_2},
\end{cases}
\]
which in turn are equivalent to 
\begin{equation}\label{L1C22}
\begin{cases} 
\frac{N}{\gamma}=\frac{N}{2}-\frac{N(\alpha+1)}{r}+2\alpha +2,\\ 
\frac{N}{d}=\frac{N}{2}-\frac{N(\alpha+1)}{r}+2\alpha +3.
\end{cases}
\end{equation}
In order to obtain that $\||x|^{-b}\|_{L^\gamma(B)}$ and $\||x|^{-b-1}\|_{L^d(B)}$ are finite, we need $\frac{N}{\gamma}>b$ and $\frac{N}{d}>b+1$, respectively. But from \eqref{L1C22} we see that $\frac{N}{\gamma}>b$ if and only if  $\frac{N}{d}>b+1$. Hence, it is sufficient to check that
$$
\frac{N}{2}-\frac{N(\alpha+1)}{r}+2\alpha +2>b.
$$
This last inequality is equivalent to $\alpha < \frac{r(N+4-2b)-2N}{2(N-2r)}$; thus we can choose $r$ such that
$$
\frac{r(N+4-2b)-2N}{2(N-2r)}=\frac{8-2b}{N-4}.
$$
Therefore,
\begin{equation}\label{BA}
r=\frac{2N(N+4-2b)}{N^2-2bN+16}\;\;\;\textnormal{and}\;\;\;q=\frac{2(N+4-2b)}{N-4},
\end{equation}
and we can easily see that $(q,r)$ is $B$-admissible and $r<\frac{N}{2}$ (here we need to use that $b<\frac{N}{2}$).
Moreover, from \eqref{L1C10} we obtain
\begin{equation}\label{L1C11}
C_{22}(t)\lesssim \| \Delta u\|^{\alpha +1}_{L^r_x}.
\end{equation}
Finally, \eqref{L1C11} and the H\"older inequality in the time variable yield
\begin{eqnarray*}
C_2=\|C_{22}(t)\|_{L^2_I} 
\lesssim T^{\frac{1}{q_1}} \|\Delta u\|^{\alpha+1}_{L_I^{q}L^r_x}
\lesssim T^{\frac{1}{q_1}} \|u\|^{\alpha+1}_I,
\end{eqnarray*}  
where
\begin{equation*}\label{L1C222}
\frac{1}{2}=\frac{1}{q_1}+\frac{\alpha+1}{q}.
\end{equation*}
From \eqref{BA}, the last inequality and our assumption on $\alpha$, we conclude that  
$$
\frac{1}{q_1}=\frac{8-2b-\alpha(N-4)}{2(N+4-2b)}>0.
$$
Hence the estimate for $C_2$ follows with  $\theta_2=\frac{1}{q_1}$.


 We now estimate $C_1$. Here we need to divide the proof according to $b\geq2$ and $b<2$.\\
 
{\bf Subcase $b\geq 2$.} Let $C_{11}(t)=\left\|\nabla(|x|^{-b}|u|^\alpha u)\right\|_{L_x^{\frac{2N}{N+2}}(B^C)}$. Arguing as in the term $C_{22}(t)$, we have
\begin{equation}\label{Lb>=2}
C_{11}(t)  \lesssim  \| |x|^{-b} \|_{L^\gamma(B^C)}  \|u\|^\alpha_{L_x^{\alpha r_1}}   \| \nabla u \|_{L_x^{r_2}} + \| |x|^{-b-1} \|_{L^d(B^C)}  \|u\|^\alpha_{L_x^{\alpha e_1}}   \| u \|_{L_x^{e_2}}
\end{equation}
with
\begin{equation}\label{Lb>=21}
\frac{N}{\gamma}=\frac{N+2}{2}-\frac{N}{r_1}-\frac{N}{r_2}\;\;\textnormal{and}\;\;\frac{N}{d}=\frac{N+2}{2}-\frac{N}{e_1}-\frac{N}{e_2}.
\end{equation}
In view of Sobolev's inequality, we then deduce
\begin{equation}\label{Lb>=22}
C_{11}(t)\lesssim | |x|^{-b} \|_{L^\gamma(B)} \| \Delta u \|^{\alpha}_{L_x^{r}}\|\Delta u \|_{L_x^r} + \||x|^{-b-1}\|_{L^d(B)}\| \Delta u\|^{\alpha}_{L^r_x}\| \nabla u\|_{L^r_x},
\end{equation}
where, from \eqref{Lb>=21},
\begin{equation}\label{Lb>=23}
\frac{N}{\gamma}=\frac{N}{d}-1=\frac{N+4\alpha+4}{2}-\frac{N(\alpha+1)}{r}.\end{equation}
For $\varepsilon>0$ small, by choosing the $B$-admissible pair $(q,r)$  defined by
$$
q=\frac{8(\alpha+1)}{\alpha(N-4)-2\varepsilon}\;\;\textnormal{and}\;\;r=\frac{2N(\alpha+1)}{N+4\alpha+2\varepsilon},
$$
we deduce, after some calculations, that $\frac{N}{\gamma}-b=\frac{N}{d}-1-b<0$ (because $b\geq 2$), which implies that $\| |x|^{-b} \|_{L^\gamma(B^C)}$ and $\||x|^{-b-1}\|_{L^d(B^C)}$ are finite. Thus,  
\begin{equation}\label{Lb>=24}
 C_1=\|C_{11}(t)\|_{L^2}\lesssim T^{\frac{1}{q_1}}\left( \|\Delta u\|^{\alpha+1}_{L_I^{q}L^r_x}+\|\Delta u\|^{\alpha}_{L_I^{q}L^r_x}\|\nabla u\|_{L_I^{q}L^r_x}\right) \lesssim T^{\theta_1}\|u\|^{\alpha+1}_{I}, 
\end{equation}
where we used the interpolation inequality  $\| \nabla u \|_{L^q_IL_x^{r}}\lesssim \| u \|^{1-\theta}_{L^q_IL_x^{r}}\| \Delta u \|^\theta_{L^q_IL_x^{r}}$. Here, $$
\theta_1=\frac{1}{q_1}=\frac{1}{2}-\frac{\alpha+1}{q}=\frac{4-\alpha(N-4)+2 \varepsilon}{8},$$ which is positive because $\alpha<\frac{8-2b}{N-4}$ and $b\geq 2$. \\

{\bf Subcase $b<2$.} The procedure is similar to that above. However, we need to divide the proof into five cases, because according to the range of the nonlinearity we need to choose different admissible pairs.

\ {\bf Case $A$: $\frac{4}{N-4}\leq \alpha<\frac{8-2b}{N-4}$.} Here, we choose the $B$-admissible pair $(q,r)=\left(\frac{8(\alpha+1)}{\alpha(N-4)-4},\frac{2N(\alpha+1)}{N+4\alpha+4}\right)$. The restriction on $\alpha$  ensures that $r\in \left[2,\frac{2N}{N-4}\right)$. Hence, by \eqref{Lb>=23} we obtain that $\frac{N}{\gamma}=\frac{N}{d}-1<b$. Therefore, from  \eqref{Lb>=22} and H\"older's inequality (because $\frac{1}{q_1}=\frac{1}{2}-\frac{\alpha+1}{q}=\frac{8-\alpha(N-4)}{8}$) we obtain \eqref{Lb>=24}.

\ {\bf Case $B$: $\frac{2}{N-4}\leq \alpha<\frac{4}{N-4}$.} Setting the $B$-admissible pair  $(q,r)=\left(\frac{8}{\alpha(N-4)-2},\frac{2N}{N+2-\alpha(N-4)}\right)$ and choosing $\alpha r_1=\alpha e_1=\frac{2N}{N-4}$, $r_2=e_2=r$ in \eqref{Lb>=21}, it follows that $\frac{N}{\gamma}-b<0$ and $\frac{N}{d}-b-1<0$.  Note also that since $\alpha\in \left[ \frac{2}{N-4},\frac{4}{N-4} \right)$ we have  $2\leq r<  \frac{2N}{N-4}$. Thus, \eqref{Lb>=2} yields
\begin{equation}\label{C11}
C_{11}(t)\lesssim \| u \|^{\alpha}_{L_x^{\frac{2N}{N-4}}}\left ( \|\nabla u \|_{L_x^r} + \| u \|_{L_x^r} \right). \end{equation}
 By using Sobolev's embedding and H\"older's inequality one has
\begin{equation}\label{C_1}
  C_1\lesssim  \| u \|^{\alpha}_{L^{\infty}_IL_x^{\frac{2N}{N-4}}}T^{\frac{1}{q_1}}\left ( \|\nabla u \|_{L^q_IL_x^r} + \| u \|_{L^q_IL_x^r}\right)\lesssim T^{\theta_1}\| u \|^{\alpha}_{L^{\infty}_IH_x^{2}}\|u\|_I\lesssim T^{\theta_1}\|u\|^{\alpha+1}_I.  
\end{equation}
Note that here we also have $\theta_1=\frac{1}{q_1}=\frac{1}{2}-\frac{1}{q}>0$. 

\ {\bf Case $C$: $ \frac{2}{N-2}\leq  \alpha<\frac{2}{N-4}$.} Taking into account the range of $\alpha$, we can choose $(q,r)=\left (\frac{8}{\alpha(N-2)-2},\frac{2N}{N+2-\alpha(N-2)}\right)$, $\alpha r_1=\alpha e_1=\frac{2N}{N-2}$, $r_2=e_2=r$.  Arguing as in Case B, we obtain \eqref{C_1}.

 \ {\bf Case $D$: $ \frac{2}{N}\leq  \alpha<\frac{2}{N-2}$.} In this case it suffices to choose $(q,r)=(\frac{8}{N\alpha-2},\frac{2N}{N+2-N\alpha})$ and $\alpha r_1=\alpha e_1=2$, $r_2=e_2=r$, and proceed as above. 
 
 \ {\bf Case $E$: $\max\left\{0,\frac{2(1-b)}{N}\right\}<  \alpha<\frac{2}{N}$.}
 Finally, if $\alpha r_1=\alpha e_2=2$ and $r_2=e_2=2$, then $\frac{N}{\gamma}=\frac{N}{d}=1-\frac{N\alpha}{2}$. Thus, since $\alpha \in \left(\max\left\{0,\frac{2(1-b)}{N}\right\},\frac{2}{N}\right)$  we deduce $\frac{N}{\gamma}-b<0$, which also implies $\frac{N}{d}-b-1<0$. Therefore, from \eqref{Lb>=2} and \eqref{Lb>=24}, $$C_1\lesssim T^{\frac{1}{2}}\|u\|^{\alpha+1}_{L^\infty_IH^2_x}\lesssim T^{\frac{1}{2}}\|u\|^{\alpha+1}_I.$$ 
 
Note that Cases $A$-$E$ cover the range $\left(\max\left\{0,\frac{2(1-b)}{N}\right\},\frac{8-2b}{N-4}\right)$ for the parameter $\alpha$. Hence, the proof of the lemma is completed in the case  $N\geq 5$.

\

{\bf Case 2: $N=3,4$.} Following the notation in Case 1, we split integration in space on $B$ and $B^C$, obtaining terms $C_1$ and $C_2$. We start by estimating  $C_2$. If $(q,r)$ is any $B$-admissible pair, H\"older's inequality implies
\begin{equation}\label{L<54} 
\begin{split}
C_2 &\leq  \left \| \| |x|^{-b} \|_{L^\gamma(B)}  \|u\|^\alpha_{L_x^{\alpha r_1}}   \| \nabla u \|_{L_x^{r}}\right\|_{L^2_I} + \left \|\||x|^{-b-1}\|_{L^d(B)}\| u\|^{\alpha}_{L^{\alpha e}_x}\|u\|_{L^r_x}\right\|_{L^2_I}\\
&\lesssim   \| |x|^{-b} \|_{L^\gamma(B)}\|u\|^\alpha_{L^\infty_IL_x^{\alpha r_1}}T^{\frac{1}{q_1}}  \| \nabla u \|_{L^q_IL_x^{r}} + \||x|^{-b-1}\|_{L^d(B)}\|u\|^{\alpha}_{L^\infty_IL^{\alpha e}_x}T^{\frac{1}{q_1}}\|u\|_{L^q_IL^r_x}\\
&\lesssim T^{\frac{1}{q_1}}\left( \| |x|^{-b} \|_{L^\gamma(B)}\|u\|^\alpha_{L^\infty_IL_x^{\alpha r_1}}  \| u \|_{I} + \||x|^{-b-1}\|_{L^d(B)}\|u\|^{\alpha}_{L^\infty_IL^{\alpha e}_x}\|u\|_{I}\right),
\end{split}
\end{equation}
where 
\begin{equation}\label{n12}
\frac{N}{\gamma}-b=\frac{N+2-2b}{2}-\frac{N}{r_1}-\frac{N}{r}, \qquad\frac{N}{d}-b-1=\frac{N-2b}{2}-\frac{N}{e}-\frac{N}{r},
\end{equation}
and
 $$\frac{1}{2}=\frac{1}{q_1}+\frac{1}{q}.$$ First we choose $\alpha r_1=\alpha e=\frac{2}{\delta}$ with $\delta \in (0,1)$ if $N=4$ and $\delta=0$ if $N=3$. Next, for $\varepsilon>0$ small we choose $(q,r)=\left(\frac{8}{N(1-\varepsilon)},\frac{2}{\varepsilon}\right)$, which is $B$-admissible in dimension $N\leq4$. With these choices in hand,
$$
\frac{N}{\gamma}-b=\frac{N+2-2b-\delta \alpha N-\varepsilon N}{2}\;\;\;\textnormal{and}\;\;\;\frac{N}{d}-b-1=\frac{N-2b-\delta \alpha N -\varepsilon N}{2},
$$
which are positive in view of our assumption $b<\frac{N}{2}$. Therefore, $|x|^{-b}\in L^\gamma(B)$ and $|x|^{-b-1}\in L^d(B)$. Furthermore, from \eqref{L<54}, noting that since $\alpha r_1>2$,  $\alpha e>2$ and using the Sobolev embedding,  we obtain 
\begin{equation*}
C_2 \lesssim   T^{\theta_2} \|u\|^\alpha_{L^\infty_IH_x^{2}}  \|  u \|_{I}\lesssim T^{\theta_2} \|u\|^{\alpha+1}_{I},
\end{equation*}
where $\theta_2=\frac{1}{q_1}>0$, taking into account that $q>2$.

Now we estimate $C_1$. Indeed, repeating the same argument to obtain \eqref{L<54} and choosing $\alpha r_1=2=\alpha e$ we get
\begin{equation*}
C_1\lesssim T^{\frac{1}{q_1}}\left( \| |x|^{-b} \|_{L^\gamma(B^C)}\|u\|^\alpha_{L^\infty_IL_x^{2}}  \| u \|_{I} + \||x|^{-b-1}\|_{L^d(B^C)}\|u\|^{\alpha}_{L^\infty_IL^{2}_x}\|u\|_{I}\right),
\end{equation*}
and 
$$
\frac{N}{\gamma}-b=\frac{N+2-2b-\alpha N}{2}-\frac{N}{r}\;\;\;\textnormal{and}\;\;\;\frac{N}{\gamma}-b-1=\frac{N-2b-\alpha N}{2}-\frac{N}{r}.
$$
To obtain $\frac{N}{\gamma}-b<0$ and $\frac{N}{\gamma}-b-1<0$ it  suffices  to  take the $B$-admissible pair such that  $r\in\left (2,\frac{2N}{N+2-2b-\alpha N}\right)$. Note that our assumption $\alpha >\frac{2(1-b)}{N}$ gives $\frac{2N}{N+2-2b-\alpha N}>2$. Hence, $\||x|^{-b}\|_{L^\gamma(B^C)}$ and $\||x|^{-b-1}\|_{L^d(B^C)}$ are finite and 
\begin{equation*}
C_1 \lesssim T^{\theta_1} \|u\|^\alpha_{L^\infty_IL_x^{2}}  \|  u \|_{I}\lesssim T^{\theta_1} \|u\|^{\alpha+1}_{I},
\end{equation*}
with   $\theta_1=\frac{1}{q_1}=\frac{1}{2}-\frac{1}{q}>0$.

The proof of the lemma is then completed.
\end{proof}
\end{lemma}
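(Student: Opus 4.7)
My plan is to bound the pointwise gradient via the product rule,
\[ \bigl|\nabla(|x|^{-b}|u|^\alpha u)\bigr| \lesssim |x|^{-b-1}|u|^{\alpha+1} + |x|^{-b}|u|^\alpha|\nabla u|, \]
and estimate each term in $L^2_I L^{\frac{2N}{N+2}}_x$ by splitting the spatial domain into the unit ball $B$ and its complement $B^C$, yielding pieces $C_2$ and $C_1$ respectively. On each region I would apply H\"older's inequality in space with three factors: the weight ($|x|^{-b}$ or $|x|^{-b-1}$) in $L^\gamma$, the high-power factor $|u|^\alpha$ in some $L^{\alpha r_1}_x$, and the remaining $\nabla u$ (or $u$) in $L^r_x$, where $(q,r)$ is a carefully chosen $B$-admissible pair. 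Sobolev embedding then controls $\||u|^\alpha\|_{L^{\alpha r_1}_x}$ by a power of $\|\Delta u\|_{L^r_x}$ when $N\geq 5$; in low dimensions I would appeal to $H^2\hookrightarrow L^\infty$ (for $N\leq 3$) or $H^2\hookrightarrow L^p$ for all $p<\infty$ (for $N=4$), enabling me to pull out a $\|u\|_{L^\infty_I H^2_x}^\alpha$ factor. Finally, H\"older in time extracts $T^{1/q_1}$ with $\frac{1}{q_1}=\frac{1}{2}-\frac{\alpha+1}{q}$, assembling the desired bound in terms of $\|u\|_I$ as defined in \eqref{NormaBCL}.

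The crucial point is the selection of $(q,r)$. For the interior piece $C_2$ with $N\geq 5$ I expect the optimal choice to be
\[ r=\frac{2N(N+4-2b)}{N^2-2bN+16}, \qquad q=\frac{2(N+4-2b)}{N-4}, \]
which is $B$-admissible and satisfies $r<N/2$ (thanks to $b<N/2$). The exponent bookkeeping then gives $\frac{N}{\gamma}=\frac{N}{2}+2(\alpha+1)-\frac{N(\alpha+1)}{r}$; the finiteness condition $\frac{N}{\gamma}>b$ is equivalent to $\alpha<\frac{8-2b}{N-4}=4^*$, which is our hypothesis, and the same computation automatically delivers $\frac{N}{d}>b+1$ and $\theta_2:=1/q_1>0$. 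For $N=3,4$ the computation is simpler: choosing $\alpha r_1=2/\delta$ with small $\delta>0$ and $(q,r)=(8/(N(1-\varepsilon)),2/\varepsilon)$ lets $b<N/2$ guarantee $|x|^{-b-1}\in L^d(B)$ and yields the required positive power of $T$.

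The main obstacle is the exterior region $C_1$. There the weight $|x|^{-b}$ \emph{decays} rather than blows up, so $|x|^{-b}\in L^\gamma(B^C)$ requires the \emph{reverse} inequality $\frac{N}{\gamma}<b$. This flips the sign constraints on the pair and makes a single admissible choice insufficient: I anticipate having to split $\alpha\in(0,4^*)$ into several subcases along the thresholds $\tfrac{2}{N},\tfrac{2}{N-2},\tfrac{2}{N-4},\tfrac{4}{N-4}$, each requiring its own $B$-admissible pair compatible with $\frac{N}{\gamma}-b<0$ (and $\frac{N}{d}-b-1<0$), and further separating $b\geq 2$ (where the choice is nearly unconstrained because $|x|^{-b-1}$ already decays at infinity) from $b<2$. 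The lower threshold $\alpha>\frac{2(1-b)}{N}$ in the hypothesis arises precisely in the smallest subrange $\alpha<2/N$ with $b<1$: combining admissibility $r>2$ with both $\frac{N}{\gamma}<b$ and $\frac{N}{d}<b+1$ is possible if and only if $\frac{2N}{N+2-2b-\alpha N}>2$, i.e.\ $\alpha>\frac{2(1-b)}{N}$. Once this case analysis is carried out in parallel with that of Lemma \ref{LLH210}, the estimates for $C_1$ and $C_2$ combine to give the stated bound.
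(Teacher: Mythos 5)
Your proposal mirrors the paper's argument closely: the same pointwise gradient bound, the same $B/B^C$ split, the same Hölder--Sobolev bookkeeping with the same choice of $B$-admissible pair for the interior term $C_2$ and for $N=3,4$, and the same recognition that the exterior term $C_1$ forces a subcase decomposition along $\tfrac{2}{N},\tfrac{2}{N-2},\tfrac{2}{N-4},\tfrac{4}{N-4}$ (separating $b\geq 2$ from $b<2$), with the lower bound $\alpha>\tfrac{2(1-b)}{N}$ emerging from the smallest $\alpha$-range. The only quibble is cosmetic: in the paper's Case E for $N\geq 5$ the pair is fixed with $r=2$ rather than chosen from the interval $(2,\tfrac{2N}{N+2-2b-\alpha N})$ (that interval arises in the $N=3,4$ exterior estimate), but the mechanism producing the lower threshold is the same one you identify.
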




\ We now have all tools to prove the main result of this section, Theorem \ref{LWPH2}. 

\begin{proof}[\bf{Proof of Theorem \ref{LWPH2}}]	
We use the contraction mapping principle again. To do so, we define $$
X= C\left([-T,T];H^2(\mathbb{R}^N)\right)\bigcap L^q\left([-T,T];H^{2,r}(\mathbb{R}^N)\right),$$ where $(q,r)$ is any $B$-admissible pair and $T>0$ will be determined properly later. Also, in $X$ we define the norm
\begin{equation*}\label{NHs} 
\|u\|_T=\|u\|_{B\left(L^2;[-T,T]\right)}+\|\Delta u\|_{B\left(L^2;[-T,T]\right)}.
\end{equation*}
We shall show that the mapping $G$ defined in \eqref{OPERATOR} is a contraction on the complete metric space
\begin{equation*}
S(a,T)=\{u \in X : \|u\|_T\leq a \}
\end{equation*}
with the metric 
$$
d_T(u,v)=\|u-v\|_{B\left(L^2;[-T,T]\right)},
$$
for a suitable choice of the parameters $a$ and $T$.

\ As in the proof of Theorem \ref{LWPL2}, without loss of generality we consider only the case $t > 0$. Note that, in particular, we have $\|\cdot\|_T=\|\cdot\|_I$. Let us first show that $G$ is well defined from $S(a,T)$ to $S(a,T)$. Indeed, if $F(x,u)=|x|^{-b}|u|^\alpha u$, following the same arguments as in the proof of Theorem \ref{LWPL2}, we obtain
\begin{equation}\label{NCD0} 
\begin{split} 
\|G(u)\|_{B\left(L^2;I\right)}&\leq c\|u_0\|_{L^2}+ \left \|\chi_{B^C}F(x,u) \right\|_{B'(L^2;I)}+\left \|\chi_{B}F(x,u) \right\|_{B'(L^2;I)}\\
&\leq  c\|u_0\|_{L^2}+ c(T^{\theta_1}+T^{\theta_2}) \| u \|^{\alpha+1}_T,
\end{split}
\end{equation}
where in the last inequality we used Lemma \ref{LLH210}. Also, from \eqref{EstimativaImportante} and Lemma \ref{LLH21},
\begin{equation}\label{NCD}  
\begin{split}
\|\Delta G(u)\|_{B\left(L^2;I\right)}&\leq c \|\Delta u_0\|_{L^2}+ c \|F(x,u)\|_{L^2_IL_x^{\frac{2N}{N+2}}}\\
&\leq c \|\Delta u_0\|_{L^2}+c(T^{\theta_1}+T^{\theta_2}) \| u \|^{\alpha+1}_T.
\end{split}
\end{equation}
By combining \eqref{NCD0} and \eqref{NCD}, we see that if $u \in S(a,T)$, then
\begin{equation*}\label{NI}
\|G(u)\|_T\leq c \|u_0\|_{H^2}+c (T^{\theta_1}+T^{\theta_2}) a^{\alpha+1}.
\end{equation*}
Consequently, by choosing $a=2c\|u_0\|_{H^2}$ and $T>0$ such that 
\begin{equation}\label{CTHs} 
c a^{\alpha}(T^{\theta_1}+T^{\theta_2}) < \frac{1}{4},
\end{equation}
we obtain $G(u)\in S(a,T)$. Hence, $G$ is well defined on $S(a,T)$.

\ To prove that $G$ is a contraction on $S(a,T)$ with respect to the metric $d_T$ we use \eqref{FEI} and Lemma \ref{LLH210} to deduce
\[
\begin{split}
d_T(G(u),G(v))&\leq   c\left\| \chi_{B^C}\left( F(x,u)-F(x,v)\right) \right\|_{B'\left(L^2;I\right)}
+\left\| \chi_B\left( F(x,u)-F(x,v)\right) \right\|_{B'\left(L^2;I\right)}\\ 
&\leq  c (T^{\theta_1}+T^{\theta_2})\left(\|u\|^\alpha_T+\|v\|^\alpha_T\right)d_T(u,v),
\end{split}
\]
So, for $u,v\in S(a,T)$, we get
$$
 d_T(G(u),G(v))\leq c (T^{\theta_1}+T^{\theta_2})a^\alpha d_T(u,v).
$$
Therefore, from \eqref{CTHs}, $G$ is a contraction on $S(a,T)$ and by the contraction mapping principle we have a unique fixed point $u \in S(a,T)$ of $G$. This completes the proof of the theorem.
\end{proof}
		

\section{\bf Global Well-Posedness and Scattering}\label{secglo}
	
\ The goal of this section is to study the global well-posedness of the Cauchy problem \eqref{IBNLS}. 

\subsection{Global Well-Posedness in $L^2$}
The global well-posedness result in $L^2(\mathbb{R}^N)$ (Theorem \ref{GWPL2}) is an immediate consequence of Theorem \ref{LWPL2}. Indeed, using \eqref{NT3} we obtain that the existence time depends only on the $L^2$ norm of the initial data, that is, $ T:=T(\|u_0\|_{L^2})= \frac{c}{\|u_0\|_{L^2}^d}$, for some constants $c,d>0$. Hence, the conservation law \eqref{mass} allows us to reapply Theorem \ref{LWPL2} as many times as we wish preserving the length of the time interval. This gives us the global solution.

\subsection{Global Well-Posedness in $H^2$}
In this subsection, we turn our attention to prove Proposition \ref{glosub} and Theorem \ref{GWPH2}. Before proving Proposition \ref{glosub} we recall the following version of the Caffarelli-Kohn-Nirenberg-type inequality.

\begin{lemma}\label{k3lem}
Assume  $N\geq1$, $0<b<\min\left\{N,4\right\}$, and $0<\alpha<4^*$. Then
$$
\int_{\mathbb{R}^N}|x|^{-b}|u|^{\alpha+2}dx\lesssim  \|\Delta u\|_{L^2}^{\frac{N\alpha+2b}{4}}\| u\|_{L^2}^{\alpha+2-\frac{N\alpha+2b}{4}}.
$$
\end{lemma}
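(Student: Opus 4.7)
The inequality is a Caffarelli--Kohn--Nirenberg (CKN) / Hardy--Sobolev estimate. I would prove it by combining a single Hardy--Sobolev embedding at the ``right'' Sobolev index with the standard interpolation between $L^2$ and $\dot H^2$. The scaling analysis at the beginning of the Introduction already dictates what that index must be: under $u(x)\mapsto u(\mu x)$ the left-hand side (raised to $1/(\alpha+2)$) scales like $\mu^{-(N-b)/(\alpha+2)}$, whereas $\|u\|_{\dot H^s}$ scales like $\mu^{s-N/2}$; matching these forces
\[
s \;=\; \frac{N\alpha+2b}{2(\alpha+2)}.
\]

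The first step is to establish the weighted Sobolev inequality
\[
\left(\int_{\mathbb{R}^N}|x|^{-b}|u|^{\alpha+2}\,dx\right)^{\!\frac{1}{\alpha+2}} \;\lesssim\; \|u\|_{\dot H^s},
\]
which is a standard Hardy--Sobolev inequality (a special case of the Stein--Weiss / CKN embedding). To apply it one needs $0\le \tfrac{b}{\alpha+2}<s<\tfrac{N}{2}$, and all three conditions follow from the hypotheses $0<b<\min\{N,4\}$ and $\alpha>0$: $s>0$ is clear, $s<N/2$ is equivalent to $b<N$, and $\tfrac{b}{\alpha+2}<s$ is equivalent to $N\alpha>0$. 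If one does not wish to quote the Stein--Weiss theorem, the embedding can be produced self-contained by splitting $\mathbb{R}^N=B\cup B^C$ and using H\"older on $B$ with the weak $L^{N/b}$ estimate for $|x|^{-b}$ (which is sharp and avoids the borderline loss incurred by a plain $L^\gamma$ bound for $\gamma<N/b$), together with the plain Sobolev embedding on $B^C$.

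The second step is the interpolation
\[
\|u\|_{\dot H^s} \;\leq\; \|\Delta u\|_{L^2}^{s/2}\,\|u\|_{L^2}^{1-s/2},
\qquad 0\leq s \leq 2.
\]
The upper bound $s\leq 2$ is equivalent to $(N-4)\alpha \le 8-2b$, i.e.\ $\alpha \le 4^*$, and is therefore granted by the assumption $\alpha<4^*$ (vacuous when $N\leq 4$). Raising the composite estimate to the $(\alpha+2)$-th power gives
\[
\int_{\mathbb{R}^N}|x|^{-b}|u|^{\alpha+2}\,dx \;\lesssim\; \|\Delta u\|_{L^2}^{(\alpha+2)s/2}\,\|u\|_{L^2}^{(\alpha+2)(1-s/2)} \;=\; \|\Delta u\|_{L^2}^{\frac{N\alpha+2b}{4}}\,\|u\|_{L^2}^{\alpha+2-\frac{N\alpha+2b}{4}},
\]
which is the claim.

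The only delicate point is the Hardy--Sobolev inequality at the critical Lebesgue exponent $\alpha+2$; everything else is algebraic bookkeeping of the scaling. A direct splitting proof using only strong-$L^\gamma$ bounds for $|x|^{-b}$ gives an $\varepsilon$-loss in the exponent of $\|\Delta u\|_{L^2}$, and to recover the sharp exponent $(N\alpha+2b)/4$ one either works in Lorentz spaces or invokes the classical CKN/Stein--Weiss result.
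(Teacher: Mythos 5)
Your argument is correct. The paper itself gives no in-text proof of this lemma; it simply cites the relevant Caffarelli--Kohn--Nirenberg-type estimate from Lin's paper, so there is no argument of the authors' own to compare against. Your reconstruction---Hardy--Sobolev (Stein--Weiss) embedding at the scale-critical index $s=\frac{N\alpha+2b}{2(\alpha+2)}$, followed by the Plancherel interpolation $\|u\|_{\dot H^s}\le\|\Delta u\|_{L^2}^{s/2}\|u\|_{L^2}^{1-s/2}$---is the standard derivation of such CKN inequalities, and the exponent bookkeeping checks out: the Hardy--Sobolev condition $0\le b<2s<N$ reduces exactly to $b<N$ and $\alpha>0$, and $s\le 2$ reduces to $\alpha\le 4^*$, which is granted. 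You are also right that a naive strong-$L^\gamma$ splitting of $|x|^{-b}$ loses an $\varepsilon$ in the exponent, so invoking Stein--Weiss (or a Lorentz-space version of H\"older) is genuinely necessary to hit the sharp power $\frac{N\alpha+2b}{4}$.
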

\begin{proof}
See \cite[page 1516]{Lin}.
\end{proof}

\begin{proof}[{\bf Proof of Proposition \ref{glosub}}]
As in the case of $L^2$-solutions, the existence time obtained in Theorem \ref{LWPH2} depends only the $H^2$ norm of the initial data. Hence, to obtain a global solution it is sufficient to get an a priori bound of the local solution. To do so, from Lemma  \ref{k3lem}, the conservation of the mass and the energy, we obtain
	\[
	\begin{split}
	\|\Delta u(t)\|_{L^2}^2&=2E[u_0]+c\int|x|^{-b}|u(t)|^{\alpha+2}dx\\
	&\leq 2E[u_0]+c\|\Delta u(t)\|_{L^2}^{\frac{\alpha N+2b}{4}} \|u_0\|_{L^2}^{\alpha+2-\frac{\alpha N+2b}{4}}
	\end{split}
	\]
	If $\alpha<\frac{8-2b}{N}$ the above inequality promptly implies that the Laplacian of $u$ remains bounded, as long as the local solution exists, which in turn implies the global existence. On the other hand, if $\alpha=\frac{8-2b}{N}$ we deduce that
	$$
	(1-c\|u_0\|_{L^2}^\alpha)\|\Delta u(t)\|_{L^2}^2\leq 2E[u_0].
	$$
	Hence, the Laplacian of $u$ remains bounded if $\|u_0\|_{L^2}\leq c^{-1/\alpha}$, which completes the proof of the proposition.
\end{proof}

 Next we turn attention to the proof of Theorem \ref{GWPH2}.  Its core  is to establish suitable estimates on the nonlinearity $F(x,u)=|x|^{-b}|u|^\alpha u$.

\begin{lemma}\label{lemmaglobal1} 
Let $N\geq 3$ and $0<b<\min\{\frac{N}{2},4\}$. If $ \frac{8-2b}{N}<\alpha<4^*$ then the following statements hold:
\begin{itemize}
\item [(i)] $\left \|\chi_B|x|^{-b}|u|^\alpha v \right\|_{B'(\dot{H}^{-s_c})}+
\left \|\chi_{B^C}|x|^{-b}|u|^\alpha v \right\|_{B'(\dot{H}^{-s_c})} \leq c\| u\|^{\theta}_{L^\infty_tH^2_x}\|u\|^{\alpha-\theta}_{B(\dot{H}^{s_c})} \|v\|_{B(\dot{H}^{s_c})};
$
\item [(ii)] $\left\|\chi_B|x|^{-b}|u|^\alpha v \right\|_{B'(L^2)} + \left\|\chi_{B^C}|x|^{-b}|u|^\alpha v \right\|_{B'(L^2)} 
\leq c\| u\|^{\theta}_{L^\infty_tH^2_x}\|u\|^{\alpha-\theta}_{B(\dot{H}^{s_c})} \| v\|_{B(L^2)}$,
\end{itemize} 
where $c>0$ and $\theta\in (0,\alpha)$ is a sufficiently small number.	
\begin{proof} Before starting the proof we define following numbers
	\begin{equation}\label{PHsA11}  
\widehat{q}=\frac{8\alpha(\alpha+2-\theta)}{\alpha(N\alpha+2b)-\theta(N\alpha-8+2b)}\;\;\;\widehat{r}\;=\;\frac{N\alpha(\alpha+2-\theta)}{\alpha(N-b)-\theta(4-b)}
\end{equation}
and 
\begin{align}\label{PHsA22}
\widetilde{a}\;=\;\frac{4\alpha(\alpha+2-\theta)}{\alpha[N(\alpha+1-\theta)-4+2b]-(8-2b)(1-\theta)}\;\;\;  \widehat{a}=\frac{4\alpha(\alpha+2-\theta)}{8-2b-(N-4)\alpha},
\end{align}
 It is easily seen that, for $\theta$ sufficiently small, $(\widehat{q},\widehat{r})\in \mathcal{B}_0$, $(\widehat{a},\widehat{r})\in \mathcal{B}_{s_c}$,  $(\widetilde{a},\widehat{r})\in \mathcal{B}_{-s_c}$, and
 \begin{equation}\label{PHsA22.1}
\frac{1}{\widetilde{a}'}=\frac{\alpha-\theta}{\widehat{a}}+\frac{1}{\widehat{a}}, \quad \mbox{and}\quad  \frac{1}{\widehat{q}'}=\frac{\alpha-\theta}{\widehat{a}}+\frac{1}{\widehat{q}}.
 \end{equation}

Let us prove (i). To do so, let $A\subset \mathbb{R}^N$ denote either $B$ or $B^C$. It is then sufficient to estimate $\left \|\chi_A|x|^{-b}|u|^\alpha v \right\|_{B'(\dot{H}^{-s_c})}$. Clearly, we have
 $\left \|\chi_A|x|^{-b}|u|^\alpha v \right\|_{B'(\dot{H}^{-s_c})}\leq \left \|\chi_A|x|^{-b}|u|^\alpha v \right\|_{L^{\widetilde{a}'}_tL^{\widehat{r}'}_x}$. But, from  H\"older's inequality one has
 \begin{equation}\label{LG1Hs1}
\begin{split}
\left \| \chi_A |x|^{-b}|u|^\alpha v \right\|_{L^{\widehat{r}'}_x} &\leq \||x|^{-b}\|_{L^\gamma(A)}  \|u\|^{\theta}_{L^{\theta r_1}_x}   \|u\|^{\alpha-\theta}_{L_x^{(\alpha-\theta)r_2}}  \|v\|_{L^{\widehat{r}}_x}  \\
&=\||x|^{-b}\|_{L^\gamma(A)}  \|u\|^{\theta}_{L^{\theta r_1}_x}   \|u\|^{\alpha-\theta}_{L_x^{\widehat{r}}} \|v\|_{L^{\widehat{r}}_x},
\end{split}
\end{equation}
where
\begin{equation}\label{LG1Hs2}
\frac{1}{\widehat{r}'}=\frac{1}{\gamma}+\frac{1}{r_1}+\frac{1}{r_2}+\frac{1}{\widehat{r}}\;\;\textnormal{and}\;\;\widehat{r}=(\alpha-\theta)r_2.
\end{equation}
Observe that \eqref{LG1Hs2} implies
\begin{equation*}
	\frac{N}{\gamma}=N-\frac{N(\alpha+2-\theta)}{\widehat{r}}-\frac{N}{r_1},
\end{equation*}
and from \eqref{PHsA11} it follows that
\begin{equation}\label{LG1Hs3}
\frac{N}{\gamma}-b=\frac{\theta(4-b)}{\alpha}-\frac{N}{r_1}.
\end{equation}

Now, we make use of the Sobolev embedding (Lemma \ref{SI}), so we consider three cases: $N\geq 5$, $N=4$ and $N=3$.

\ {\bf Case $N\geq 5$.} 
If $A=B$ we  choose $\theta r_1=\frac{2N}{N-4}$, so that $\frac{N}{\gamma}-b=\theta(2-s_c)>0$ (recall that $s_c<2$). On the other hand, if $A=B^C$ we choose $\theta r_1=2$, so that $\frac{N}{\gamma}-b=-\theta s_c<0$. Thus, in both cases the quantity $\||x|^{-b}\|_{L^\gamma(A)}$ is finite and, by Sobolev embedding, $H^2\hookrightarrow L^{\theta r_1}$. Therefore, from \eqref{LG1Hs1},
\begin{equation}\label{LG1Hs41}
\left \| \chi_A |x|^{-b}|u|^\alpha v \right\|_{L^{\widehat{r}'}_x} \lesssim \|u\|^{\theta}_{H^2_x}  \|u\|^{\alpha-\theta}_{L_x^{\widehat{r}}} \|v\|_{L^{\widehat{r}}_x}.
\end{equation}
An application of H\"older's inequality in time, taking into account \eqref{PHsA22.1}, now gives
\begin{equation}
\begin{split}\label{LG1Hs42}
\left \|\chi_A  |x|^{-b}|u|^\alpha v \right\|_{L_t^{\widetilde{a}'}L^{\widehat{r}'}_x}
&\lesssim \|u\|^{\theta}_{L^\infty_tH^2_x} \|u\|^{\alpha-\theta}_{ L_t^{\widehat{a}} L_x^{\widehat{r}}} \|v\|_{L^{\widehat{a}}_tL^{\widehat{r}}_x}   \\
&\lesssim \| u\|^{\theta}_{L^\infty_tH^2_x}\|u\|^{\alpha-\theta}_{B(\dot{H}^{s_c})} \|v\|_{B(\dot{H}^{s_c})},
\end{split}
\end{equation}
which completes the proof in this case.

\ {\bf Case $N=4$.} Following the same strategy as in Case $N\geq5$, it suffices to choose  $\theta r_1$ such that $\||x|^{-b}\|_{L^\gamma(A)}$ is finite and  $H^2\hookrightarrow L^{\theta r_1}$. Since $\alpha>\frac{8-2b}{N}$ we deduce that $\frac{N\alpha}{4-b}>2$. Thus, if $A=B$, by choosing
\begin{equation*}
\theta r_1\in \left(\frac{N\alpha}{4-b},+\infty\right),
\end{equation*}
from \eqref{LG1Hs3}, we immediately get $\frac{N}{\gamma}-b>0$. Furthermore,   if $A=B^C$, by choosing 
\begin{equation*}
\theta r_1\in\left(2,\frac{N\alpha}{4-b}\right).
\end{equation*}
we get $\frac{N}{\gamma}-b<0$. Again, in both cases we have $\||x|^{-b}\|_{L^\gamma(A)}<+\infty$ and $H^2\hookrightarrow L^{\theta r_1}$ (recall that, for $N=4$,  one has $H^2\hookrightarrow L^p,$  $p\in [2,\infty)$). 

\ {\bf Case $N=3$.} Here, recalling that $H^2\hookrightarrow L^\infty$ it suffices to take $r_1=\infty$, if $A=B$ and $\theta r_1=2$, if $A=B^C$. In the first case, we get  $\frac{N}{\gamma}-b=\frac{\theta(4-b)}{\alpha}>0$ and in the second one, $\frac{N}{\gamma}-b=-\theta s_c<0$. This completes the proof of part (i)

Since $(\widehat{q},\widehat{r})$ is $B$-admissible, the proof of (ii)  runs as in (i). We only point out that, once we obtain \eqref{LG1Hs41}, in view of \eqref{PHsA22.1},
 \begin{equation}\label{LGHsii}
\left\| \chi_A |x|^{-b}|u|^\alpha v\right \|_{L_t^{\widehat{q}'}L^{\widehat{r}'}_x}\leq  c \|u\|^{\theta}_{L^\infty_tH^2_x}\|u\|^{\alpha-\theta}_{L_t^{\widehat{a}} L_x^{\widehat{r}}} \|v\|_{L^{\widehat{q}}_tL^{\widehat{r}}_x},
\end{equation}
which yields (ii).
\end{proof}
\end{lemma}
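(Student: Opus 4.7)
The plan is to reduce both (i) and (ii) to essentially the same spatial Hölder estimate, and then use two different time Strichartz pairings to pick up the two different norms on $v$. Since both sums in the statement have the structure $\|\chi_A|x|^{-b}|u|^\alpha v\|_{B'(X)}$ with $A\in\{B,B^C\}$, I would treat a generic $A$ and only adjust the integrability exponent on $|x|^{-b}$ at the end according to whether we are inside or outside the unit ball. The key idea is the factorization $|u|^\alpha = |u|^\theta\cdot|u|^{\alpha-\theta}$, where the small power $\theta$ absorbs the singularity of $|x|^{-b}$ through the Sobolev embedding $H^2\hookrightarrow L^{\theta r_1}$, while the larger power $|u|^{\alpha-\theta}$ is measured in the critical Strichartz norm $B(\dot H^{s_c})$.

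Concretely, I would begin by fixing the admissible pairs $(\widehat q,\widehat r)\in\mathcal B_0$, $(\widehat a,\widehat r)\in\mathcal B_{s_c}$ and $(\widetilde a,\widehat r)\in\mathcal B_{-s_c}$ from \eqref{PHsA11}--\eqref{PHsA22}, verifying the scaling relations \eqref{PHsA22.1}. Then I would bound the norm by a single mixed-norm: for part (i), by $\|\chi_A|x|^{-b}|u|^\alpha v\|_{L^{\widetilde a'}_tL^{\widehat r'}_x}$; for part (ii), by $\|\chi_A|x|^{-b}|u|^\alpha v\|_{L^{\widehat q'}_tL^{\widehat r'}_x}$. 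In both cases, the same spatial Hölder inequality gives
\begin{equation*}
\|\chi_A|x|^{-b}|u|^\alpha v\|_{L^{\widehat r'}_x}\leq \||x|^{-b}\|_{L^\gamma(A)}\,\|u\|^\theta_{L^{\theta r_1}_x}\,\|u\|^{\alpha-\theta}_{L^{\widehat r}_x}\,\|v\|_{L^{\widehat r}_x},
\end{equation*}
where the exponent budget forces $\tfrac{N}{\gamma}-b=\tfrac{\theta(4-b)}{\alpha}-\tfrac{N}{r_1}$.

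The crux of the argument is the selection of $\theta r_1$ so that (a) $\||x|^{-b}\|_{L^\gamma(A)}<\infty$ in each region, and (b) $H^2\hookrightarrow L^{\theta r_1}$. The sign of $\tfrac{N}{\gamma}-b$ must flip between $A=B$ (need $>0$) and $A=B^C$ (need $<0$), so I would split into the three dimensional cases. For $N\geq 5$, take $\theta r_1=\tfrac{2N}{N-4}$ on $B$ (giving $\tfrac{N}{\gamma}-b=\theta(2-s_c)>0$) and $\theta r_1=2$ on $B^C$ (giving $\tfrac{N}{\gamma}-b=-\theta s_c<0$), both compatible with the Sobolev embedding. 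For $N=4$, exploit $H^2\hookrightarrow L^p$ for all $p\in[2,\infty)$ and pick $\theta r_1$ above or below the threshold $\tfrac{N\alpha}{4-b}$; this threshold exceeds $2$ precisely because $\alpha>\tfrac{8-2b}{N}$, which is exactly where the hypothesis enters. For $N=3$, use $H^2\hookrightarrow L^\infty$, so take $r_1=\infty$ on $B$ and $\theta r_1=2$ on $B^C$. I expect this bookkeeping of admissibility and integrability to be the main technical obstacle, though it is purely an exponent-chasing argument.

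Once the spatial estimate is in hand, I would apply Hölder in time using the identities in \eqref{PHsA22.1}. For part (i), $\tfrac{1}{\widetilde a'}=\tfrac{\alpha-\theta}{\widehat a}+\tfrac{1}{\widehat a}$ yields
\begin{equation*}
\|\chi_A|x|^{-b}|u|^\alpha v\|_{L^{\widetilde a'}_tL^{\widehat r'}_x}\lesssim \|u\|^\theta_{L^\infty_tH^2_x}\|u\|^{\alpha-\theta}_{L^{\widehat a}_tL^{\widehat r}_x}\|v\|_{L^{\widehat a}_tL^{\widehat r}_x}\lesssim \|u\|^\theta_{L^\infty_tH^2_x}\|u\|^{\alpha-\theta}_{B(\dot H^{s_c})}\|v\|_{B(\dot H^{s_c})},
\end{equation*}
since $(\widehat a,\widehat r)\in\mathcal B_{s_c}$. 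For part (ii), the analogous identity $\tfrac{1}{\widehat q'}=\tfrac{\alpha-\theta}{\widehat a}+\tfrac{1}{\widehat q}$ produces the same $u$-factors but a $\|v\|_{L^{\widehat q}_tL^{\widehat r}_x}\leq \|v\|_{B(L^2)}$ factor because $(\widehat q,\widehat r)\in\mathcal B_0$. Summing the $A=B$ and $A=B^C$ contributions then gives both statements, provided $\theta>0$ was taken small enough that all the exponents defined in \eqref{PHsA11}--\eqref{PHsA22} lie in the admissibility ranges \eqref{L2Admissivel}, \eqref{HsAdmissivel}.
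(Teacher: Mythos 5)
Your proposal reproduces the paper's proof essentially verbatim: the same Strichartz pairs from \eqref{PHsA11}--\eqref{PHsA22}, the same factorization $|u|^\alpha=|u|^\theta|u|^{\alpha-\theta}$, the same spatial H\"older estimate, the identical case split on $N\geq5$, $N=4$, $N=3$ with the same choices of $\theta r_1$, and the same time-H\"older step via \eqref{PHsA22.1}. Correct and matching.
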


\begin{remark}\label{RGP} As an immediate consequence of Lemma \ref{lemmaglobal1} (ii), if $A$ denotes either $B$ or $B^C$, we obtain
the following estimate, for $\alpha>\max\left\{1,\frac{8-2b}{N}\right\}$,
$$
\left \|\chi_A|x|^{-b}|u|^{\alpha-1} v  w\right\|_{B'(L^2)} \leq c \| u\|^{\theta}_{L^\infty_tH^2_x}\|u\|^{\alpha-1-\theta}_{B(\dot{H}^{s_c})} \|v\|_{B(\dot{H}^{s_c})}\|  w\|_{B(L^2)},
$$
where $\theta\in(0,\alpha-1)$ is a sufficiently small number. Indeed, we can repeat all the computations above, by replacing $|u|^\alpha v=|u|^\theta |u|^{\alpha-\theta }v$ by $|u|^{\alpha-1} vw=|u|^\theta |u|^{\alpha-1-\theta }vw$. This will  be used in the stability theory below. 
\end{remark}

\begin{lemma}\label{lemmaglobal2} 
Assume $N\geq 8$, $0<b<4$ and $ \frac{8-2b}{N}<\alpha<\frac{8-2b}{N-4}$. Then,
\begin{equation}\label{ELG3} 
\left\|\nabla \left(|x|^{-b}|u|^\alpha u\right) \right\|_{L_t^2L_x^{\frac{2N}{N+2}}}\leq c\| u\|^{\theta}_{L^\infty_tH^2_x}\|u\|^{\alpha-\theta}_{B(\dot{H}^{s_c})} \| \Delta u\|_{B(L^2)},
\end{equation}
where $c>0$ and $\theta\in (0,\alpha)$ is a sufficiently small number.
\end{lemma}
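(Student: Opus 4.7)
The plan is to follow the same architecture as in Lemma \ref{LLH21}, Case~1.1 (the case $N\geq 8$), but replace the device of gaining time integrability through a power $T^{\theta_j}$ by the trick already used in Lemma \ref{lemmaglobal1}: insert the factorization $|u|^{\alpha}=|u|^{\theta}|u|^{\alpha-\theta}$ with $\theta>0$ sufficiently small, and absorb the $|u|^{\theta}$ factor into $\|u\|_{L^\infty_t H^2_x}^{\theta}$ via the Sobolev embedding. The remaining $|u|^{\alpha-\theta}$ factor will be estimated using the $\dot{H}^{s_c}$-admissible pair $(\widehat{a},\widehat{r})$ of \eqref{PHsA11}--\eqref{PHsA22}, and the derivative factor through the $B$-admissibility of $(q_3,\widetilde{r}_3)$ to be chosen below, producing $\|\Delta u\|_{B(L^2)}$.

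Concretely, first write $\nabla F\lesssim |x|^{-b-1}|u|^{\alpha+1}+|x|^{-b}|u|^{\alpha}|\nabla u|$ and split the spatial integration into $B$ and $B^C$. Fix $A\in\{B,B^C\}$ and focus on the representative piece $\|\chi_A |x|^{-b}|u|^{\alpha}\nabla u\|_{L^{2}_tL_x^{\frac{2N}{N+2}}}$. Apply H\"older in space to obtain
\[
\bigl\|\chi_A |x|^{-b}|u|^{\alpha}\nabla u\bigr\|_{L_x^{\frac{2N}{N+2}}}\leq \bigl\||x|^{-b}\bigr\|_{L^{\gamma}(A)}\|u\|^{\theta}_{L_x^{\theta r_1}}\|u\|^{\alpha-\theta}_{L_x^{\widehat{r}}}\|\nabla u\|_{L_x^{r_3}},
\]
with $\frac{N+2}{2N}=\frac{1}{\gamma}+\frac{1}{r_1}+\frac{\alpha-\theta}{\widehat{r}}+\frac{1}{r_3}$. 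Choose $\theta r_1=\frac{2N}{N-4}$ when $A=B$ and $\theta r_1=2$ when $A=B^C$, exactly as in Lemma \ref{lemmaglobal1}; a computation parallel to \eqref{LG1Hs3} then yields $N/\gamma-b>0$ in the first case and $N/\gamma-b<0$ in the second, so $\||x|^{-b}\|_{L^{\gamma}(A)}<\infty$. Then take the $L^2_t$ norm and apply H\"older in time with $\frac{1}{2}=\frac{\alpha-\theta}{\widehat{a}}+\frac{1}{q_3}$, which controls the $|u|^{\alpha-\theta}$ piece by $\|u\|^{\alpha-\theta}_{B(\dot{H}^{s_c})}$. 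Finally, convert $\|\nabla u\|_{L^{q_3}_tL^{r_3}_x}$ to $\|\Delta u\|_{B(L^2)}$ via Sobolev's embedding $\|\nabla u\|_{L^{r_3}_x}\lesssim \|\Delta u\|_{L^{\widetilde{r}_3}_x}$ with $\frac{1}{r_3}=\frac{1}{\widetilde{r}_3}-\frac{1}{N}$; the $B$-admissibility of $(q_3,\widetilde{r}_3)$ follows from the algebraic identities already imposed.

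The term $|x|^{-b-1}|u|^{\alpha+1}$ is handled by the same template, using the integrability condition $N/d>b+1$ (resp.\ $<b+1$) for $|x|^{-b-1}$ on $B$ (resp.\ $B^C$), and applying Sobolev $\|u\|_{L^{r_3}_x}\lesssim \|\Delta u\|_{L^{\widetilde{s}_3}_x}$ with $\frac{1}{r_3}=\frac{1}{\widetilde{s}_3}-\frac{2}{N}$ to get the factor $\|\Delta u\|_{B(L^2)}$. The main obstacle is the simultaneous bookkeeping: one must ensure that the spatial exponents lie in the ranges where Sobolev embedding holds (in particular $r_3,\widetilde{r}_3\in[2,\frac{2N}{N-4})$ and $\widetilde{r}_3<N/2$), that $(q_3,\widetilde{r}_3)\in\mathcal{B}_0$, and that the weight exponent $\gamma$ (resp.\ $d$) lies on the correct side of $N/b$ (resp.\ $N/(b+1)$) on the two pieces. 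This is precisely where the hypothesis $N\geq 8$ (equivalent here to $b<N-4$ since $b<4$) is used; for smaller $N$ the admissible window collapses, which is why the lemma is restricted to high dimensions, consistent with item~(i) of Theorem \ref{GWPH2}.
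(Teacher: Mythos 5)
Your plan is correct and mirrors the paper's own proof of Lemma~\ref{lemmaglobal2}: decompose $\nabla F$ into the two pieces $|x|^{-b-1}|u|^{\alpha+1}$ and $|x|^{-b}|u|^\alpha\nabla u$, split the spatial integral over $B$ and $B^C$, pull out $|u|^\theta$ through $\|u\|_{L^\infty_t H^2_x}^\theta$ with $\theta r_1\in\{2,\tfrac{2N}{N-4}\}$, check the resulting weight condition $\tfrac{N}{\gamma}-b=\tfrac{\theta(4-b)}{\alpha}-\tfrac{N}{r_1}$ (and $\tfrac{N}{d}-b-1$ equal to the same quantity), and finish with H\"older in time. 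The only cosmetic difference is that the paper defines a fresh pair $(a,r)$ in \eqref{pairN>=8a}--\eqref{pairN>=8b} (with $\alpha+1-\theta$ numerators) so that a \emph{single} spatial exponent $r$ carries both $|u|^{\alpha-\theta}$ and $\Delta u$ via Sobolev, whereas you recycle $(\widehat{a},\widehat{r})$ from Lemma~\ref{lemmaglobal1} for the $|u|^{\alpha-\theta}$ factor and couple it with a separate $B$-admissible $(q_3,\widetilde{r}_3)$ for $\Delta u$; a short computation shows the weight condition comes out identically in either bookkeeping, and in both the hypothesis $N\geq8$ is what keeps the exponent carrying $\Delta u$ strictly below $N/2$ so that the two-derivative Sobolev step applies.
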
	      
\begin{proof} Let
\begin{align}\label{pairN>=8a}
a\;=\;\frac{8\alpha(\alpha+1-\theta)}{8-2b-\alpha(N-4)}\;,\;\;\;\;r=\frac{2\alpha N(\alpha+1-\theta)}{\alpha(N+4-2b)-2\theta (4-b)}
\end{align}
and
\begin{align}\label{pairN>=8b}
q\;=\;\frac{8\alpha(\alpha+1-\theta)}{\alpha(N\alpha-4+2b)-\theta(N\alpha -8+2b)}.
\end{align}
Since $\theta>0$ is small it follows easily that $(q,r)$ is $B$-admissible and $(a,r)$ is $\dot{H}^{s_c}$-biharmomic admissible. Also, since  $N\geq 8$ we have $r<\frac{N}{2}$. In addition,
\begin{equation}\label{holglo}
\frac{1}{2}=\frac{\alpha-\theta}{a}+\frac{1}{q}.
\end{equation}

 Let $E(t)=\left\|\nabla(|x|^{-b}|u|^\alpha u)\right\|_{L_x^{\frac{2N}{N+2}}(A)}$, where $A$ denotes either $B$ or $B^C$. It follows from H\"older's inequality and Sobolev embedding  that
\begin{equation}\label{L>=8}
\begin{split}
 E(t) &\lesssim  \||x|^{-b}\|_{L^\gamma(A)} \|\nabla(|u|^\alpha u) \|_{L^{\beta}_x} + \||x|^{-b-1}\|_{L^d(A)}\|u\|^\theta_{L_x^{\theta p_1}}\|u\|^{\alpha-\theta}_{L^{(\alpha-\theta)p_2}_x} \|u\|_{L_x^{p_3}} \nonumber  \\
&\lesssim \||x|^{-b}\|_{L^\gamma(A)}  \|u\|^\theta_{L_x^{\theta r_1}}  \|u\|^{\alpha-\theta}_{L_x^{(\alpha-\theta)r_2}}   \| \nabla u \|_{L_x^{r_3}} + \||x|^{-b-1}\|_{L^d(A)} \|u\|^\theta_{L_x^{\theta p_1}}\|u\|^{\alpha-\theta}_{L^{r}_x} \|\Delta u\|_{L_x^{r}}\nonumber  \\
&\lesssim \||x|^{-b}\|_{L^\gamma(A)} \|u\|^\theta_{L_x^{\theta r_1}}  \|u\|^{\alpha-\theta}_{L_x^{r}}   \| \Delta u \|_{L_x^{r}} + \||x|^{-b-1}\|_{L^d(A)} \|u\|^\theta_{L_x^{\theta p_1}}\|u\|^{\alpha-\theta}_{L^{r}_x} \|\Delta u\|_{L_x^{r}}, 
\end{split}
\end{equation}
where
\[
\begin{cases}
\frac{N+2}{2N}=\frac{1}{\gamma}+\frac{1}{\beta}=\frac{1}{d}+\frac{1}{p_1}+\frac{1}{p_2}+\frac{1}{p_3},\\ 
\frac{1}{\beta}=\frac{1}{r_1}+\frac{1}{r_2}+\frac{1}{r_3}\\ 
(\alpha-\theta)r_2=(\alpha-\theta)p_2=r\\ 
1=\frac{N}{r}-\frac{N}{r_3}\;,\;\;\;\;2=\frac{N}{r}-\frac{N}{p_3}.
\end{cases}
\]
But from the definition of $r$ in \eqref{pairN>=8a} we deduce
\begin{equation}
\begin{cases}\label{L2E} 
\frac{N}{\gamma}-b=\frac{N}{2}+2-b-\frac{N}{r_1}-\frac{N(\alpha+1-\theta)}{r}=\frac{\theta(4-b)}{\alpha}-\frac{N}{r_1}\\ 
\frac{N}{d}-b-1=\frac{N}{2}+2-b-\frac{N}{p_1}-\frac{N(\alpha+1-\theta)}{r}=\frac{\theta(4-b)}{\alpha}-\frac{N}{p_1}.
\end{cases}
\end{equation}
Notice that the right hand side of \eqref{L2E} is the same as in \eqref{LG1Hs3}. Thus, as in the proof of Lemma \ref{lemmaglobal1}, by choosing $\theta r_1=\theta p_1=2$ if $A=B^C$ and $\theta r_1=\theta p_1=\frac{2N}{N-4}$ if $A=B$, we obtain  
$$
E(t)\lesssim \|u\|^\theta_{H_x^{2}}  \|u\|^{\alpha-\theta}_{L_x^{r}}\| \Delta u \|_{L_x^{r}},
$$
which implies 
$$
\left\|\nabla(|x|^{-b}|u|^\alpha u)\right\|_{L_x^{\frac{2N}{N+2}}}\lesssim \|u\|^\theta_{H_x^{2}}  \|u\|^{\alpha-\theta}_{L_x^{r}}\| \Delta u \|_{L_x^{r}}.
$$
Finally, in view of \eqref{holglo}, H\"older's inequality implies 
\[
\begin{split}
\left\|\nabla \left(|x|^{-b}|u|^\alpha u\right) \right\|_{L_t^2L_x^{\frac{2N}{N+2}}} &\lesssim \|u\|^{\theta}_{L^\infty_tH^2_x} \|u\|^{\alpha-\theta}_{L^{a}_tL^{r}_x}\|\Delta u\|_{L^{q}_tL^{r}_x} \\
 &\lesssim \| u\|^{\theta}_{L^\infty_tH^2_x}\|u\|^{\alpha-\theta}_{B(\dot{H}^{s_c})} \| \Delta u\|_{B(L^2)},
\end{split}
\]
which is the desired conclusion.
\end{proof}

Note that in the proof Lemma \ref{lemmaglobal2},  the condition $r<\frac{N}{2}$ is not valid for $N=5,6,7$.  So that we cannot apply the Sobolev embedding. In this case, we have the following.

\begin{lemma}\label{lemmaparcial}
	Let $N=5,6,7$ and $0<b<\frac{N^2-8N+32}{8}$. If $ \frac{8-2b}{N}<\alpha<\frac{N-2b}{N-4}$, then
	$$
	\left\|\nabla \left(|x|^{-b}|u|^\alpha u\right) \right\|_{L_t^2L_x^{\frac{2N}{N+2}}}\leq c\| u\|^{\theta}_{L^\infty_tH^2_x}\|u\|^{\alpha-\theta}_{B(\dot{H}^{s_c})} \| \Delta u\|_{B(L^2)},
	$$
	where $c>0$ and $\theta\in (0,\alpha)$ is a sufficiently small number.
\end{lemma}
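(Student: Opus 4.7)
The plan is to follow the blueprint of the proof of Lemma \ref{lemmaglobal2} (lemmaglobal2), with the admissible pair $(q,r)$ replaced by one that continues to satisfy the Sobolev restriction $r<\frac{N}{2}$ in dimensions $N=5,6,7$. First, I would write
$$
\left|\nabla\bigl(|x|^{-b}|u|^\alpha u\bigr)\right|\lesssim |x|^{-b-1}|u|^{\alpha+1}+|x|^{-b}|u|^\alpha|\nabla u|,
$$
split the spatial integration into $B$ and $B^{C}$, and for each piece apply H\"older's inequality in the three-factor pattern $|u|^{\alpha+1}=|u|^{\theta}\,|u|^{\alpha-\theta}\,|u|$ (respectively $|u|^{\alpha}|\nabla u|=|u|^{\theta}\,|u|^{\alpha-\theta}\,|\nabla u|$) so that the $\theta$-factor is absorbed into $\|u\|_{L^{\infty}_tH^2_x}$ by Sobolev embedding (exactly as in Lemma \ref{lemmaglobal1}), the $(\alpha-\theta)$-factor sits at exponent $r$ and will be bundled into $\|u\|_{B(\dot H^{s_c})}$, and the last factor is reduced to $\|\Delta u\|_{L^r_x}$ through Sobolev's embedding $\|u\|_{L^{p_3}_x}\lesssim\|\Delta u\|_{L^r_x}$ (with $\frac{2}{1}=\frac{N}{r}-\frac{N}{p_3}$) for the first term and $\|\nabla u\|_{L^{r_3}_x}\lesssim\|\Delta u\|_{L^r_x}$ for the second.

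Next, the crucial choice of the $B$-admissible pair $(q,r)$ (together with the companion $\dot H^{s_c}$-biharmonic-admissible pair $(a,r)$) has to satisfy simultaneously: (i) $r<\frac{N}{2}$, required by the Sobolev embedding used on the first term; (ii) $\frac{N}{\gamma}-b$ and $\frac{N}{d}-b-1$ have the correct sign on $B$ (positive) and on $B^{C}$ (negative), coming from the H\"older relations
$$
\tfrac{N}{\gamma}=\tfrac{N+4}{2}-\tfrac{N}{r_1}-\tfrac{N(\alpha+1-\theta)}{r},\qquad \tfrac{N}{d}=\tfrac{N}{\gamma}+1,
$$
so that $\||x|^{-b}\|_{L^\gamma(A)}$ and $\||x|^{-b-1}\|_{L^d(A)}$ are finite; (iii) the time-H\"older identity $\frac{1}{2}=\frac{\alpha-\theta}{a}+\frac{1}{q}$ is compatible with $(a,r)\in\mathcal B_{s_c}$, which gives a positive exponent for the time integration. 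The pair from \eqref{pairN>=8a}--\eqref{pairN>=8b} satisfies (i) only when $N\geq 8$; for $N=5,6,7$ one has to select a new pair with slightly smaller $r$ (indeed $r$ close to $2$ or more generally to $\alpha N/(4-b)$, the lower endpoint of $\dot H^{s_c}$-admissibility) and verify (ii)--(iii) by direct computation. The tighter assumption $\alpha<\frac{N-2b}{N-4}$ is precisely what allows (ii) to be solved while keeping $r<\frac{N}{2}$, and the hypothesis $b<\frac{N^2-8N+32}{8}$ is inserted to ensure $\tfrac{8-2b}{N}<\tfrac{N-2b}{N-4}$, so that the $\alpha$-interval remains nonempty.

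Once the pair is fixed, the pointwise-in-$t$ estimate
$$
\bigl\|\nabla\bigl(|x|^{-b}|u|^\alpha u\bigr)\bigr\|_{L^{2N/(N+2)}_x}\lesssim \|u\|_{H^2_x}^{\theta}\,\|u\|_{L^r_x}^{\alpha-\theta}\,\|\Delta u\|_{L^r_x}
$$
follows from H\"older's inequality and Sobolev embedding exactly as in Lemma \ref{lemmaglobal2}. Integrating in time and applying H\"older with $\frac{1}{2}=\frac{\alpha-\theta}{a}+\frac{1}{q}$ then yields
$$
\bigl\|\nabla\bigl(|x|^{-b}|u|^\alpha u\bigr)\bigr\|_{L^2_tL^{2N/(N+2)}_x}\lesssim \|u\|_{L^\infty_tH^2_x}^{\theta}\,\|u\|_{L^a_tL^r_x}^{\alpha-\theta}\,\|\Delta u\|_{L^q_tL^r_x},
$$
and the claimed inequality is obtained by bounding the last two factors by $\|u\|_{B(\dot H^{s_c})}$ and $\|\Delta u\|_{B(L^2)}$, respectively.

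The main obstacle is purely combinatorial: to locate an admissible pair that enforces $r<\frac{N}{2}$ while still matching the two singular weights $|x|^{-b}$ and $|x|^{-b-1}$ on opposite sides of the unit ball. In higher dimensions the region of feasible pairs is generous, but in $N=5,6,7$ the constraint $r<\frac{N}{2}$ drastically shrinks it, and this shrinkage is exactly what forces the non-sharp range $\alpha<\frac{N-2b}{N-4}$ in the statement. Depending on the position of $\alpha$ inside the allowed interval, a case split (mirroring Cases $A$--$E$ in the proof of Lemma \ref{LLH21}) may be needed, with a slightly different pair in each subcase, but the computations in each case are straightforward.
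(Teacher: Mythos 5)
There is a genuine gap, and it is exactly where the proof of Lemma~\ref{lemmaglobal2} stops working in low dimensions. You propose to keep a single spatial exponent $r$: the factor $|u|^{\alpha-\theta}$ goes in $L^{a}_tL^r_x$ with $(a,r)$ $\dot H^{s_c}$-biharmonic admissible, the factor $\Delta u$ goes in $L^q_tL^r_x$ with $(q,r)$ $B$-admissible, and the time-H\"older identity $\tfrac12=\tfrac{\alpha-\theta}{a}+\tfrac1q$ binds them together, subject to the Sobolev constraint $r<\tfrac{N}{2}$. But these requirements leave no freedom in $r$: $B$-admissibility gives $\tfrac4q=\tfrac N2-\tfrac Nr$, $\dot H^{s_c}$-admissibility gives $\tfrac4{a}=\tfrac N2-\tfrac Nr-s_c$, and plugging both into the H\"older identity determines $a$, $q$ and $r$ uniquely; one lands precisely on the pair \eqref{pairN>=8a}--\eqref{pairN>=8b}, and (taking $\theta\to 0$) $r=\tfrac{2N(\alpha+1)}{N+4-2b}$. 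Then $r<\tfrac N2$ is equivalent to $\alpha<\tfrac{N-2b}{4}$, which is strictly smaller than the stated threshold $\tfrac{N-2b}{N-4}$ for every $N=5,6,7$; for $N=5$, for instance, your scheme only reaches $\alpha<\tfrac{5-2b}{4}$ rather than $\alpha<5-2b$. So there is no single-$r$ pair ``with slightly smaller $r$'' to fall back on, and no amount of case-splitting within the single-$r$ framework changes this.

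What the paper actually does in Lemma~\ref{lemmaparcial} is \emph{decouple} the two spatial exponents. It fixes a $B$-admissible pair $(q_\varepsilon,r_\varepsilon)=\bigl(\tfrac{8}{N-4-2\varepsilon},\tfrac{N}{2+\varepsilon}\bigr)$ whose spatial exponent $r_\varepsilon$ sits just below $\tfrac N2$, independent of $\alpha$; Sobolev embedding absorbs $\|\nabla u\|_{L^{r_3}_x}$ and $\|u\|_{L^{p_3}_x}$ into $\|\Delta u\|_{L^{r_\varepsilon}_x}$. The middle factor $|u|^{\alpha-\theta}$ is placed in a \emph{different} space $L^{\bar a}_tL^{\bar r}_x$, where $(\bar a,\bar r)$ is $\dot H^{s_c}$-biharmonic admissible and chosen so that $\tfrac12=\tfrac{\alpha-\theta}{\bar a}+\tfrac1{q_\varepsilon}$. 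This shifts the binding restriction from $r<\tfrac N2$ on a single $r$ to the much weaker $\bar r<\tfrac{2N}{N-4}$, which is precisely $\alpha<\tfrac{N-2b}{N-4}$ (you correctly identify that $b<\tfrac{N^2-8N+32}{8}$ keeps this interval nonempty, but you misattribute where the upper bound on $\alpha$ comes from). Your proposal contains neither this two-exponent decoupling nor any substitute mechanism, so as written it cannot reach the stated range of $\alpha$.
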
 
\begin{proof}
 The proof is similar to that of Lemma \ref{lemmaglobal2}, but we need to choose different admissible pairs. For $\varepsilon>0$ sufficiently small, we set
\begin{align*}
q_\varepsilon\;=\;\frac{8}{N-4-2\varepsilon}\;,\;\;\;\;r_\varepsilon=\frac{N}{2+\varepsilon},
\end{align*}
and
\begin{align*}
\bar{a}\;=\;\frac{8(\alpha-\theta)}{8-N+2\varepsilon}\;,\;\;\;\;\;\;\;\bar{r}=\frac{2\alpha N(\alpha-\theta)}{\alpha(N-2b-2\varepsilon)-2\theta (4-b)}.
\end{align*}
Note that $r_\varepsilon<\frac{N}{2}$. Moreover, since $N<8$ and $b<\frac{N}{2}$ we get that the denominators of $\bar{a}$ and $\bar{r}$ are positive, if $\theta$ and $\varepsilon$ are sufficiently small. Hence, an easy computation shows that $(\bar{a},\bar{r})$ is $\dot{H}^{s_c}$-biharmonic admissible and $(q_\varepsilon,r_\varepsilon)$ is $B$-admissible. Let $E(t)=\left\|\nabla(|x|^{-b}|u|^\alpha u)\right\|_{L_x^{\frac{2N}{N+2}}(A)}$, where $A$ denotes either $B$ or $B^C$. The H\"older inequality and the Sobolev embedding lead to
\[
\begin{split}\label{L>=8-1}
E(t )&\leq  \||x|^{-b}\|_{L^\gamma(A)}\|u\|^\theta_{L_x^{\theta r_1}}  \|u\|^{\alpha-\theta}_{L_x^{\bar{r}}}   \| \nabla u \|_{L_x^{r_3}}  + \||x|^{-b-1}\|_{L^d(A)}\|u\|^\theta_{L_x^{\theta r_1}}\|u\|^{\alpha-\theta}_{L^{\bar{r}}_x} \|u\|_{L_x^{p_3}}   \\
&\lesssim   \|u\|^\theta_{L_x^{\theta r_1}}  \|u\|^{\alpha-\theta}_{L_x^{\bar{r}}} \| \Delta u \|_{L_x^{r_\varepsilon}} + \|u\|^\theta_{L_x^{\theta r_1}}\|u\|^{\alpha-\theta}_{L^{\bar{r}}_x} \|\Delta u\|_{L_x^{r_\varepsilon}},
\end{split}
\]
where 
\begin{equation*} 
\begin{cases}
\frac{N}{\gamma}=\frac{N}{2}+1-\frac{N}{r_1}-\frac{N(\alpha-\theta)}{r}-\frac{N}{r_3}=\frac{N}{2}+1-\frac{N}{r_1}-\frac{N(\alpha-\theta)}{r}-(\frac{N}{r_\varepsilon}-1)\\ 
\frac{N}{d}=\frac{N}{2}+1-\frac{N}{r_1}-\frac{N(\alpha-\theta)}{r}-\frac{N}{e}=\frac{N}{2}+1-\frac{N}{r_1}-\frac{N(\alpha-\theta)}{r}-(\frac{N}{r_\varepsilon}-2).
\end{cases}
\end{equation*}
Using the definition of the numbers $\bar{r}$ and $r_\varepsilon$ one has
$$
\frac{N}{\gamma}-b=\frac{\theta(4-b)}{\alpha}-\frac{N}{r_1}, \qquad \frac{N}{d}-b-1=\frac{\theta(4-b)}{\alpha}-\frac{N}{r_1},
$$
which are the same relations as in \eqref{L2E}. Since $\frac{1}{2}=\frac{\alpha-\theta}{\bar{a}}+\frac{1}{q_\varepsilon}$, the rest of the proof runs as in Lemma \ref{lemmaglobal2}.
\end{proof}	 

\ It is worth mentioning that assumption $\alpha<\frac{N-2b}{N-4}$ in last lemma appears in view of the condition $\bar{r}<\frac{2N}{N-4}$, which is necessary to $(\bar{a}, \bar{r})$ be $\dot{H}^{s_c}$-biharmonic admissible. However, if we insist with $\alpha$ in the intercritical range, that is, $\frac{8-2b}{N}<\alpha<\frac{8-2b}{N-4}$ one should take $b$ smaller than in the previous lemma. At least in dimension $N=6$ or $N=7$, this is the content of our next lemma.

\begin{lemma}\label{lemmaglobal4} 
Assume $N=6,7$, $0<b<N-4$, and
$
\frac{8-2b}{N}<\alpha<4^*.
$
Then, there exist $\sigma\in (0,1)$ such that
\[
\begin{split}
\left\|\nabla\left(|x|^{-b}|u|^\alpha u \right)\right\|_{L_t^2L_x^{\frac{2N}{N+2}}}&\leq  c\| u\|^{\theta}_{L^\infty_tH^2_x}\|u\|^{\alpha-\theta}_{B(\dot{H}^{s_c})}\left(\| \Delta u\|_{B(L^2)}+\| u\|_{B(L^2)}\right)\nonumber \\
& \quad +c\| u\|^{1-\sigma}_{L_t^\infty H^2_x}\|u\|^{\theta^*}_{B(\dot{H}^{s_c})} \|\Delta u\|^{\alpha-\theta^*+\sigma}_{B(L^2)},
\end{split}
\]
where $\theta^*=\alpha F$, with $F=\frac{4-2b-2\varepsilon+\sigma(N-4)}{8-2b}$, and $\varepsilon,\theta>0$ are sufficiently small numbers.
\end{lemma}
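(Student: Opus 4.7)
The plan is to split $\nabla(|x|^{-b}|u|^\alpha u)$ into the two natural summands $|x|^{-b-1}|u|^{\alpha+1}$ and $|x|^{-b}|u|^\alpha\nabla u$, and for each further split the spatial integration into the exterior and interior of the unit ball $B$. The exterior pieces will feed the first summand in the stated bound, while the interior pieces, treated via a Gagliardo--Nirenberg trade-off, will feed the second, interpolated summand with the exponent $\theta^{*}=\alpha F$.

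On the exterior $B^{C}$, since $|x|^{-b}$ and $|x|^{-b-1}$ are bounded, I would adapt the proof of Lemma \ref{lemmaglobal2} almost verbatim: apply H\"older to distribute the $\alpha+1$ copies of $u$, place $\theta$ copies in a Lebesgue space embedded in $H^{2}$ (as in Lemma \ref{lemmaglobal1}), place the remaining $\alpha-\theta$ copies in the space tied to a $\dot{H}^{s_c}$-biharmonic admissible pair, and absorb the derivative factor via Sobolev and the $B$-admissible pair $(q_{\varepsilon},r_{\varepsilon})=\bigl(\tfrac{8}{N-4-2\varepsilon},\tfrac{N}{2+\varepsilon}\bigr)$ from Lemma \ref{lemmaparcial} (for which $r_{\varepsilon}<N/2$ is automatic when $N=6,7$). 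The required integrabilities $\||x|^{-b}\|_{L^{\gamma}(B^{C})}<\infty$ and $\||x|^{-b-1}\|_{L^{d}(B^{C})}<\infty$ reduce, after the bookkeeping, to $\frac{N}{\gamma}<b$ and $\frac{N}{d}<b+1$, which are secured precisely by $b<N-4$; this produces the first summand $c\|u\|^{\theta}_{L^{\infty}_{t}H^{2}_{x}}\|u\|^{\alpha-\theta}_{B(\dot{H}^{s_c})}\bigl(\|\Delta u\|_{B(L^{2})}+\|u\|_{B(L^{2})}\bigr)$.

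The interior piece on $B$ is the technical core. The same obstruction that forced $\alpha<\tfrac{N-2b}{N-4}$ in Lemma \ref{lemmaparcial} reappears: the candidate pair $(\bar{a},\bar{r})$ playing the role of the $\dot{H}^{s_c}$-biharmonic admissible pair has $\bar{r}\geq\tfrac{2N}{N-4}$ once $\alpha$ approaches $4^{*}$. To reach the whole intercritical range I would invoke a Gagliardo--Nirenberg interpolation
\begin{equation*}
\|u\|_{L^{\bar{r}}}\lesssim \|u\|_{L^{\tilde{r}}}^{1-\sigma}\,\|u\|_{L^{2N/(N-4)}}^{\sigma}\lesssim \|u\|_{L^{\tilde{r}}}^{1-\sigma}\,\|\Delta u\|_{L^{2}}^{\sigma},
\end{equation*}
where $\tilde{r}<\bar{r}$ is chosen so that the companion temporal exponent gives a genuinely $\dot{H}^{s_c}$-biharmonic admissible pair for every $\alpha<4^{*}$, and $\sigma>0$ is the small interpolation parameter measuring how much of the offending $L^{\bar{r}}$-mass is traded for $\|\Delta u\|_{L^{2}}$. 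Raising to the power $\alpha-\theta$ and applying H\"older in time converts a $\sigma$-portion of the $B(\dot{H}^{s_c})$-factors into $\|\Delta u\|_{B(L^{2})}$-factors (and redistributes a bit of the $H^{2}$-weight). The H\"older balance then forces exactly the expressions $\theta^{*}=\alpha F$ and the exponent $\alpha-\theta^{*}+\sigma$ on $\|\Delta u\|_{B(L^{2})}$, producing the second summand; the three exponents $(1-\sigma)+\theta^{*}+(\alpha-\theta^{*}+\sigma)=\alpha+1$ correctly account for all factors of $u$.

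The main obstacle will be the careful bookkeeping required to simultaneously guarantee (i) $B$- and $\dot{H}^{s_c}$-biharmonic admissibility of every pair introduced, in particular $\tilde{r}<\tfrac{2N}{N-4}$ for the \emph{entire} range $\alpha<4^{*}$, which is precisely what fails in Lemma \ref{lemmaparcial}; (ii) matching of the H\"older exponents both in space and in time, after the interpolation inserts the extra parameter $\sigma$; and (iii) the finiteness of the relevant $L^{\gamma}$ and $L^{d}$ norms of $|x|^{-b}$ and $|x|^{-b-1}$ on $B$ and on $B^{C}$, which, in the present dimensional regime $N=6,7$, is what forces $b<N-4$ and explains why higher dimensions $N\geq 8$ need no analogue of this interpolated refinement.
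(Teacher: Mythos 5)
Your overall plan captures the right skeleton — split $\nabla(|x|^{-b}|u|^\alpha u)$ into the piece carrying $\nabla u$ and the piece carrying $|x|^{-b-1}$, split space into $B$ and $B^C$, and use an interpolation with a small parameter $\sigma$ together with exponent bookkeeping giving $(1-\sigma)+\theta^*+(\alpha-\theta^*+\sigma)=\alpha+1$. However, two of your structural claims are off, and the second one would prevent you from arriving at the stated bound.

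First, the hypothesis $b<N-4$ is \emph{not} what makes the exterior norms $\||x|^{-b}\|_{L^\gamma(B^C)}$ and $\||x|^{-b-1}\|_{L^d(B^C)}$ finite. In the paper's bookkeeping, with $\theta r_1=2$ the first reduces to $\tfrac{N}{\gamma}-b=-\theta s_c<0$, and with $\theta r_1=\tfrac{2N}{N-4}$ the second reduces to $\tfrac{N}{d}-b-1=-2(1-\theta)-\theta s_c<0$; both hold for free in the intercritical range, with no constraint on $b$. The assumption $b<N-4$ is used only on the ball, to ensure the interval $\bigl(\tfrac{b}{N-4},1\bigr)$ is nonempty so that the interpolation parameter $\sigma$ can be chosen — it is a constraint coming from the interior piece, not the exterior.

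Second, you route \emph{both} interior pieces through the interpolated second summand, but only the $|x|^{-b-1}|u|^{\alpha+1}$ piece on $B$ needs it. For the piece $|x|^{-b}|u|^\alpha\nabla u$ on $B$, the Sobolev step only costs one derivative ($\|\nabla u\|_{L^{r_3}}\lesssim\|\Delta u\|_{L^p}$ with $p<N$), and $p<N$ is equivalent to $\alpha<\tfrac{N+2-2b}{2}$, which is implied by $\alpha<4^*$; meanwhile the $L^\gamma(B)$ integrability comes out as $\theta(2-s_c)>0$, also free. So this piece belongs to the first summand. It is the other interior piece, $|x|^{-b-1}|u|^{\alpha+1}$ on $B$, where two derivatives would be needed ($p<N/2$) and where this fails for $N=6,7$, forcing the $\sigma$-interpolation. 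If you feed the $\nabla u$ piece into the interpolated bound, the extra derivative factor shifts the $\|\Delta u\|_{B(L^2)}$-exponent to $\alpha-\theta^*+\sigma+1$ (or forces an awkward redistribution), and you will not recover the exact exponent $\alpha-\theta^*+\sigma$ that the lemma states. Finally, the technique you sketch — a pointwise-in-time Gagliardo--Nirenberg inequality $\|u\|_{L^{\bar r}}\lesssim\|u\|_{L^{\tilde r}}^{1-\sigma}\|\Delta u\|_{L^2}^\sigma$ raised to a power — differs from the paper's, which is a three-way H\"older split of $|u|^{\alpha+1}$ into $|u|^{\theta^*}\cdot|u|^{\alpha-\theta^*+\sigma}\cdot|u|^{1-\sigma}$, estimating each factor in its own Lebesgue space and only then taking H\"older in time against the $\dot H^{s_c}$-admissible pair $(a^*,r^*)$ and the $B$-admissible pair $(m,n)$; your version may still close, but you would have to redo the admissibility bookkeeping against $L^\infty_t L^2_x$ rather than $L^m_t L^n_x$, and it is not the route the paper follows.
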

\begin{proof} 
Let us start by defining the following numbers  		
\begin{equation}\label{paradmissivel1}
k=\frac{8\alpha(\alpha+1-\theta)}{8-2b-\alpha(N-4)},\hspace{1.5cm}\;p=\frac{2N\alpha(\alpha+1-\theta)}{(8-2b)(\alpha-\theta)+\alpha(N-4)},
\end{equation}
and
\begin{equation}\label{paradmissivel2}
l=\frac{8\alpha(\alpha+1-\theta)}{\alpha(N\alpha-4+2b)-\theta(N\alpha-8+2b)},
\end{equation}	 
where $\theta\in (0,\alpha)$. It follows easily that $(l,p)$ is $B$-admissible and $(k,p)$ is $\dot{H}^{s_c}$-biharmonic admissible.

Next, take $\sigma\in(0,1)$ sufficiently close to 1 such that $\frac{b}{N-4}<\sigma<1$. For  $D=\alpha-\theta^*+\sigma$    and $\varepsilon>0$ sufficiently small, we set
\begin{equation}\label{PAL41} 
m=\frac{8D}{D(N-4)-2\varepsilon},\;\hspace{1.5cm}\;n=\frac{ND}{2D+\varepsilon},
\end{equation}
and 
\begin{equation}\label{PAL42}
a^*=\frac{8\theta^*}{4+2\varepsilon-D(N-4)}, \;\;\;\;\quad r^*=\frac{2N\alpha\theta^*}{(8-2b)\theta^*-(4+2\varepsilon-D(N-4))\alpha}.
\end{equation}
By assuming that $\varepsilon$ is sufficiently small such that $\sigma(N-4)-b>2\varepsilon$ and $\sigma(N-4)<4+2\varepsilon$ we promptly deduce that $F\in(\frac{1}{2},1)$. In particular, we have $D=\alpha(1-F)+\sigma>0$. So, after some calculations, we deduce  that $(m,n)$ is $B$-admissible and   $(a^*,r^*)$ is $\dot{H}^{s_c}$-biharmonic admissible.	
	
Next we will get the estimate in the lemma itself. Indeed, observe that 
\begin{equation}\label{m3}
\left\|\nabla\left(|x|^{-b}|u|^\alpha u \right)\right\|_{L_t^2L_x^{\frac{2N}{N+2}}}\leq \left\|\nabla\left(|x|^{-b}|u|^\alpha u \right)\right\|_{L_t^2L_x^{\frac{2N}{N+2}}(B)}+\left\|\nabla \left(|x|^{-b}|u|^\alpha u \right)\right\|_{L_t^2L_x^{\frac{2N}{N+2}}(B^C)}.
\end{equation}
As before, let $A$ denote either $B$ or $B^C$. We then have
\begin{equation}\label{LG4Hs1} 
 \left\|\nabla\left(|x|^{-b}|u|^\alpha u \right) \right \|_{L_x^{\frac{2N}{N+2}}(A)}\leq M_1(t,A)+M_2(t,A), 
\end{equation}  
 where 
 $$
 M_1(t,A)=\left \||x|^{-b} \right \|_{L^\gamma(A)} \left \|\nabla(|u|^\alpha u)\right\|_{L^{\beta}_x}, \quad \mbox{and} \quad M_2(t,A)=\left\|\nabla(|x|^{-b})\right\|_{L^d(A)} \left\||u|^\alpha u\right\|_{L^{e}_x}
 $$    
with
\begin{equation}\label{LG4Hs2}
\frac{N+2}{2N}=\frac{1}{\gamma}+\frac{1}{\beta}=\frac{1}{d}+\frac{1}{e}. 
 \end{equation}   
	      
We start by estimating  $M_1(t,A)$. The H\"older inequality and the Sobolev embedding yield 
\begin{equation}\label{LG4Hs3}
 \begin{split}
 M_1(t,A) &\lesssim\||x|^{-b}\|_{L^\gamma(A)}  \|u\|^{\theta}_{L^{\theta r_1}_x}   \|u\|^{\alpha-\theta}_{L_x^{(\alpha-\theta)r_2}}  \|\nabla u\|_{ L^{r_3}_x}  \\
 &\lesssim    \||x|^{-b}\|_{L^\gamma(A)}   \|u\|^{\theta}_{L^{\theta r_1}_x}  \|u\|^{\alpha-\theta}_{L_x^p}  \|\Delta u\|_{ L^{p}_x},  
 \end{split}
\end{equation}
where 
\begin{equation}\label{LG4Hs31}
 \frac{1}{\beta}=\frac{1}{r_1}+\frac{\alpha-\theta}{p}+\frac{1}{r_3},\;\;\;\;\textnormal{and}\;\;\;\; 1=\frac{N}{p}-\frac{N}{r_3},\,\;\;p<N.
 \end{equation}

We notice that, for $\theta$ sufficiently small,  $p<N$ is equivalent to $\alpha<\frac{N+2-2b}{2}$, which is true thanks to our assumptions $\alpha<4^*$.  From \eqref{LG4Hs2} and \eqref{LG4Hs31} we obtain
 \begin{equation*}
 \frac{N}{\gamma}=\frac{N}{2}+2-\frac{N}{r_1}-\frac{N(\alpha+1-\theta)}{p},
\end{equation*}
 which implies, by \eqref{paradmissivel1},
\begin{equation*}\label{LG4Hs32}
\frac{N}{\gamma}-b=\frac{\theta(4-b)}{\alpha}-\frac{N}{r_1}.
\end{equation*}
Now let us check that $\||x|^{-b}\|_{L^\gamma(A)}$ is finite by choosing $r_1$ in an appropriate manner. In fact, if $A=B$,  we choose $\theta r_1=\frac{2N}{N-4}$, so that
$\frac{N}{\gamma}-b=\theta(2-s_c)>0.$
On the other hand, if $A=B^C$, we choose $\theta r_1=2$, so that
$\frac{N}{\gamma}-b=-\theta s_c<0.$
Therefore,  inequality \eqref{LG4Hs3} and the Sobolev embedding yield
\begin{equation}\label{M1tA}
 M_1(t,A) \lesssim \|u\|^{\theta}_{H^2_x}\|u\|^{\alpha-\theta}_{L_x^p}  \|\Delta u\|_{ L^p_x}.  
\end{equation}
	     
\  We now estimate $M_2(t,A)$. Assume first that $A=B^C$.  By applying the H\"older inequality one has	
\begin{equation*}
 M_2(t,B^C)
\leq   \||x|^{-b-1} \|_{L^d(B^C)} \|u\|^{\theta}_{L^{\theta r_1}_x}   \|u\|^{\alpha-\theta}_{L_x^p} \| u\|_{L_x^p},
\end{equation*}
where $\frac{1}{e}=\frac{1}{r_1}+\frac{\alpha-\theta}{p}+\frac{1}{p}$. The last relation and \eqref{LG4Hs2} imply
 \begin{equation*}
 \frac{N}{d}=\frac{N+2}{2}-\frac{N}{r_1}-\frac{N(\alpha+1-\theta)}{p}.
\end{equation*}
In view of \eqref{paradmissivel1} we deduce  $\frac{N}{d}=-1+b+\frac{\theta(4-b)}{\alpha}-\frac{N}{r_1}$.
Setting $\theta r_1=\frac{2N}{N-4}$ we have 
$$\frac{N}{d}-b-1=-2(1-\theta)-\theta s_c<0,$$
implying that $|x|^{-b-1}\in L^d(B^C)$. So, by the Sobolev embedding,
\begin{equation}\label{M2tBC}
 M_2(t,B^C) \lesssim \|u\|^{\theta}_{H^2_x}   \|u\|^{\alpha-\theta}_{L_x^p} \| u\|_{L_x^p}.
\end{equation}
Assume now $A=B$. From the H\"older inequality and the Sobolev embedding (note that $n<\frac{N}{2}$)	
\begin{equation}\label{m2}
\begin{split}
 M_2(t,B)& \lesssim \||x|^{-b-1} \|_{L^d(B)} \|u\|^{\theta^*}_{L^{\theta^*r_1}_x}   \|u\|^{\alpha-\theta^*+\sigma}_{L_x^{(\alpha-\theta^*+\sigma)r_2}}\| u\|^{1-\sigma}_{L_x^{(1-\sigma)r_3}}  \\ 
&\lesssim   \||x|^{-b-1}\|_{L^d(B)}  \|u\|^{\theta^*}_{L^{r^*}_x} \|\Delta u\|^{\alpha-\theta^*+\sigma}_{L_x^n} \| u\|^{1-\sigma}_{L_x^{(1-\sigma)r_3}},
\end{split}
\end{equation}
where
\[
\frac{1}{e}=\frac{\theta^*}{r*}+\frac{1}{r_2}+\frac{1}{r_3} \quad \mbox{and}\quad
 2=\frac{N}{n}-\frac{N}{(\alpha-\theta^*+\sigma) r_2}.
\]
It follows from \eqref{LG4Hs2} that (recalling $D=\alpha-\theta^*+\sigma$)
\begin{equation}\label{LG4Hs5}
\frac{N}{d}= \frac{N+2}{2}+2D- \frac{N\theta^*}{r^*}-\frac{ND}{n}-\frac{N}{r_3},
\end{equation}
which implies by \eqref{PAL41}, \eqref{PAL42} and choosing $(1-\sigma)r_3=\frac{2N}{N-4}$
\begin{eqnarray*}
\frac{N}{d}-b-1=4-b-\frac{\theta^*(4-b)}{\alpha}-\frac{(\alpha-\theta^*)(N-4)}{2}
=(2-s_c)(\alpha-\theta^*).
\end{eqnarray*}
Since $s_c<2$ and $\alpha-\theta^*=\alpha(1-F)>0$, we get $\frac{N}{d}-b-1>0$ and so $|x|^{-b-1}\in L^d(B)$. Hence,  from \eqref{m2} and Sobolev's embedding,
\begin{equation}\label{M2tB}
 M_2(t,B)\lesssim \|u\|^{1-\sigma}_{H^2_x}\|u\|^{\theta^*}_{L^{r*}_x}   \|\Delta u\|^{\alpha-\theta^*+\sigma}_{L_x^{n}}.
\end{equation}

\ Therefore, gathering together the above estimates (see \eqref{M1tA}, \eqref{M2tBC}, and \eqref{M2tB}), we obtain
$$
\left\| \nabla\left(|x|^{-b}|u|^\alpha u \right) \right \|_{L_x^{\frac{2N}{N+2}}(B^C)}\lesssim \|u\|^{\theta}_{H^2_x}   \|u\|^{\alpha-\theta}_{L_x^p} \|\Delta u\|_{L_x^p}+ \|u\|^{\theta}_{H^2_x}   \|u\|^{\alpha-\theta}_{L_x^p} \| u\|_{L_x^p}
$$
and
$$
\left\| \nabla\left(|x|^{-b}|u|^\alpha u \right) \right \|_{L_x^{\frac{2N}{N+2}}(B)}\lesssim \|u\|^{\theta}_{H^2_x}   \|u\|^{\alpha-\theta}_{L_x^p} \| \Delta u\|_{L_x^p}+ \|u\|^{1-\sigma}_{H^2_x}\|u\|^{\theta^*}_{L^{r*}_x}   \|\Delta  u\|^{\alpha-\theta^*+\sigma}_{L_x^{n}}.
$$

\ Finally, since 
$
\frac{1}{2}=\frac{\alpha-\theta}{k}+\frac{1}{l}
$
and 
$
\frac{1}{2}=\frac{\theta}{a^*}+\frac{\alpha-\theta+\sigma}{m},
$
we can use H\"older's inequality in the time variable in the last two inequalities to conclude
\begin{equation*}
 \left\|  \nabla\left(|x|^{-b}|u|^\alpha u \right) \right \|_{L^{2}_tL_x^{\frac{2N}{N+2}}(B^C)}  \lesssim \|u\|^{\theta}_{L^\infty_tH^2_x}\|u\|^{\alpha-\theta}_{L^k_tL_x^p} \left( \|\Delta u\|_{L^l_t L^p_x}+ \| u\|_{L^l_t L^p_x}\right) \\
\end{equation*}
and
\[
\begin{split}
 \left\|  \nabla \left(|x|^{-b}|u|^\alpha u \right) \right \|_{L^{2}_tL_x^{\frac{2N}{N+2}}(B)}  &\lesssim \|u\|^{\theta}_{L^\infty_tH^2_x}\|u\|^{\alpha-\theta}_{L^k_tL_x^p}  \|\Delta u\|_{L^l_t L^p_x}\nonumber  \\
 &\quad + \|u\|^{1-\sigma}_{L^\infty_tH^2_x}\|u\|^{\theta^*}_{L^{a^*}_tL^{r*}_x}   \|\Delta u\|^{\alpha-\theta^*+\sigma}_{L^m_tL_x^{n}}.
\end{split}
\]
In view of \eqref{m3} and recalling that $(m,n)$ and $(l,p)$ are $B$-admissible and $(k,p)$ and $(a^*,r^*)$ are $\dot{H}^{s_c}$-admissible, the proof is completed.
\end{proof}	

\begin{remark}\label{remn=5}
In dimension $N=5$, our proof of Lemma \ref{lemmaglobal4}  does not produce a better result than in Lemma \ref{lemmaparcial}. Indeed, in order to use the Sobolev embedding we have strongly used that $p<N$ is equivalent to $\alpha<\frac{N+2-2b}{2}$. Hence, if $N=5$ we would have $\alpha<\frac{7-2b}{2}$. On the other hand, in dimension $N=5$, Lemma \ref{lemmaparcial} (ii) holds for $\alpha<5-2b$.
\end{remark}

\ Finally we treat the cases $N=3,4$.

\begin{lemma}\label{lemmaglobal3} 
Let $N=3,4$ and $0<b<\frac{N}{2}$. If $ \frac{8-2b}{N}<\alpha<\infty$, then if $F(x,u)=|x|^{-b}|u|^\alpha u$,
\begin{itemize}
\item [(i)] 
 $\|\nabla F\|_{L_t^2L_x^{\frac{2N}{N+2}}}\leq  c\| u\|^{\theta^*}_{L^\infty_tH^2_x}\|u\|^{\alpha-\theta^*}_{B(\dot{H}^{s_c})}\| \Delta u\|_{B(L^2)}^{1/2}\| u\|_{B(L^2)}^{1/2}+c\| u\|^{\theta+1}_{L_t^\infty H^2_x}\|u\|^{\alpha-\theta}_{B(\dot{H}^{s_c})},\;  \mbox{if}\;N=4
$
\item[(ii)]
$\|\nabla F\|_{L_t^2L_x^{\frac{2N}{N+2}}}\leq  c\| u\|^{\theta^*}_{L^\infty_tH^2_x}\|u\|^{\alpha-\theta^*}_{B(\dot{H}^{s_c})}\| \Delta u\|_{B(L^2)}^{1/2}\| u\|_{B(L^2)}^{1/2}+c\| u\|^{\bar{\theta}+1}_{L_t^\infty H^2_x}\|u\|^{\alpha-\bar{\theta}}_{B(\dot{H}^{s_c})}
$

\ 

$
 \qquad \qquad + c\| u\|^{\theta}_{L^\infty_tH^2_x}\|u\|^{\alpha-\theta}_{B(\dot{H}^{s_c})}\| \Delta u\|_{B(L^2)}^{1/2}\| u\|_{B(L^2)}^{1/2}+c\| u\|^{\theta+1}_{L_t^\infty H^2_x}\|u\|^{\alpha-\theta}_{B(\dot{H}^{s_c})} , \quad \mbox{if}\;N=3$,
\end{itemize}
where $c>0$, $\theta\in (0,\alpha)$ is sufficiently small, $\theta^*=\frac{\alpha}{4-b}+$ and $\bar{\theta}=\frac{\alpha}{8-2b}+$ .
\end{lemma}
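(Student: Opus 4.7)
The plan is to adapt the template used in Lemmas \ref{lemmaglobal2} and \ref{lemmaglobal4}, exploiting the very generous Sobolev embeddings in low dimensions: $H^2(\mathbb{R}^3)\hookrightarrow L^\infty$ and $H^2(\mathbb{R}^4)\hookrightarrow L^p$ for every $p\in[2,\infty)$. First I would use the pointwise bound $|\nabla F(x,u)|\lesssim |x|^{-b-1}|u|^{\alpha+1}+|x|^{-b}|u|^\alpha|\nabla u|$ and split the spatial integration into $B$ and $B^C$, reducing matters to four contributions of the form $\||x|^{-\sigma}|u|^{\alpha+1-j}|\nabla^j u|\|_{L^2_tL_x^{2N/(N+2)}(A)}$ with $\sigma\in\{b,b+1\}$, $j\in\{0,1\}$ and $A\in\{B,B^C\}$.

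For each contribution I would factor $|u|^\alpha=|u|^{\vartheta}|u|^{\alpha-\vartheta}$ with $\vartheta$ equal to $\theta$, $\theta^*$, $\bar\theta$, or $\theta+1$ depending on the piece, and apply H\"older's inequality in $x$, distributing the three resulting factors as follows: $|u|^{\alpha-\vartheta}$ lives in $L^r_x$ with $r$ associated to an $\dot{H}^{s_c}$-biharmonic admissible pair $(a,r)$; the low-power factor $|u|^{\vartheta}$ sits in $L^{\vartheta r_1}_x$, absorbed by the $H^2$-norm via Sobolev embedding (one takes $r_1=\infty$ when $N=3$ and $r_1$ large when $N=4$); and the remaining factor, either $|\nabla u|$ or $|u|$, is placed in a Lebesgue space tied to a $B$-admissible pair $(l,p)$. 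When $\nabla u$ ends up in an $L^p_x$ incompatible with a $B$-admissible pair, a Gagliardo--Nirenberg interpolation $\|\nabla u\|_{L^p_x}\lesssim \|u\|_{L^p_x}^{1/2}\|\Delta u\|_{L^p_x}^{1/2}$ is invoked; this is precisely the origin of the factor $\|\Delta u\|_{B(L^2)}^{1/2}\|u\|_{B(L^2)}^{1/2}$ appearing in the conclusion. H\"older in time, using $\frac{1}{2}=\frac{\alpha-\vartheta}{a}+\frac{1}{l}$, closes the estimate.

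The delicate bookkeeping concerns the integrability of the weights: $\frac{N}{\gamma}-b$ and $\frac{N}{d}-b-1$ must be positive on $B$ and negative on $B^C$. As in the proofs of Lemmas \ref{LLH21} and \ref{lemmaglobal4}, both reduce to expressions of the form $\frac{\vartheta(4-b)}{\alpha}-\frac{N}{r_1}$ (possibly shifted by $b-1$), whose sign is tuned by the choice of $r_1$. The explicit values $\theta^*=\frac{\alpha}{4-b}+$ and $\bar\theta=\frac{\alpha}{8-2b}+$ fall out from requiring simultaneously that $(a,r)$ be $\dot{H}^{s_c}$-biharmonic admissible, $(l,p)$ be $B$-admissible, and the H\"older scaling in $x$ hold.

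The main obstacle will be dimension $N=3$, where the singularity $|x|^{-b-1}$ near the origin is severe enough that the integrability constraint cannot be resolved with a single admissible pair; both the $|x|^{-b}$ and $|x|^{-b-1}$ contributions on $B$ must be split further, one subcase requiring the Gagliardo--Nirenberg interpolation and the other not, and this is exactly the source of the four summands in (ii). In $N=4$ the larger Sobolev window (we may choose $r_1$ arbitrarily large) absorbs the $|x|^{-b-1}$ contribution without interpolation, leaving the cleaner two-term expression of (i). The hypotheses $b<N/2$ and $\alpha>(8-2b)/N$ are used only to guarantee that all the admissible pairs entering the argument exist and yield positive integrability exponents.
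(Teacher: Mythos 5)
Your proposal is correct and follows essentially the same route as the paper: split $\nabla F$ into the $|x|^{-b}|u|^\alpha|\nabla u|$ and $|x|^{-b-1}|u|^{\alpha+1}$ pieces, cut in $B$ and $B^C$, factor $|u|^\alpha$ to peel off a small power into $L^\infty_t H^2_x$ via Sobolev embedding, pair the bulk $|u|^{\alpha-\vartheta}$ with an $\dot H^{s_c}$-admissible exponent and the $\nabla u$ factor with a $B$-admissible one, interpolate $\|\nabla u\|\lesssim\|u\|^{1/2}\|\Delta u\|^{1/2}$, and close with H\"older in time and the weight-integrability constraints that force $\theta^*=\frac{\alpha}{4-b}+$ and $\bar\theta=\frac{\alpha}{8-2b}+$. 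One small imprecision in your closing paragraph: the four terms in $N=3$ do not come from a further subdivision on $B$ with one subcase interpolating and the other not — rather, the gradient piece always interpolates and the $|x|^{-b-1}$ piece never does, in both regions; the doubling arises because the $L^\infty$ embedding in $N=3$ forces the larger exponents $\theta^*,\bar\theta$ on $B$ whereas a small $\theta$ suffices on $B^C$, so the $B$ and $B^C$ estimates do not merge, while in $N=4$ the flexible Sobolev window lets the same $(\theta^*,\theta)$ serve both regions and the contributions collapse to two.
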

\begin{proof} 
Define the numbers
\begin{equation*}\label{PAN=4}
a=\frac{8\alpha(\alpha+1-\theta^*)}{8-2b-\alpha(N-4)}, \qquad r=\frac{2N\alpha(\alpha+1-\theta^*)}{(8-2b)(\alpha-\theta^*)+\alpha(N-4)}, 
\end{equation*}
and
\begin{equation*}
q=\frac{8\alpha(\alpha+1-\theta^*)}{\alpha(N(\alpha-\theta^*)+4)-(\alpha-\theta^*)(8-2b)}.
\end{equation*}
We may check that $(a,r)$ is $\dot{H}^{s_c}$-admissible, $(q,r)$ is $B$-admissible and
\begin{equation}\label{relal}
\frac{\alpha-\theta^*}{a}+\frac{1}{q}=\frac{1}{2}.
\end{equation}
\item [(i)] {\bf Case $N=4$.} In this case, we will also use the  $\dot{H}^{s_c}$-admissible pair given by
\begin{equation*}\label{PAN=41}
\widetilde{a}=2(\alpha-\theta), \qquad \widetilde{r}=\frac{4\alpha(\alpha-\theta)}{\alpha(2-b)-\theta(4-b)}.
\end{equation*}
As in Lemma \ref{lemmaglobal4}, we note that
\begin{equation}\label{m31}
\left\|\nabla\left(|x|^{-b}|u|^\alpha u \right)\right\|_{L_t^2L_x^{\frac{2N}{N+2}}}\leq \left\|\nabla\left(|x|^{-b}|u|^\alpha u \right)\right\|_{L_t^2L_x^{\frac{2N}{N+2}}(B)}+\left\|\nabla \left(|x|^{-b}|u|^\alpha u \right)\right\|_{L_t^2L_x^{\frac{2N}{N+2}}(B^C)}.
\end{equation}
Now we estimate both terms on the right-hand side of \eqref{m31}. Let $A$ denote either $B$ or $B^C$. From H\"older's inequality we obtain
\[
\begin{split}
\left\|\nabla\left(|x|^{-b}|u|^\alpha u \right)\right\|_{L_x^{\frac{2N}{N+2}}(A)}
&\lesssim \||x|^{-b}\|_{L^\gamma(A)}\|\nabla(|u|^\alpha u)\|_{L^\beta_x} + \||x|^{-b-1}\|_{L^d(A)}\||u|^\alpha u\|_{L^e_x}\\
&\lesssim \||x|^{-b}\|_{L^\gamma(A)}\|u\|_{L_{x}^{r_1\theta^*}}^{\theta^*}\|u\|_{L_{x}^{(\alpha-\theta^*)r_2}}^{\alpha-\theta^*}\|\nabla u \|_{L^{r_3}_x}\\
&\quad + \||x|^{-b-1}\|_{L^d(A)}\|u\|_{L_{x}^{(\theta+1)p_1}}^{\theta+1} \|u\|_{L_{x}^{(\alpha-\theta)p_2}}^{\alpha-\theta},
\end{split}
\]
where
\begin{equation}\label{m4}
\frac{N+2}{2N}=\frac{1}{\gamma}+\frac{1}{\beta} =\frac{1}{\gamma}+\frac{1}{r_1}+\frac{1}{r_2}+\frac{1}{r_3} \qquad \mbox{and}\qquad  \frac{N+2}{2N}= \frac{1}{d}+\frac{1}{e}=\frac{1}{d}+\frac{1}{p_1}+\frac{1}{p_2}.
\end{equation}
By choosing $(\alpha-\theta^*)r_2=r$, $r_3=r$, $(\alpha-\theta)p_2=\widetilde{r}$, and using H\"older's inequality in time (recall \eqref{relal}), we infer
\begin{equation*}
\begin{split}
\left\|\nabla\left(|x|^{-b}|u|^\alpha u \right)\right\|_{L_t^2L_x^{\frac{2N}{N+2}}(A)}& 
\lesssim \||x|^{-b}\|_{L^\gamma(A)}\|u\|_{L^\infty_tL_{x}^{r_1\theta^*}}^{\theta^*}\|u\|_{L^a_tL_{x}^{r}}^{\alpha-\theta^*}\|\nabla u \|_{L^q_tL^{r}_x}\\
&\quad + \||x|^{-b-1}\|_{L^d(A)}\|u\|_{L^\infty_tL_{x}^{(\theta+1)p_1}}^{\theta+1} \|u\|_{L_t^{\widetilde{a}}L_{x}^{\widetilde{r}}}^{\alpha-\theta} \\
&\lesssim \||x|^{-b}\|_{L^\gamma(A)}\|u\|_{L^\infty_tL_{x}^{r_1\theta^*}}^{\theta^*}\|u\|_{L^a_tL_{x}^{r}}^{\alpha-\theta^*}\| u \|_{L^q_tL^{r}_x}^{1/2}\|\Delta u \|_{L^q_tL^{r}_x}^{1/2}\\
&\quad + \||x|^{-b-1}\|_{L^d(A)}\|u\|_{L^\infty_tL_{x}^{(\theta+1)p_1}}^{\theta+1} \|u\|_{L_t^{\widetilde{a}}L_{x}^{\widetilde{r}}}^{\alpha-\theta}. 
\end{split}
\end{equation*}
In order to finish the proof of part (i) it is sufficient to check that $\||x|^{-b}\|_{L^\gamma(A)}$ and $\||x|^{-b-1}\|_{L^d(A)}$ are finite and $L_x^{r_1\theta^*}(\mathbb{R}^4)$ and  $L_x^{(\theta+1)p_1}(\mathbb{R}^4)$ are embedded in $H^2(\mathbb{R}^4)$. For this, we will choose the parameters $r_1$ and $p_1$ appropriately. 
From \eqref{m4} (recalling that $N=4$),
$$
\frac{4}{d}=3-\frac{4}{p_1}-\frac{4(\alpha-\theta)}{\widetilde{r}}=1+b+\frac{\theta(4-b)}{\alpha}-\frac{4}{p_1}.
$$
Hence $\frac{4}{d}-b-1>0$ if and only if $p_1>\frac{4\alpha}{\theta(4-b)}$. By observing that $\frac{4\alpha(\theta+1)}{\theta(4-b)}>2$ (in view of our assumption $\alpha>\frac{8-2b}{4}$) we then see that if $A=B$ it is sufficient to choose $p_1>\frac{4\alpha}{\theta(4-b)}$ and if $A=B^C$ it is sufficient to choose $p_1$ such that $(\theta+1)p_1\in \left(2, \frac{4\alpha(\theta+1)}{\theta(4-b)}\right)$. In both cases we have $\||x|^{-b-1}\|_{L^d(A)}$ finite and $(\theta+1)p_1>2$, from which we obtain $H^2(\mathbb{R}^4)\hookrightarrow L^{(\theta+1)p_1}(\mathbb{R}^4)$. 

Also from \eqref{m4},
$$
\frac{4}{\gamma}=3-\frac{4}{r_1}-\frac{4(\alpha+1-\theta^*)}{r}=b-1+\frac{\theta^*(4-b)}{\alpha}-\frac{4}{r_1}.
$$
Since $\theta^*$ is slightly bigger than $\frac{\alpha}{4-b}$, let us write $\theta^*=\frac{\alpha}{4-b-3\alpha\delta}$, where $\delta>0$ is sufficiently small. If $A=B$ we choose $r_1$ such that $r_1\theta^*=\frac{\delta}{2}$ to deduce that $\frac{4}{\gamma}-b=\delta\theta^*>0$. On the other hand, if $A=B^C$ we choose $r_1=\frac{2}{\theta_1}$ to obtain that $\frac{4}{\gamma}-b=3\delta\theta^*-2\theta^*$, which is negative because $\delta$ is sufficiently small.  Thus, in both cases we have $\||x|^{-b}\|_{L^\gamma(A)}$ finite and $r_1\theta^*\geq2$, from which we also obtain $H^2(\mathbb{R}^4)\hookrightarrow L^{r_1\theta^*}(\mathbb{R}^4)$. This completes the proof of (i).

\ 

\item [(ii)] {\bf Case $N=3$.} First, we consider the estimate on $B$. As  before, (using $H^2\hookrightarrow L^\infty$ and \eqref{relal})
\[
\begin{split}
\left\|\nabla\left(|x|^{-b}|u|^\alpha u \right)\right\|_{L^2_tL_x^{\frac{2N}{N+2}}(B)}
&\leq \left \| \||x|^{-b}\|_{L^\gamma(A)}\|u\|_{L_{x}^{\infty}}^{\theta^*}\|u\|_{L_{x}^{r}}^{\alpha-\theta^*}\|\nabla u\|_{L^{r}_x}\right\|_{L^2_t}\\
&\quad + \left\|\||x|^{-b-1}\|_{L^d(A)}\|u\|_{L_{x}^{\infty}}^{\bar{\theta}+1} \|u\|_{L_{x}^{\widetilde{r}}}^{\alpha-\bar{\theta}}\right\|_{L^2_t}\\
&\lesssim  \|u\|^{\theta^*}_{L^\infty_tH_{x}^{2}}\|u\|_{L_t^{a}L_{x}^{r}}^{\alpha-\theta^*}\|\nabla u\|_{L_t^{q}L^{r}_x} + \|u\|_{L^\infty_tH_{x}^{2}}^{\bar{\theta}+1} \|u\|_{L_{t}^{\bar{a}}L_{x}^{\bar{r}}}^{\alpha-\bar{\theta}},
\end{split} 
\]
where $(\bar{a},\bar{r})$ is the $\dot{H}^{s_c}$-admissible pair given by  $\left(2(\alpha-\bar{\theta}), \frac{3\alpha(\alpha-\bar{\theta})}{\alpha(2-b)-\bar{\theta}(4-b)}\right)$ and
\begin{equation*}
\frac{N+2}{2N}=\frac{1}{\gamma}+\frac{\alpha-\theta^*}{r}+\frac{1}{r}, \qquad   \frac{N+2}{2N}=\frac{1}{d}+\frac{\alpha-\bar{\theta}}{\bar{r}},
\end{equation*}
which implies that  $\frac{3}{\gamma}-b=-1+\frac{\theta^*(4-b)}{\alpha}$ and $\frac{3}{d}-b-1=-\frac{1}{2}+\frac{\bar{\theta}(4-b)}{\alpha}$. It follows from $\theta^*=\frac{\alpha}{4-b}+$ and $\bar{\theta}=\frac{\alpha}{8-2b}+$ that $\frac{3}{\gamma}-b>0$ and $\frac{3}{b}-b-1>0$, i.e., the norms $\||x|^{-b}\|_{L^\gamma(B)}$ and $\||x|^{-b-1}\|_{L^d(B)}$ are finite. Thus,
\[
\begin{split}
\left\|\nabla\left(|x|^{-b}|u|^\alpha u \right)\right\|_{L^2_tL_x^{\frac{2N}{N+2}}(B)}\lesssim \|u\|^{\theta^*}_{L^\infty_tH_{x}^{2}}\|u\|_{B(\dot{H}^{s_c})}^{\alpha-\theta^*}\|\nabla u\|_{L_t^{q}L^{r}_x}+ \|u\|_{L^\infty_tH_{x}^{2}}^{\bar{\theta}+1} \|u\|_{B(\dot{H}^{s_c})}^{\alpha-\bar{\theta}}.
\end{split}
\]

\ We now estimate on $B^C$. Arguing in the same way as before and using $\theta$ instead of $\theta^*$ and $\bar{\theta}$, we see if $\theta$ is small, then $\||x|^{-b}\|_{L^\gamma(B)}, \||x|^{-b-1}\|_{L^d(B)}<\infty$. Thus,
$$
\left\|\nabla\left(|x|^{-b}|u|^\alpha u \right)\right\|_{L^2_tL_x^{\frac{2N}{N+2}}(B^C)}\lesssim \|u\|^{\theta}_{L^\infty_tH_{x}^{2}}\|u\|_{B(\dot{H}^{s_c})}^{\alpha-\theta}\|\nabla u\|_{L_t^{q}L^{r}_x}+ \|u\|_{L^\infty_tH_{x}^{2}}^{\theta+1} \|u\|_{B(\dot{H}^{s_c})}^{\alpha-\theta}.
$$
We complete the proof of the lemma using the last two inequalities and interpolation.
\end{proof}

Now, with all the previous lemmas in hand we are in a position to prove Theorem \ref{GWPH2}. 

\begin{proof}[\bf{Proof of Theorem \ref{GWPH2}}] 
As before, we use the contraction mapping argument to the map $G$. 
Let $S$ be the set of all functions $u:\mathbb{R}^N\times \mathbb{R} \to\mathbb{R}$ such that
$$
\|u\|_{B(\dot{H}^{s_c})}\leq 2\|e^{it\Delta^2}u_0\|_{B(\dot{H}^{s_c})}\;\quad \textnormal{and}\; \quad \|u\|_{B(L^2)}+\|\Delta u\|_{B(L^2)}\leq 2c\|u_0\|_{H^2}.
$$ 
We shall show that $G=G_{u_0}$ defined in \eqref{OPERATOR} is a contraction on $S$ equipped with the metric 
$$
d(u,v)=\|u-v\|_{B(L^2)}+\|u-v\|_{B(\dot{H}^{s_c})}.
$$

Assume first that condition (i) of the theorem holds.	
By using Lemma \ref{Lemma-Str} and Proposition \ref{estimativanaolinear} (see \eqref{EstimativaImportante},) we get
\begin{equation*}\label{GHs11}
\|G(u)\|_{B(\dot{H}^{s_c})}\leq \|e^{it\Delta^2}u_0\|_{B(\dot{H}^{s_c})}+ c\| \chi_BF \|_{B'(\dot{H}^{-s_c})}+c\| \chi_{B^C}F \|_{B'(\dot{H}^{-s_c})}
\end{equation*}
\begin{equation*}\label{GHs21}
\|G(u)\|_{B(L^2)}\leq c\|u_0\|_{L^2}+ c\|\chi_{B} F \|_{B'(L^2)}+ c\|\chi_{B^C} F \|_{B'(L^2)}
\end{equation*}
and	
\begin{equation*}\label{GHs31}
\|\Delta G(u)\|_{B(L^2)}\leq c \|\Delta u_0\|_{L^2}+ c\|\nabla F\|_{L^2_tL_x^{\frac{2N}{N+2}}},
\end{equation*}
where  $F=F(x,u)=|x|^{-b}|u|^\alpha u$. 
An application of Lemmas \ref{lemmaglobal1} and \ref{lemmaglobal2} then yield, for any $u\in S$,
\begin{equation}\label{TGHS1}
\begin{split}
\|G(u)\|_{B(\dot{H}^{s_c})}
&\leq \|e^{it\Delta^2}u_0\|_{B(\dot{H}^{s_c})} +c\| u \|^\theta_{L^\infty_tH^2_x}\| u \|^{\alpha-\theta}_{B(\dot{H}^{s_c})}\| u \|_{B(\dot{H}^{s_c})}  \\
&\leq  \|e^{it\Delta^2}u_0\|_{S(\dot{H}^{s_c})}+2^{\alpha+1}c^{\theta+1}\|u_0\|^\theta_{H^2}\| e^{it\Delta^2}u_0 \|^{\alpha-\theta+1}_{B(\dot{H}^{s_c})} \\ 
&\leq  \|e^{it\Delta^2}u_0\|_{S(\dot{H}^{s_c})}+2^{\alpha+1}c^{\theta+1}\eta^\theta\| e^{it\Delta^2}u_0 \|^{\alpha-\theta+1}_{B(\dot{H}^{s_c})}
\end{split}
\end{equation}
and
\begin{equation}\label{TGHS11}
\begin{split}
\|G(u)\|_{B(L^2)}+\|\Delta G(u)\|_{B(L^2)}&\leq  c\|u_0\|_{H^2}+c\| u \|^\theta_{L^\infty_tH^2_x}\| u \|^{\alpha-\theta}_{B(\dot{H}^{s_c})}(\|\Delta u\|_{B(L^2)}+\|u\|_{B(L^2)})\\
& \leq  c\|u_0\|_{H^2}+c2^{\alpha+1}c^{\theta+1}\|u_0\|_{H^2_x}^{\theta+1} \| e^{it\Delta^2}u_0 \|^{\alpha-\theta}_{B(\dot{H}^{s_c})}\\
& \leq  c\|u_0\|_{H^2}+c2^{\alpha+1}c^{\theta+1}\eta^\theta \| e^{it\Delta^2}u_0 \|^{\alpha-\theta}_{B(\dot{H}^{s_c})}\|u_0\|_{H^2_x}
\end{split}
\end{equation}
where in the second inequality we have used the fact that $(\infty,2)$ is $B$-admissible to see that $\|u\|_{L^\infty_tH^2_x}\leq \|\Delta u\|_{B(L^2)}+\|u\|_{B(L^2)}$.

\ Now if $\| e^{it\Delta^2}u_0 \|_{B(\dot{H}^{s_c})}<\delta$ with 
\begin{equation}\label{WD1}
\delta\leq \min\left\{\sqrt[\alpha-\theta]{\frac{1}{2c^{\theta+1}2^{\alpha+1}\eta^\theta}}     , \sqrt[\alpha-\theta]{ \frac{1}{4c^{\theta+1}2^{\alpha+1}\eta^\theta}}\right\},
\end{equation}
 it follows from \eqref{TGHS1} and \eqref{TGHS11} that 
$$\|G(u)\|_{B(\dot{H}^{s_c})}\leq 2\| e^{it\Delta^2}u_0 \|_{B(\dot{H}^{s_c})}\quad \mbox{and}\quad \|G(u)\|_{B(L^2)}+\|\Delta G(u)\|_{B(L^2)}\leq 2c\|u_0\|_{H^2},$$
which means to say   $G(u)\in S$.

\ To show that $G$ is a contraction on $S$, we repeat the above computations taking into account \eqref{FEI}. Indeed,
\begin{equation*}
\begin{split}\label{C1GH1}
\|G(u)-G(v)\|_{B(\dot{H}^{s_c})}&\leq c\|\chi_B(F(x,u)-F(x,v))\|_{B'(\dot{H}^{-s_c})}+ c\|\chi_{B^C}(F(x,u)-F(x,v))\|_{B'(\dot{H}^{-s_c})}\\
&\leq c\left\|\chi_B|x|^{-b}\big||u|+|v|\big|^{\alpha}|u-v|\right\|_{B'(\dot{H}^{-s_c})}+ c\left\|\chi_{B^C}|x|^{-b}\big||u|+|v|\big|^{\alpha}|u-v|\right\|_{B'(\dot{H}^{-s_c})}\\
&\leq c\left( \| u \|^\theta_{L^\infty_tH^2_x}+\| v \|^\theta_{L^\infty_tH^2_x}\right)\left(\| u \|^{\alpha-\theta}_{B(\dot{H}^{s_c})}+\| v \|^{\alpha-\theta}_{B(\dot{H}^{s_c})}\right)\| u -v\|_{B(\dot{H}^{s_c})}.\\
\end{split}
\end{equation*}
Thus, if $u,v\in S$ then
\begin{equation*}
\begin{split}
\|G(u)-G(v)\|_{B(\dot{H}^{s_c})} &\leq  2c(2c)^\theta \| u_0 \|^\theta_{H^2}2^{\alpha-\theta}\|e^{it\Delta^2}u_0 \|^{\alpha-\theta}_{B(\dot{H}^{s_c})}\| u -v\|_{B(\dot{H}^{s_c})}\\
& =   2^{\alpha+1}c^{\theta+1} \| u_0 \|^\theta_{H^2}\|e^{it\Delta^2}u_0 \|^{\alpha-\theta}_{B(\dot{H}^{s_c})}\| u -v\|_{B(\dot{H}^{s_c})}.
\end{split}
\end{equation*}
By similar arguments we also obtain
\begin{equation*}\label{C2GH1}
\|G(u)-G(v)\|_{B(L^2)}\leq 2^{\alpha+1}c^{\theta+1} \| u_0 \|^\theta_{H^2}\|e^{it\Delta^2}u_0 \|^{\alpha-\theta}_{B(\dot{H}^{s_c})} \|u-v\|_{B(L^2)}.
\end{equation*}
From the two last inequalities and \eqref{WD1} it follows that
$$
d(G(u),G(v)) \leq 2^{\alpha+1}c^{\theta+1} \| u_0 \|^\theta_{H^2}\|e^{it\Delta^2}u_0 \|^{\alpha-\theta}_{B(\dot{H}^{s_c})}d(u,v)\leq \frac{1}{2}d(u,v),
$$
which means that  $G$ is also a contraction. Therefore, by the contraction mapping principle, $G$ has a unique fixed point $u\in S$, which is a global solution of \eqref{IBNLS}. Thus the proof of the theorem is completed in this case.

By using Lemmas \ref{lemmaparcial}, \ref{lemmaglobal4} and \ref{lemmaglobal3} instead of Lemma \ref{lemmaglobal2}, the same proof, with minor modifications, still goes if we are in the assumptions\footnote{In (iii) and (iv), we use the fact that if $u\in S$, then $\|u\|^{\frac{1}{2}}_{B(L^2)}\|\Delta u\|^{\frac{1}{2}}_{B(L^2)}\leq 2c\|u_0\|_{H^2}$. } (ii), (iii) and (iv) of the theorem. So we omit the details.
\end{proof}

\subsection{Scattering}

\ As mentioned in the introduction, Proposition \ref{SCATTERSH1} gives us a criterion to establish scattering.  Before proving the proposition itself, we must point out that our estimates in Lemmas \ref{lemmaglobal1}, \ref{lemmaglobal2}, \ref{lemmaparcial}, \ref{lemmaglobal4}, and \ref{lemmaglobal3} also hold if we replace the norms (in time) on the whole $\mathbb{R}$ by a bounded interval, say, $I$. To see this it is sufficient to note that in all results the only estimates in time we used was the H\"older inequality.

\begin{proof}[\bf{Proof of Proposition \ref{SCATTERSH1}}] First, we claim that  $\|u\|_{B(\dot{H}^{s_c})}<+\infty$ implies
 \begin{equation}\label{SCATTER1}
 \|u\|_{B(L^2)}+\|\Delta u\|_{B(L^2)}<+\infty.
 \end{equation}
We will only show that  $ \|u\|_{B(L^2;[0,\infty))}+\|\Delta u\|_{B(L^2;[0,\infty))}<+\infty$. A similar analysis may be performed to see that $ \|u\|_{B(L^2;(-\infty,0])}+\|\Delta u\|_{B(L^2;(\infty,0])}<+\infty$. Given $\delta>0$ (to be chosen later) we decompose the interval $[0,\infty)$ into $n$ intervals $I_j=[t_j,t_{j+1})$ such that $\|u\|_{B(\dot{H}^{s_c};I_j)}<\delta$, for all $j=1,\ldots,n$. The integral equation  on the time interval $I_j$ is given by
\begin{equation*}\label{SCATTER2}
u(t)=e^{i(t-t_j)\Delta^2}u(t_j)+i\lambda\int_{t_j}^{t}e^{i(t-s)\Delta^2}(|x|^{-b}|u|^\alpha u)(s)ds.
\end{equation*}
Let us first assume that (i) or (ii) in Theorem \ref{GWPH2} hold. In this case,
from Lemmas \ref{Lemma-Str} and \ref{lemmaglobal1},
\begin{equation}\label{SCATTER3}
\begin{split}
\|u\|_{B(L^2;I_j)}&\leq c\|u(t_j)\|_{L^2_x}+c\left\|\chi_B|x|^{-b}|u|^\alpha u \right\|_{B'(L^2;I_j)}	+ c\left\|\chi_{B^C}|x|^{-b}|u|^\alpha u \right\|_{B'(L^2;I_j)}\\
& \leq c\|u(t_j)\|_{L^2_x}+c\| u\|^{\theta}_{L^\infty_{I_j}H^2_x}\|u\|^{\alpha-\theta}_{B(\dot{H}^{s_c};I_j)} \|  u\|_{B(L^2;I_j)}\\
&\leq c\|u(t_j)\|_{L^2_x} +c\eta^\theta \delta^{\alpha-\theta}\| u\|_{B(L^2;I_j)}.
\end{split}
\end{equation}	 
Also, from Proposition \ref{estimativanaolinear} and Lemmas \ref{lemmaglobal2} and \ref{lemmaparcial},
\begin{equation}\label{SCATTER4}
\begin{split}
\|\Delta u\|_{B(L^2;I_j)}&\leq c\|\Delta u(t_j)\|_{L^2_x}+c\|\nabla(|x|^{-b}|u|^\alpha u)\|_{L^2_{I_j}L_x^{\frac{2N}{N+2}}}\\
& \leq c\|\Delta u(t_j)\|_{L^2_x}+c\| u\|^{\theta}_{L^\infty_{I_j}H^2_x}\|u\|^{\alpha-\theta}_{B(\dot{H}^{s_c};I_j)} \|  \Delta u\|_{B(L^2;I_j)}\\
&\leq c\|\Delta u(t_j)\|_{L^2_x} +c\eta^\theta \delta^{\alpha-\theta}\| \Delta u\|_{B(L^2;I_j)}.
\end{split}
\end{equation}
Thus, \eqref{SCATTER3} and \eqref{SCATTER4} yield
$$
\|u\|_{B(L^2;I_j)}+\|\Delta u\|_{B(L^2;I_j)} \leq c \eta+c\eta^\theta\delta^{\alpha-\theta}(\|u\|_{B(L^2;I_j)}+\|\Delta u\|_{B(L^2;I_j)}).
$$
Consequently, by taking   $\delta>0$ such that $ \eta^\theta\delta^{\alpha-\theta}<\frac{1}{2c}$ we deduce
$$
\| u\|_{B(L^2;I_j)}+	\|\Delta u\|_{B(L^2;I_j)} \leq 2c\eta.
$$
By summing over the $n$ intervals, we conclude   \eqref{SCATTER1} if (i) or (ii) in Theorem \ref{GWPH2} hold. If we assume that (iv) holds, in view of Lemma \ref{lemmaglobal3} and taking into account that 	$\|u\|_{L^\infty_{I_j}H^2_x}\leq \|\Delta u\|_{B(L^2;I_j)}+\|u\|_{B(L^2;I_j)}$, the analysis is similar to that given above.

It remains to establish \eqref{SCATTER1} if (iii) holds. The argument here is a little bit different in view of the term $\|\Delta u\|^{\alpha-\theta^*+\sigma}_{B(L^2)}$ appearing in Lemma \ref{lemmaglobal4}. Indeed, for $t\in I_j$, let us set
$$
A(t)=\| u\|_{B(L^2;[t_j,t])}+	\|\Delta u\|_{B(L^2;[t_j,t])}.
$$
As in \eqref{SCATTER3}, we have
\begin{equation}\label{SCATTER31}
\begin{split}
\|u\|_{B(L^2;[t_j,t])}& \leq c\|u(t_j)\|_{L^2_x}+c\| u\|^{\theta}_{L^\infty_{[t_j,t]}H^2_x}\|u\|^{\alpha-\theta}_{B(\dot{H}^{s_c};[t_j,t])} \|  u\|_{B(L^2;[t_j,t])}\\
&\leq c\|u(t_j)\|_{L^2_x} +c\eta^\theta \delta^{\alpha-\theta}A(t).
\end{split}
\end{equation}
On the other hand, by Proposition \ref{estimativanaolinear} and Lemma \ref{lemmaglobal4},
\begin{equation}\label{SCATTER41}
\begin{split}
\|\Delta u\|_{B(L^2;[t_j,t])}&\leq c\|\Delta u(t_j)\|_{L^2_x}+c\|\nabla(|x|^{-b}|u|^\alpha u)\|_{L^2_{[t_j,t]}L_x^{\frac{2N}{N+2}}}\\
&\leq c\|\Delta u(t_j)\|_{L^2_x} + c\| u\|^{\theta}_{L^\infty_{[t_j,t]}H^2_x}\|u\|^{\alpha-\theta}_{B(\dot{H}^{s_c};[t_j,t])}\left(\| \Delta u\|_{B(L^2;[t_j,t])}+\| u\|_{B(L^2;[t_j,t])}\right) \\
& \quad +c\| u\|^{1-\sigma}_{L_{[t_j,t]}^\infty H^2_x}\|u\|^{\theta^*}_{B(\dot{H}^{s_c};[t_j,t])} \|\Delta u\|^{\alpha-\theta^*+\sigma}_{B(L^2;[t_j,t])}\\
& \leq c\|\Delta u(t_j)\|_{L^2_x}+c\eta^{\theta}\delta^{\alpha-\theta}A(t) + c\eta^{1-\sigma}\delta^{\theta^*}A(t)^{\alpha-\theta^*+\sigma}.
\end{split}
\end{equation}
By summing \eqref{SCATTER31} and \eqref{SCATTER41} we get
\begin{equation}\label{SCATTER42}
A(t)\leq c\eta+2c\eta^{\theta}\delta^{\alpha-\theta}A(t) + c\eta^{1-\sigma}\delta^{\theta^*}A(t)^{\alpha-\theta^*+\sigma}.
\end{equation}
We first choose  $\delta$ sufficiently small such that $2c\eta^{\theta}\delta^{\alpha-\theta}<\frac{1}{2}$ to obtain, from \eqref{SCATTER42},
\begin{equation}\label{SCATTER43}
A(t)\leq 2c\eta+2c\eta^{1-\sigma}\delta^{\theta^*}A(t)^{\alpha-\theta^*+\sigma}, \qquad t\in I_j.
\end{equation}
By noting that $\alpha-\theta^*+\sigma>1$, if $\delta$ is sufficiently small, a standard continuity argument shows that $A(t)\leq 2c\eta$, for any $t\in I_j$. Since $A(t)$ is bounded on $I_j$ we conclude that $\| u\|_{B(L^2;I_j)}+	\|\Delta u\|_{B(L^2;I_j)}$ is finite. By summing over the $n$ intervals, we finally obtain \eqref{SCATTER1}.
	
Now we turn attention back to the   proof of the proposition. The proof is quite standard by now. Indeed, assume that (i) or (ii) in Theorem \ref{GWPH2} hold and let
$$
\phi^+=u_0+i\lambda\int\limits_{0}^{+\infty}e^{i(-s)\Delta^2}|x|^{-b}(|u|^\alpha u)(s)ds.
$$
We claim that $\phi^+ \in H^2(\mathbb{R}^N)$. To see this, following the above steps, we get 
\begin{equation*}
 \|\phi^+\|_{L^2}\leq c\|u_0\|_{L^2}+c \| u \|^\theta_{L^\infty_tH^2_x}\| u \|^{\alpha-\theta}_{B(\dot{H}^{s_c})}\|u\|_{B(L^2)}
\end{equation*}	 
and
\begin{equation*}
\|\Delta \phi^+\|_{L^2}\leq  c\|\Delta u_0\|_{L^2}+c\| u\|^{\theta}_{L^\infty_tH^2_x}\|u\|^{\alpha-\theta}_{B(\dot{H}^{s_c})} (\| \Delta u\|_{B(L^2)} +\|  u\|_{B(L^2)}). 
\end{equation*}
Therefore, \eqref{SCATTER1} yields the claim.
  
Since $u$ is a solution of \eqref{IBNLS}, a simple inspection gives
$$
 u(t)-e^{it\Delta^2}\phi^+=-i\int\limits_{t}^{+\infty}e^{i(t-s)\Delta^2}|x|^{-b}(|u|^\alpha u)(s)ds.
$$
Hence, as above,
$$
 \|u(t)-e^{it\Delta^2}\phi^+\|_{L^2_x}\leq c \| u \|^\theta_{L^\infty_tH^2_x}\| u \|^{\alpha-\theta}_{B(\dot{H}^{s_c};[t,\infty))}\|u\|_{B(L^2)}
$$
and
\begin{align*}
\|\Delta(u(t)-e^{it\Delta^2}\phi^+)\|_{L^2_x}    &\leq  c  \| u\|^{\theta}_{L^\infty_tH^2_x}\|u\|^{\alpha-\theta}_{B(\dot{H}^{s_c};[t,\infty))} (\| \Delta u\|_{B(L^2)} +\|  u\|_{B(L^2)})
\end{align*}
Now, observing that $\|u\|_{B(\dot{H}^{s_c};[t,\infty))}\rightarrow 0$ as $t \rightarrow +\infty$, using \eqref{SCATTER1}, we conclude that 
\begin{equation}\label{scat1}
\|u(t)-e^{it\Delta^2}\phi^+\|_{H^2_x}\rightarrow 0, \,\,\textnormal{as}\,\,t\rightarrow +\infty.
\end{equation}

By using similar arguments one may also see that 
$
\|u(t)-e^{it\Delta^2}\phi^-\|_{H^2_x}\rightarrow 0, \,\,\textnormal{as}\,\,t\rightarrow -\infty.
$
where
$$
\phi^-=u_0+i\lambda\int_0^{-\infty}e^{i(-s)\Delta^2}|x|^{-b}(|u|^\alpha u)(s)ds.
$$
Thus, the proof of the proposition is completed in this case.
 Let us point out that the crucial points to obtain \eqref{scat1} were \eqref{SCATTER1} and the fact $\|u\|_{B(\dot{H}^{s_c};[t,\infty))}\rightarrow 0$, as $t \rightarrow +\infty$. Since the   norm $\|u\|_{B(\dot{H}^{s_c};[t,\infty))}$ also appears in our estimates if we assume (iii) or (iv) in Theorem \ref{GWPH2}, the proof in this cases follows in a similar fashion as above. So, we omit the details.
\end{proof}

\section{stability}

In this section, we shall show Theorem \ref{LTP}. To this end, we start with the following proposition.

\begin{proposition}\label{STP}{\bf (Short-time perturbation).} 
Assume that assumptions in Theorem \ref{GWPH2} hold.  Let $I\subseteq \mathbb{R}$ be a time interval containing zero and let $\widetilde{u}$ be a solution of
\begin{equation*}\label{PE}
i\partial_t \widetilde{u} +\Delta^2 \widetilde{u} + \lambda|x|^{-b} |\widetilde{u}|^\alpha \widetilde{u} =e,
\end{equation*}  
 defined on $I\times \mathbb{R}^N$, with initial data $\widetilde{u}_0\in H^2(\mathbb{R}^N)$, and satisfying 
\begin{equation}\label{PC11}  
\sup_{t\in I}  \|\widetilde{u}(t)\|_{H^2_x}\leq M \;\; \mbox{and}\;\; \|\widetilde{u}\|_{B(\dot{H}^{s_c}; I)}\leq \varepsilon,
\end{equation}
for some positive constant $M$ and some small $\varepsilon>0$.
	
\indent  Let $u_0\in H^2(\mathbb{R}^N)$ be such that 
\begin{equation}\label{PC22}
\|u_0-\widetilde{u}_0\|_{H^2}\leq M'\;\; \mbox{and}\;\; \|e^{it\Delta^2}(u_0-\widetilde{u}_0)\|_{B(\dot{H}^{s_c}; I)}\leq \varepsilon,\;\;\textnormal{for }\; M'>0.
\end{equation}
Assume also that
\begin{equation}\label{PC33}
\|e\|_{B'(L^2; I)}+\|\nabla e\|_{L^2_IL^{\frac{2N}{N+2}}}+  \|e\|_{B'(\dot{H}^{-s_c}; I)}\leq \varepsilon.
\end{equation}
\indent There exists $\varepsilon_0(M,M')>0$ such that if $\varepsilon<\varepsilon_0$, then there is a unique solution $u$ of \eqref{IBNLS} on $I\times \mathbb{R}^N$, with  $u(0)=u_0$,  satisfying 
\begin{equation}\label{C} 
\|u-\widetilde{u}\|_{B(\dot{H}^{s_c}; I)}\lesssim \varepsilon 
\end{equation}
and
\begin{equation}\label{C1}
\|u\|_{B(L^2; I)}+\|\Delta u\|_{B(L^2; I)}\lesssim c(M,M').
\end{equation}
\end{proposition}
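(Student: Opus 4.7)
Subtracting the equations for $u$ and $\widetilde u$, the difference $w = u - \widetilde u$ must solve the Duhamel equation
\[
w(t) = e^{it\Delta^2}(u_0-\widetilde u_0) + i\lambda\int_0^t e^{i(t-s)\Delta^2}\mathcal N(s)\,ds + i\int_0^t e^{i(t-s)\Delta^2}e(s)\,ds,
\]
where $\mathcal N = |x|^{-b}\bigl(|w+\widetilde u|^\alpha(w+\widetilde u) - |\widetilde u|^\alpha\widetilde u\bigr)$. I would run a contraction on
\[
Y = \Bigl\{w\in C(I;H^2): \|w\|_{B(\dot H^{s_c};I)}\le A\varepsilon,\ \|w\|_{B(L^2;I)}+\|\Delta w\|_{B(L^2;I)}\le B\Bigr\},
\]
with $A$ a large absolute constant and $B = B(M,M')$, endowed with the metric $d(w_1,w_2) = \|w_1-w_2\|_{B(\dot H^{s_c};I)} + \|w_1-w_2\|_{B(L^2;I)}$. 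As a preliminary step, I would apply Strichartz together with Lemmas \ref{lemmaglobal1}--\ref{lemmaglobal3} directly to the Duhamel formula for $\widetilde u$, using the smallness of $\|\widetilde u\|_{B(\dot H^{s_c};I)}\le\varepsilon$ and of $e$, and absorb to obtain $\|\widetilde u\|_{B(L^2;I)}+\|\Delta\widetilde u\|_{B(L^2;I)}\le C(M)$. This last bound, together with $\sup_I\|\widetilde u\|_{H^2_x}\le M$ and the defining bounds on $Y$, will be used to control $u=w+\widetilde u$ via the triangle inequality in every nonlinear estimate.

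\textbf{Critical bound.} From \eqref{FEI}, $|\mathcal N|\lesssim |x|^{-b}(|u|^\alpha+|\widetilde u|^\alpha)|w|$. Applying Lemma \ref{lemmaglobal1}(i) with each of $u$ and $\widetilde u$ as the ``base'' function in the factor $|u|^\alpha$ and with $w$ playing the role of the last factor, I obtain
\[
\|\mathcal N\|_{B'(\dot H^{-s_c};I)} \lesssim C(M,M')\bigl(\|\widetilde u\|_{B(\dot H^{s_c};I)}+\|w\|_{B(\dot H^{s_c};I)}\bigr)^{\alpha-\theta}\|w\|_{B(\dot H^{s_c};I)},
\]
which on $Y$ is bounded by $C(M,M')\varepsilon^{\alpha-\theta}\|w\|_{B(\dot H^{s_c};I)}$. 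Combined with the Strichartz estimates \eqref{SE2}, \eqref{SE5} applied to the integral equation and with the data hypotheses \eqref{PC22}, \eqref{PC33}, this yields $\|\Phi(w)\|_{B(\dot H^{s_c};I)}\le c\varepsilon + C(M,M')\varepsilon^{\alpha-\theta+1}\le A\varepsilon$ for $A$ large and $\varepsilon\le\varepsilon_0(M,M')$.

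\textbf{$L^2$ bounds, contraction and conclusion.} For $\|\Phi(w)\|_{B(L^2;I)}$, Lemma \ref{lemmaglobal1}(ii) produces the same smallness factor multiplying $\|w\|_{B(L^2;I)}+\|\widetilde u\|_{B(L^2;I)}$. For $\|\Delta\Phi(w)\|_{B(L^2;I)}$ I invoke \eqref{EstimativaImportante} to reduce to $\|\nabla\mathcal N\|_{L^2_I L^{2N/(N+2)}_x}$ and expand via \eqref{SECONDEI}. This produces cross terms of the form $|x|^{-b-1}(|u|^\alpha+|\widetilde u|^\alpha)|w|$, $|x|^{-b}|u|^\alpha|\nabla w|$ and (when $\alpha>1$) $|x|^{-b}(|u|^{\alpha-1}+|\widetilde u|^{\alpha-1})|\nabla\widetilde u||w|$, each of which is controlled by re-running the admissibility choices in Lemmas \ref{lemmaglobal2}--\ref{lemmaglobal3} and, for the trilinear term, the variant in Remark \ref{RGP}. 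Choosing $B=B(M,M')$ sufficiently large then gives $\Phi:Y\to Y$. The contraction property is obtained by repeating the same estimates with $\mathcal N(w_1)-\mathcal N(w_2)$, producing a contraction ratio of order $C(M,M')\varepsilon^{\alpha-\theta}<\tfrac12$. The unique fixed point $w$ produces $u=w+\widetilde u$; then \eqref{C} is precisely the $\dot H^{s_c}$-bound on $w$, and \eqref{C1} follows from the $L^2$-level bounds on $w$ combined with the preliminary bound on $\widetilde u$ via the triangle inequality. The main obstacle is the regime $0<\alpha\le 1$ in the gradient estimate, where \eqref{SECONDEI} produces a term $|x|^{-b}|\nabla\widetilde u||w|^\alpha$ that is not directly covered by Remark \ref{RGP}; dealing with it requires redistributing the smallness of $\|w\|_{B(\dot H^{s_c};I)}$ across the $\alpha$-th power of $w$ and repeating, term by term, the dimensional case analysis of Lemmas \ref{lemmaparcial}--\ref{lemmaglobal3}.
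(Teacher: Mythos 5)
Your proposal is correct and takes essentially the same route as the paper: both first bootstrap a bound $\|\widetilde u\|_{B(L^2;I)}+\|\Delta\widetilde u\|_{B(L^2;I)}\lesssim M$ from the smallness of $\|\widetilde u\|_{B(\dot H^{s_c};I)}$, then run a contraction for $w=u-\widetilde u$ on a ball with $\|w\|_{B(\dot H^{s_c};I)}=O(\varepsilon)$ and $\|w\|_{B(L^2;I)}+\|\Delta w\|_{B(L^2;I)}=O(M')$, invoking Lemmas \ref{lemmaglobal1}--\ref{lemmaglobal3}, the gradient estimate \eqref{EstimativaImportante}, and (for $\alpha>1$) Remark \ref{RGP}. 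The obstacle you flag for $0<\alpha\le1$ is real and the paper handles it exactly as you suggest, by placing the full $\theta$-split on the $|w|^{\alpha}$ factor and pairing it with $\|\Delta\widetilde u\|_{B(L^2;I)}$, so the two arguments coincide there too.
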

\begin{proof} 
We will prove the result by assuming that (i) in Theorem \ref{GWPH2} holds. The other cases are dealt with similarly.  Without loss of generality, we may assume that $0=\inf I$.

We start with the following claim: 

\noindent {\bf Claim}: If $\|\widetilde{u}\|_{B(\dot{H}^{s_c};I)}\leq \varepsilon_0$, for some $\varepsilon_0>0$ enough small, then $\|\widetilde{u}\|_{B(L^2;I)} +|\Delta\widetilde{u}\|_{B(L^2;I)}\lesssim M.$
 Indeed, we will show that 
\begin{equation}\label{widetilde{u}}
\|\Delta \widetilde{u}\|_{B(L^2;I)}\lesssim M.
\end{equation}
Similar estimates also imply $\|\widetilde{u}\|_{B(L^2;I)}\lesssim M$. Since $\widetilde{u}$ satisfies an integral equation similar to that in \eqref{OPERATOR}, we have from Proposition \ref{estimativanaolinear},
$$
\|\Delta \widetilde{u}\|_{B(L^2;I)}\lesssim  \|\Delta \widetilde{u}_0\|_{L^2}+ \left\|\nabla(|x|^{-b}|\widetilde{u}|^\alpha\widetilde{u})\right\|_{L^2_IL_x^{\frac{2N}{N+2}}}+\|\nabla e\|_{L^2_IL_x^{\frac{2N}{N+2}}}.
$$
Furthermore, Lemma \ref{lemmaglobal2}, \eqref{PC11}, and \eqref{PC33} imply
\[
\begin{split}
	\|\Delta\widetilde{u}\|_{B(L^2;I)}&\lesssim M+ \| \widetilde{u} \|^\theta_{L^\infty_tH^2_x}\| \widetilde{u} \|^{\alpha-\theta}_{B(\dot{H}^{s_c};I)}  \|\Delta \widetilde{u} \|_{B(L^2;I)}  +\varepsilon\\
	&\lesssim M+\varepsilon+ M^\theta \varepsilon_0^{\alpha-\theta}\|\Delta \widetilde{u} \|_{B(L^2;I)},
\end{split}
\]
which immediately implies \eqref{widetilde{u}} if $\varepsilon_0$ is sufficiently small.

The solution $u$ will be obtained as    $u=\widetilde{u}+w$, where $w$ is the solution of the following   IVP 	
\begin{equation}\label{IVPP} 
\begin{cases}
i\partial_tw +\Delta^2 w + H(x,\widetilde{u},w)+e= 0,&  \\
w(0,x)= u_0(x)-\widetilde{u}_0(x),& 
\end{cases}
\end{equation}
with $H(x,\widetilde{u},w)=\lambda|x|^{-b} \left(|\widetilde{u}+w|^\alpha (\widetilde{u}+w)-|\widetilde{u}|^\alpha \widetilde{u}\right)$.
It is then suffices to show that \eqref{IVPP} indeed has a solution defined on $I\times \mathbb{R}^N$. To do so, we also use the contraction mapping principle combined with the estimates established in Section \ref{secglo}.
Consider  the map
\begin{equation}\label{IEP} 
G (w)(t):=e^{it\Delta^2}w_0+i  \int_0^t e^{i(t-s)\Delta^2}(H(x,\widetilde{u},w)+e)(s)ds
\end{equation}
and define
$$
B_{\rho,K}=\{ w\in C(I;H^2(\mathbb{R}^N)):\;\|w\|_{B(\dot{H}^{s_c};I)}\leq \rho\;\textnormal{and}\;\|w\|_{B(L^2;I)}+\|\Delta w\|_{B(L^2;I)}\leq K    \},
$$
where $\rho>0$ and $K>0$ will be chosen later.
From Lemma \ref{Lemma-Str} and Proposition \ref{estimativanaolinear}, one has 
\begin{equation}\label{SP1}
\begin{split}
\|G(w)\|_{B(\dot{H}^{s_c};I)}&\lesssim \|e^{it\Delta^2}w_0\|_{B(\dot{H}^{s_c};I)}+ \|\chi_{B} H \|_{B'(\dot{H}^{-s_c};I)}+\|\chi_{B^C} H \|_{B'(\dot{H}^{-s_c};I)}+\|e \|_{B'(\dot{H}^{-s_c};I)}\\
&\lesssim \|e^{it\Delta^2}w_0\|_{B(\dot{H}^{s_c};I)}+ \|\chi_{B} H \|_{L_I^{\widetilde{a}'}L_x^{\widehat{r}'}}+\|\chi_{B^C} H \|_{L_I^{\widetilde{a}'}L_x^{\widehat{r}'}}+\|e \|_{B'(\dot{H}^{-s_c};I)}
\end{split}
\end{equation}
\begin{equation}\label{SP2}
\begin{split}
\|G(w)\|_{B(L^2;I)}&\lesssim \|w_0\|_{L^2}+ \|\chi_{B} H \|_{B'(L^2;I)}+ \|\chi_{B^C} H \|_{B'(L^2;I)}+\|e\|_{B'(L^2;I)}\\
&\lesssim \|w_0\|_{L^2}+ \|\chi_{B} H \|_{L_I^{\widehat{q}'}L_x^{\widehat{r}'}}+ \|\chi_{B^C} H \|_{L_I^{\widehat{q}'}L_x^{\widehat{r}'}}+\|e\|_{B'(L^2;I)}
\end{split}
\end{equation}
and	
\begin{equation}\label{SP3}
\|\Delta G(w)\|_{B(L^2;I)}\lesssim  \|\Delta w_0\|_{L^2}+ \| \nabla H\|_{L^2_IL_x^{\frac{2N}{N+2}}}+\|\nabla e\|_{L^2_IL_x^{\frac{2N}{N+2}}},
\end{equation}	
where the pairs $(\widehat{q},\widehat{r})$ and $(\widetilde{q},\widehat{r})$ were defined in the proof of Lemma \ref{lemmaglobal1}.
Now, from the mean value theorem (see \eqref{FEI}), we easily see that
\begin{equation}\label{EI} 
\left| |\widetilde{u}+w|^\alpha(\widetilde{u}+w)-|\widetilde{u}|^\alpha\widetilde{u} \right|\lesssim |\widetilde{u}|^\alpha|w|+|w|^{\alpha+1}.
\end{equation}
 Lemma \ref{lemmaglobal1} combined with \eqref{EI} give
\begin{align}
\|\chi_{B} H \|_{L_I^{\widetilde{a}'}L_x^{\widehat{r}'}}+\|\chi_{B^C} H \|_{L_I^{\widetilde{a}'}L_x^{\widehat{r}'}}\lesssim  \left(\| \widetilde{u} \|^\theta_{L^\infty_tH^2_x}\| \widetilde{u} \|^{\alpha-\theta}_{B(\dot{H}^{s_c};I)}+ \| w \|^\theta_{L^\infty_tH^2_x} \| w\|^{\alpha-\theta}_{B(\dot{H}^{s_c};I)}    \right) \| w \|_{B(\dot{H}^{s_c};I)}
\end{align}
and
\begin{align}
 \|\chi_{B} H \|_{L_I^{\widehat{q}'}L_x^{\widehat{r}'}}+ \|\chi_{B^C} H \|_{L_I^{\widehat{q}'}L_x^{\widehat{r}'}}\lesssim \left(\| \widetilde{u} \|^\theta_{L^\infty_tH^2_x}\| \widetilde{u} \|^{\alpha-\theta}_{B(\dot{H}^{s_c};I)} + \| w \|^\theta_{L^\infty_tH^2_x}\| w\|^{\alpha-\theta}_{B(\dot{H}^{s_c};I)} \right)\| w \|_{B(L^2;I)}.
\end{align}
Let us now estimate $\|\nabla H\|_{L^2_IL_x^{\frac{2N}{N+2}}}$. From \eqref{SECONDEI} we deduce
\begin{equation*} 
|\nabla H(x,\widetilde{u},w)| \lesssim |x|^{-b-1}(|\widetilde{u}|^{\alpha}+|w|^{\alpha})|w|+|x|^{-b}(|\widetilde{u}|^\alpha+|w|^\alpha) |\nabla w| +E,
 \end{equation*}
 where
\begin{eqnarray*} 
 E &\lesssim& \left\{\begin{array}{cl}
 |x|^{-b}\left(|\widetilde{u}|^{\alpha-1}+|w|^{\alpha-1}\right)|w||\nabla \widetilde{u}|, & \textnormal{if}\;\;\;\alpha> 1 \vspace{0.2cm} \\ 
|x|^{-b}|\nabla \widetilde{u}||w|^{\alpha}, & \textnormal{if}\;\;\;\alpha\leq 1.
\end{array}\right.
\end{eqnarray*}
A consequence of Lemma  \ref{lemmaglobal2} is that
$$
\left\| |x|^{-b-1}|u|^\alpha v  \right\|_{B'(L^2)}\lesssim  \| u\|^{\theta}_{L^\infty_tH^2_x}\|u\|^{\alpha-\theta}_{B(\dot{H}^{s_c})}  \|\Delta v\|_{B(L^2)}.
$$
Therefore,
$$
\|\nabla H\|_{L^2_IL_x^{\frac{2N}{N-2}}} \lesssim \left(\| \widetilde{u} \|^\theta_{L^\infty_tH^2_x}\| \widetilde{u} \|^{\alpha-\theta}_{B(\dot{H}^{s_c};I)} + \| w \|^\theta_{L^\infty_tH^2_x}\| w\|^{\alpha-\theta}_{B(\dot{H}^{s_c};I)} \right)\|\Delta w \|_{B(L^2;I)}+E_1,
$$
where (using Remark \ref{RGP})
\begin{align*}
E_1\lesssim & \left\{\begin{array}{cl}
\left(\| \widetilde{u} \|^\theta_{L^\infty_tH^2_x} \| \widetilde{u} \|^{\alpha-1-\theta}_{B(\dot{H}^{s_c};I)} + \| w \|^\theta_{L^\infty_tH^2_x} \| w \|^{\alpha-1-\theta}_{B(\dot{H}^{s_c};I)} \right) \| w \|_{B(\dot{H}^{s_c};I)}  \|\Delta \widetilde{u} \|_{B(L^2;I)},&\alpha> 1 \vspace{0.2cm} \\
\| w \|^\theta_{L^\infty_tH^2_x}\| w\|^{\alpha-\theta}_{B(\dot{B}^{s_c};I)} \|\Delta \widetilde{u} \|_{B(L^2;I)}\;,\;\;\;\;\;\;\;\;\;\;\alpha\leq 1.
\end{array}\right.
\end{align*}
 
Gathering together the above estimates with our assumptions, we get for any $w\in B_{\rho,K}$,
\begin{equation}\label{SP7}
\|\chi_{B} H \|_{L_I^{\widetilde{a}'}L_x^{\widehat{r}'}}+\|\chi_{B^C} H \|_{L_I^{\widetilde{a}'}L_x^{\widehat{r}'}}\lesssim \left(M^\theta\varepsilon^{\alpha-\theta}+K^\theta \rho^{\alpha-\theta}\right)\rho,
\end{equation}
\begin{equation}\label{SP8}
 \|\chi_{B} H \|_{L_I^{\widehat{q}'}L_x^{\widehat{r}'}}+ \|\chi_{B^C} H \|_{L_I^{\widehat{q}'}L_x^{\widehat{r}'}}\lesssim \left(M^\theta\varepsilon^{\alpha-\theta}+K^\theta \rho^{\alpha-\theta}\right)K,
\end{equation}
and
\begin{align}\label{SP9}
 \|\nabla H\|_{L^2_IL_x^{\frac{2N}{N+2}}} \lesssim \left(M^\theta\varepsilon^{\alpha-\theta}+K^\theta \rho^{\alpha-\theta}\right)K +E_1,
\end{align}
where
\begin{eqnarray*}
E_1&\lesssim & \left\{\begin{array}{cl}
\left( M^\theta \varepsilon^{\alpha-1-\theta} + K^\theta \rho^{\alpha-1-\theta} \right) \rho M,&\textnormal{if}\;\;\alpha > 1, \vspace{0.2cm} \\
K^\theta \rho^{\alpha-\theta} M,\;\;\;\;\;\; \textnormal{if}\;\;\;\;\;\alpha\leq 1.
\end{array}\right.
\end{eqnarray*}
Hence, it follows from \eqref{SP1}-\eqref{SP3} and assumptions \eqref{PC22}-\eqref{PC33} that 
$$
\|G(w)\|_{B(\dot{H}^{s_c};I)}\leq  c\varepsilon+ cK_1\rho,
$$
$$
\|G(w)\|_{B(L^2;I)}\leq cM'+c\varepsilon +cK_1K,
 $$
 \begin{equation*}
 \|\Delta G(w)\|_{B(L^2;I)}\leq cM'+c\varepsilon +cK_1K+cK_2 \rho M, \qquad \mbox{if}\;\;\alpha> 1,
 \end{equation*}
 and
 \begin{equation*}
 \|\Delta G(w)\|_{B(L^2;I)}\leq cM'+c\varepsilon + cK_1K + K^\theta \rho^{\alpha-\theta} M, \qquad \mbox{if}\;\;\alpha\leq 1,
 \end{equation*}
 where
 $K_1=M^\theta\varepsilon^{\alpha-\theta}+K^\theta \rho^{\alpha-\theta}$ and $K_2= M^\theta \varepsilon^{\alpha-1-\theta} + K^\theta \rho^{\alpha-1-\theta}$.
By choosing $\rho=2c\varepsilon$, $K=3cM'$ and $\varepsilon_0$ sufficiently small such that 
$$
cK_1<\frac{1}{3}\;\;\;\;\textnormal{and}\;\;\;c(\varepsilon+K_2\rho M+K^\theta \rho^{\alpha-\theta} M)<\frac{K}{3},
$$ 
we have
\begin{equation*}
\|G(w)\|_{B(\dot{H}^{s_c};I)}\leq \rho\;\;\;\textnormal{and}\;\;\;\|G(w)\|_{B(L^2;I)}+\|\Delta G(w)\|_{B(L^2;I)}\leq K.
\end{equation*}
Therefore, $G$ is well defined and maps $B_{\rho,K}$ into itself. By using a similar argument we can also show that $G$ is a contraction. Thus, from the contraction mapping principle we obtain a unique solution $w$ on $I\times \mathbb{R}^N$ such that 
$$
\|w\|_{B(\dot{H}^{s_c};I)}\lesssim \varepsilon \;\;\;\textnormal{and}\;\;\;\|w\|_{B(L^2;I)}+\|\Delta w\|_{B(L^2;I)} \lesssim c(M,M'),
$$ 
which it turn implies \eqref{C} and \eqref{C1}. This completes the proof of the proposition.
\end{proof}

\begin{remark}\label{RSP} 
From \eqref{SP7}-\eqref{SP8}, we also obtain the following estimates:
\begin{equation}\label{RSP1}
\|\chi_{B}H(x,\widetilde{u},w)\|_{B'(\dot{H}^{-s_c}; I)}+\|\chi_{B^C}H(\cdot,\widetilde{u},w)\|_{B'(\dot{H}^{-s_c}; I)}\leq c(M,M') \varepsilon
\end{equation}
and
\begin{equation}\label{RSP2}
\|\chi_{B}H(x,\widetilde{u},w)\|_{B'(L^2; I)}+\|\chi_{B^C}H(x,\widetilde{u},w)\|_{B'(L^2; I)}+\|\nabla H(\cdot,\widetilde{u},w)\|_{L^2_IL_x^{\frac{2N}{N-2}}}\leq c(M,M')\varepsilon^{\alpha-\theta}.
\end{equation}
\end{remark}

\ Next, in view of the previous proposition we are able to show Theorem \ref{LTP}. The idea is to  iterate the short-time perturbation result.

\begin{proof}[\bf {Proof of Theorem \ref{LTP}}] The proof is similar to that in \cite[Proposition 4.9]{paper2}; so we give only the main steps. As before, we can assume $0=\inf I$.  Since $\|\widetilde{u}\|_{B(\dot{H}^{s_c}; I)}\leq L$,  we may take a partition of $I$ into $n = n(L,\varepsilon)$ intervals $I_j = [t_j ,t_{j+1}]$ such that $\|\widetilde{u}\|_{B(\dot{H}^{s_c};I_j)}\leq \varepsilon$, where  $\varepsilon<\varepsilon_0(M,2M')$ and $\varepsilon_0$ is given in Proposition \ref{STP}.
Since, on $I_j$,
  \begin{equation*}
   w(t)=e^{i(t-t_j)\Delta^2}w(t_j)+i\int_{t_j}^{t}e^{i(t-s)\Delta^2}(H(x,\widetilde{u},w)+e)(s)ds,
  \end{equation*}
solves the equation in \eqref{IVPP} with initial data $w(t_j)=u(t_j)-\widetilde{u}(t_j)$, by choosing $\varepsilon_1=\varepsilon_1(n,M,M')$ sufficiently small we may reiterate Proposition \ref{STP} to obtain, for each $0\leq j<n$ and $\varepsilon<\varepsilon_1$, 
\begin{equation}\label{LP1}
\|u-\widetilde{u}\|_{B(\dot{H}^{s_c};I_j)}\leq c(M,M',j)\varepsilon
\end{equation}
and
\begin{equation}\label{LP2}
\|w\|_{B(\dot{H}^{s_c};I_j)}+\|w\|_{B'(L^2;I_j)}+\|\Delta w\|_{B'(L^2;I_j)}\leq c(M,M',j),
\end{equation}
provided that (for each $0\leq j<n$)
\begin{equation}\label{LP3}
 \|e^{i(t-t_j)\Delta^2}(u(t_j)-\widetilde{u}(t_j))\|_{B(\dot{H}^{s_c};I_j)}\leq c(M,M',j)\varepsilon\leq \varepsilon_0
 \end{equation}
 and
 \begin{equation}\label{LP4}
 \|u(t_j)-\widetilde{u}(t_j)\|_{H^2_x}\leq 2M'.
 \end{equation}
By summing \eqref{LP1} and \eqref{LP2} over all subintervals $I_j$, we get the desired. 
 
It remains to establish \eqref{LP3} and \eqref{LP4}.  But from Lemma \ref{Lemma-Str},
\[
\begin{split}
 \|e^{i(t-t_j)\Delta^2}w(t_j)\|_{B(\dot{H}^{s_c};I_j)}&\lesssim \|e^{it\Delta^2}w_0\|_{B(\dot{H}^{s_c}; I)}+\|\chi_{B}H(x,\widetilde{u},w)\|_{B'(\dot{H}^{-s_c};[0,t_j])}\\
  &\quad +\|\chi_{B^C}H(x,\widetilde{u},w)\|_{B'(\dot{H}^{-s_c};[0,t_j])}+\|e\|_{B'(\dot{H}^{-s_c};I)},
  \end{split}
  \]
which  by \eqref{RSP1} and an inductive argument  yield
$$
\|e^{i(t-t_j)\Delta^2}(u(t_j)-\widetilde{u}(t_j))\|_{B(\dot{H}^{s_c}; I_j)}\lesssim \varepsilon+\sum_{k=0}^{j-1}c(M,M',k)\varepsilon.
$$
  
By a similar argument but now using  \eqref{RSP2} we get
\[
 \begin{split}
 \|u(t_j)-\widetilde{u}(t_j)\|_{H^2_x}&\lesssim  \|u_0-\widetilde{u}_0\|_{H^2}+\|e\|_{B'(L^2; I)}+\|\nabla e\|_{L^2_IL^\frac{2N}{N+2}_x}\\
 &\quad+\| H(x,\widetilde{u},w)\|_{B'(L^2;[0,t_j])}+\|\nabla H(x,\widetilde{u},w)\|_{B'(L^2;[0,t_j])}\\
 &\lesssim M'+\varepsilon+\sum_{k=0}^{j-1}C(k,M,M')\varepsilon^{\alpha-\theta}.
 \end{split}
 \]
 Taking $\varepsilon_1$ sufficiently small, we see that \eqref{LP3} and \eqref{LP4} hold. This completes the proof of the theorem.
 \end{proof} 

\section*{Acknowledgments} 
Most part of this work was done when the first author had a postdoctoral position at Universidade Federal de Minas Gerais (UFMG). C.M.G. was supported by CAPES/Brazil. A.P. was partially supported by CNPq/Brazil grants 402849/2016-7 and 303098/2016-3 and FAPESP/Brazil grant 2019/02512-5.

\bibliographystyle{abbrv}
\bibliography{bibguzman}	

\end{document}